\theoremstyle{plain} 
\newtheorem{theorem}{Theorem}[section]
\newtheorem{THEO}[theorem]{Theorem}
\newtheorem{proposition}[theorem]{Proposition}
\newtheorem{prop}[theorem]{Proposition}
\newtheorem{PROPO}[theorem]{Proposition}
\newtheorem{lemma}[theorem]{Lemma}
\newtheorem{corollary}[theorem]{Corollary}
\newtheorem{problem}[theorem]{Problem}
\theoremstyle{definition}
\newtheorem{definition}[theorem]{Definition}
\newtheorem{remark}[theorem]{\sc Remark}
\newtheorem{rem}[theorem]{\sc Remark}
\newtheorem{example}[theorem]{\sc Example}
\renewcommand{\Im}{\mathop\mathrm{Im}\nolimits}
 \DeclareMathOperator{\Spec}{Spec}
 \DeclareMathOperator{\rank}{rk}
 \DeclareMathOperator{\gr}{gr}
\newcommand{\mf}{\mathfrak}
\newcommand{\K} {\mathbf{k}}
\newcommand {\CC}{\mathscr{C}}
\newcommand {\C}{\CC}
\newcommand{\cA}{{\mathscr{A}}}
\newcommand{\cB}{\mathscr{B}}
\newcommand{\cK} {\mathscr {K}}
\newcommand{\cG} {\mathscr{G}r}
\newcommand{\eps}{\epsilon}
\newcommand{\ve}{\varepsilon }
\newcommand{\HS}{Hilbert\ }
\newcommand{\hs}{\mathscr{H}}
\newcommand{\hsb}{\overline{\hs}}
\newcommand{\md}{\mathrm{md}}
\newcommand{\ds}{\displaystyle}
\begin{document}
\numberwithin{equation}{section}

\title[Deformed graphical algebras] {Deformed graphical zonotopal algebras}

\author[B.~Shapiro]{Boris Shapiro}
\address{Department of Mathematics, Stockholm University, SE-106 91 Stockholm,  Sweden}
\email{shapiro@math.su.se}

\author[I.~Smirnov]{Ilya Smirnov}
\address{BCAM -- Basque Center for Applied Mathematics, Mazarredo 14, 48009 Bilbao, Spain \quad and \quad IKERBASQUE, Basque Foundation for Science, Plaza Euskadi 5, 48009 Bilbao, Spain}
\email{ismirnov@bcamath.org}

\author[A.~Vaintrob]{Arkady Vaintrob}
\address{Department of Mathematics, University of Oregon, Eugene, OR 97403,
 USA}
\email{vaintrob@uoregon.edu }

\subjclass[2020]{13A70, 05C25, 14D99}
\keywords{Zonotopal algebras, filtration, Hilbert sequence, Hilbert stratification}

\begin{abstract}
We study certain filtered deformations of the external zonotopal
algebra of a given graph parametrized by univariate polynomials.
We establish some general properties of these algebras, compute their
Hilbert series for a number of graphs using Macaulay2, and formulate several
conjectures.
\end{abstract}

 \maketitle

\section{Introduction}

\noindent
Let $G$ be a finite  undirected graph. Wagner~\cite{Wa1} and,  independently,
Postnikov and the first author~\cite{PS} introduced a commutative graded
algebra $\CC_G$ whose dimension is equal to the number of spanning
forests of $G$. They also showed that the Hilbert series of $\CC_G$
is a specialization of the Tutte polynomial of $G$ which  enumerates
the spanning forests of $G$ according to
their external activity.
Wagner's initial goal was to construct new algebraic invariants of graphs.
Postnikov and the first author were motivated by the earlier work~\cite{SS,PSS},
where it was shown that for the complete graph $G$, the algebra $\CC_G$ is
isomorphic to the algebra generated by the curvature forms of tautological
Hermitian line bundles on the complete flag manifold. Soon it turned out that these
algebras are connected to several other areas, such as the theory of power
ideals, box splines, enumeration of lattice points, chip firing, etc. They have
been studied under various names: circulation algebras, Postnikov-Shapiro
algebras, forest-counting algebras, and (external) zonotopal algebras. We will
use the latter term reflecting their connection with enumeration of lattice
points in zonotopes (see e.g.~\cite{HR}).

Wagner~\cite{Wa2} and Nenashev~\cite{N1} proved that the algebra $\CC_G$
determines the graphical matroid of $G$. However, non-isomorphic graphs can have
isomorphic algebras. In~\cite{NSh}, Nenashev and the first author introduced a
filtered algebra $\cK_G$ which they called a K-theoretic analogue of $\CC_G$.
They showed that $\cK_G$ and $\CC_G$ are isomorphic as (non-filtered) algebras,
but, unlike $\CC_G$, the filtered algebra $\cK_G$ is a complete invariant of $G$.

The algebra $\cK_G$ is a deformation of $\CC_G$ in the class of filtered
algebras. It is a member of a larger family of filtered deformations $\CC^f_G$
of $\CC_G$ parametrized by polynomials  $f\in \K[u]$ which was introduced
in~\cite{NSh}. In the current work, we begin to study this family of algebras
trying to understand their relationship to each other and to the graph $G$.

We begin  Section~\ref{sec:prelim} with a review of the definition and
properties of the graded algebra $\CC_G$. Then in Section~\ref{sec:alg} we turn
to the deformed algebras  $\CC^f_G$ and establish some general facts about them.
In particular, we prove that under some mild
nondegeneracy assumption on $f$, the algebra  $\CC^f_G$ is isomorphic
to $\CC_G$ as an unfiltered algebra.
We introduce a natural stratification of the space of such algebras
for a given graph $G$.
Section \ref{sec:prel} discusses several properties of deformed
zonotopal algebras and  the latter stratification.
Unfortunately,  unlike the graded case, we cannot  explicitly find the
Hilbert  sequences of these  algebras. In Section~\ref{sec:exp} we
collect  our computations of the Hilbert sequences for several
examples performed in Macaulay 2.
Finally, in Section~\ref{sec:out} we present a number of conjectures
and questions for further study.%

\subsection*{Acknowledgments}
The first author was supported by the grant 2021-04900 of the Swedish
Research Council.
The second author was supported by a fellowship from ``la Caixa''
Foundation (ID 100010434), fellowship code LCF/BQ/PI21/11830033, and from the European Union’s Horizon 2020
research and innovation programme under the Marie
Skłodowska-Curie grant agreement No 847648.

\section{Preliminaries} \label{sec:prelim}

\subsection{Graded zonotopal algebras}
\label{sec:graded-algebras}

By a \emph{graph} we understand a finite undirected multigraph $G=(V,E)$,
possibly with loops, with a vertex set $V$ and a (multi)set of edges $E$.

Graphs form a category $\cG$ with morphisms $G=(V,E) \to G'=(V',E')$  defined as
maps of pairs
\begin{equation}
  \label{eq:categ}
(\gamma,\lambda) \colon (V,E)\to (V',E'),
\end{equation}
\emph{injective on edges} and preserving incidences between vertices and edges
(i.e.\ if $e\in E$ is an edge in $G$ connecting $u$ and $v$, then $\lambda(e)$
is an edge  in $G'$ connecting $\gamma(u)$ and $\gamma(v)$).
\smallskip

There are two special kinds of graph morphisms,
\emph{edge deletions} $j_e\colon G-e\to G$
and \emph{edge contractions} $\pi_e\colon G\to G_e$, where $e\in E$ is an edge of $G$,
$G-e$ is the subgraph of $G$ obtained by removing $e$ from $E$, and $G_e$ is
the graph obtained from $G$ by identifying the endpoints $u$ and $v$ of $e$ (and
thus creating a loop for each edge connecting $u$ and $v$).
\smallskip

All algebras in this paper are commutative unital algebras over
a fixed field $\K$ of characteristic $0$.

\begin{definition}
 Given a graph $G=(V,E)$, its \emph{edge algebra} $\Phi_G$ is the
 quotient of the polynomial algebra in \emph{edge variables}
 $\phi_e, e\in E$, by their squares,
    \begin{equation}
    \label{eq:sq-free}
    \Phi_G:= \K[\phi_e: e\in E]/(\phi_e^2).
  \end{equation}
\end{definition}
The edge algebra is a local algebra of dimension $2^{|E|}$ isomorphic to the
tensor product of $|E|$ copies of the algebra of dual numbers
$\K[\ve]/(\ve^2)$, namely
$ \ds\Phi_G\simeq\bigotimes_{e\in E} \big(\K[\phi_e]/(\phi_e^2)\big).$
It inherits a standard grading from the polynomial algebra. A basis of the $k$th
graded component is given by square-free words of length $k$ in edge variables
$\phi_e$.

Observe that this construction is functorial. Every graph morphism~\eqref{eq:categ} induces a
natural homomorphism
of algebras $\lambda^*\colon \Phi_{G'}\to \Phi_{G}$, defined on generators
$\phi_{e'}\in \Phi_{G'}$
as
\begin{equation}
  \label{eq:functor}
\lambda^*(\phi_{e'}):=\sum_{e\in \lambda^{-1}(e')}\phi_e,
\end{equation}
i.e., $\lambda^*(\phi_{\lambda(e)})=\phi_e$ and
$\lambda^*(\phi_{e'})= 0$, if $e'\in E'$ is not in the image of $\lambda$.

\begin{definition}
Let $G=(V,E)$ be a graph with a linear order $<$ on its vertex set $V$.
The \emph{zonotopal algebra} of $G$ is the subalgebra $\CC_G$ of the edge algebra $\Phi_G$ generated by the  elements:
\begin{equation}\label{eq:gens}
X_v =\sum_{e\in G} c_{v,e} \phi_e,\quad v\in V,
\end{equation}
 called \emph{vertex flows} where
\begin{equation}\label{eq:def}
  c_{v,e}=\begin{cases} \;\;\;1\quad \text{if}\;
           e=\{v,u\}, \ v<u,\\   -1\quad\text{if}\; e=\{v,u\}, \ v>u,\\
                     \;\;\;0\quad \text{otherwise}.
\end{cases}
\end{equation}
\end{definition}

\begin{remark}
Even though the coefficients $c_{v,e}$ in~\eqref{eq:def}, and thus the
elements~\eqref{eq:gens}, depend on the chosen ordering $<$ of the vertex set $V$, the
subalgebras obtained from two different orderings will be identified under a
graded automorphism of the algebra $\Phi_G$ given by changing signs of some
generators $\phi_e$.
It is clear that loop edges of $G$ do not contribute to the flow
generators $X_v$. For this reason, in earlier papers~\cite{PS,N1,NSh} on this
topic, graphs with loops were not considered.
In this work we allow loops because they naturally appear when we consider
graph homomorphisms involving edge contractions.
 \end{remark}

One of the important properties of algebra $\CC_G$ is that it is
functorial with respect to graph
morphisms~\eqref{eq:categ}.
\begin{prop}
For  every graph morphism $(\gamma,\lambda)\colon G\to G'$ is , the homomorphism
$\lambda^*$~\eqref{eq:functor} sends the  subalgebra $\CC_{G'}\subset \Phi_{G'}$
to $\CC_G\subset \Phi_G$.
\end{prop}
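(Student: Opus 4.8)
The plan is to exploit that $\lambda^*$ is an algebra homomorphism and that $\CC_{G'}$ is, by definition, generated by the vertex flows $X_{v'}$, $v'\in V'$. Then $\lambda^*(\CC_{G'})$ is the subalgebra of $\Phi_G$ generated by the elements $\lambda^*(X_{v'})$, so it suffices to prove that each $\lambda^*(X_{v'})$ lies in $\CC_G$. Since $\CC_G$ is generated in degree one, with degree-one component exactly the span of the flows $X_v$, the whole problem collapses to a single degree-one computation.

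First I would compute $\lambda^*(X_{v'})$ explicitly. Substituting \eqref{eq:gens} and applying the defining formula \eqref{eq:functor} gives
\[
\lambda^*(X_{v'})=\sum_{e'\in E'}c_{v',e'}\,\lambda^*(\phi_{e'})=\sum_{e'\in E'}c_{v',e'}\!\!\sum_{e\in\lambda^{-1}(e')}\!\!\phi_e=\sum_{e\in E}c_{v',\lambda(e)}\,\phi_e .
\]
The natural guess is that this equals $\sum_{v\in\gamma^{-1}(v')}X_v$, which manifestly lies in $\CC_G$. Comparing coefficients of $\phi_e$ for an edge $e=\{a,b\}$ reduces the claim to the identity $c_{v',\lambda(e)}=\sum_{v\in\gamma^{-1}(v')}c_{v,e}$. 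As $\lambda(e)$ joins $\gamma(a)$ and $\gamma(b)$, both sides vanish unless $v'\in\{\gamma(a),\gamma(b)\}$, and when $\gamma(a)=\gamma(b)$ (so $\lambda(e)$ is a loop) both sides are zero by \eqref{eq:def}; thus the only real content is in the sign.

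The sign is the crux, and I expect it to be the main obstacle. The coefficient $c_{v',\lambda(e)}$ records whether $v'$ is the smaller or the larger endpoint of $\lambda(e)$ in the order on $V'$, whereas $\sum_{v\in\gamma^{-1}(v')}c_{v,e}$ records the same datum for the order on $V$; these agree for every edge precisely when the two linear orders are $\gamma$-compatible, i.e.\ $a<b$ implies $\gamma(a)\le\gamma(b)$. Since $\gamma$ need not be monotone, for incompatible orders the two elements differ by a sign on each $\phi_e$, and the bare inclusion can fail literally. This is exactly what the remark above reconciles: the flow subalgebra attached to a graph is canonical only up to a sign-change graded automorphism of its edge algebra induced by a change of the vertex order. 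I would therefore choose the order on $V$ to refine the pullback under $\gamma$ of the fixed order on $V'$ — always possible by sorting the vertices of $V$ according to the value of $\gamma$ and breaking ties arbitrarily — so that every edge is orientation-preserving and all the signs equal $+1$. For this compatible choice the identity $\lambda^*(X_{v'})=\sum_{v\in\gamma^{-1}(v')}X_v$ holds on the nose, exhibiting $\lambda^*(X_{v'})\in\CC_G$ and hence $\lambda^*(\CC_{G'})\subseteq\CC_G$; by the remark this is precisely the content of the statement, the subalgebras $\CC_{G'}$ and $\CC_G$ being defined only up to the sign-change automorphisms in question.
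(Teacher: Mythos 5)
Your proposal is correct and follows essentially the same route as the paper: reduce to the generators, compute $\lambda^*(X_{v'})=\sum_{v\in\gamma^{-1}(v')}X_v$ using injectivity of $\lambda$ on edges, and note that the vertex orders on $V$ and $V'$ must be chosen $\gamma$-compatibly for the signs to match. The paper states the identity without the coefficient-by-coefficient verification and defers the ordering issue to the remark immediately following the proof; your explicit treatment of the sign as the only real content is a faithful elaboration of that same argument.
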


\begin{proof}
All we need to show is that for every vertex $v'\in V'$,
  the image of the generator $X_{v'}$ of $\CC_{G'}$ under the  homomorphism
  $\lambda^*$ belongs to $\CC_{G}\subset \Phi_G$.
  Indeed, if $v'\not \in \gamma(V) $ then $\lambda(E)$ contains no
  edges incident to $v'$, and so $\lambda^*(X_{v'})=0$. If
  $v'\in \gamma(V)$, then, because of the injectivity of $\lambda$, we
  have $\ds \lambda^*(X_{v'})=\sum_{v\in\gamma^{-1}(v')}X_v\in \CC_G$.
\end{proof}

\begin{remark}
  (1) The definition of $\CC_G$ involves choosing a linear order $<$ on $V$. In
  the above proof we assume that the orders $<$ and $<^\prime$ on $V$ and $V'$ resp. are compatible,
  i.e.\ they are chosen in such a way that the map $\gamma\colon V\to V'$ is monotone.
  \\[3pt]
  (2) Since the loop edges of $G$ do not contribute to the generators $X_v$ of
  the algebra $\CC_G$, we can remove loops from $G$ without affecting $\CC_G$.
  In particular,  let $G/e$ be the graph obtained from $G$ by contracting an edge
  $e\in E$ by removing $e$ (without creating a new loop) and identifying the
  endpoints $u$ and $v$ of $e$. Then the contracting morphism
  $\pi_e\colon G\to G_e$ gives an injective homomorphism
  \begin{equation}
  \pi_e\colon \CC_{G/e}=\CC_{G_e}\to \CC_G
      \end{equation}
  which sends the generator $X_w$ corresponding to the new vertex $w=\{u,v\}$ to
  $X_v+X_u\in \CC_G$.
\end{remark}

As was proved in~\cite {Wa2, PSS}, the algebra $\CC_G$ is a power algebra,
i.e.\ it is isomorphic to a quotient of the polynomial algebra by an ideal
generated by powers of linear forms.
Namely, for a subset $I\subset V$ of vertices denote by $D_I$ the number
of edges $e\in E$ connecting a vertex from $I$ with one in the complementary
subset $I^c=V-I$.
\begin{theorem}[\cite{Wa2,PSS}]
  \label{thm:power}
The zonotopal algebra $\CC_G$ is isomorphic to the quotient of the polynomial
algebra $\K[x_v\, : \, v\in V]$ by the ideal $(p_I)_{I\subseteq V } $
generated by the polynomials
  \begin{equation}
    \label{eq:relF}
    p_I=\Bigl( \sum_{v\in I}x_v\Bigr)^{D_I+1}.
  \end{equation}
\end{theorem}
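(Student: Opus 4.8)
The plan is to realize the asserted isomorphism as the canonical quotient map. Consider the polynomial algebra $S=\K[x_v : v\in V]$ and the algebra homomorphism $\psi\colon S\to \Phi_G$ determined by $x_v\mapsto X_v$, where $X_v$ is the vertex flow \eqref{eq:gens}. By construction $\CC_G$ is generated by the $X_v$, so $\psi$ maps $S$ onto $\CC_G$, and it suffices to prove that $\ker\psi$ coincides with the power ideal $\mathcal I:=(p_I)_{I\subseteq V}$ of \eqref{eq:relF}. I would establish the two inclusions separately: the elementary inclusion $\mathcal I\subseteq\ker\psi$ first, which produces a surjection $\bar\psi\colon S/\mathcal I\twoheadrightarrow\CC_G$, and then the reverse inclusion, which is exactly the injectivity of $\bar\psi$.

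For $\mathcal I\subseteq\ker\psi$ the crux is a direct computation of $\psi(p_I)$. Writing $Y_I:=\psi\bigl(\sum_{v\in I}x_v\bigr)=\sum_{v\in I}X_v=\sum_{e\in E}\bigl(\sum_{v\in I}c_{v,e}\bigr)\phi_e$ and using \eqref{eq:def}, for an edge $e=\{a,b\}$ the coefficient $\sum_{v\in I}c_{v,e}$ equals $0$ when $a,b$ lie on the same side of the partition $(I,I^c)$ (and also for loops), and equals $\pm 1$ precisely when $e$ crosses the cut. Hence $Y_I$ is a signed sum of exactly $D_I$ distinct edge variables $\phi_e$. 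Since $\phi_e^2=0$ in $\Phi_G$ by \eqref{eq:sq-free}, expanding $Y_I^{\,D_I+1}$ by the multinomial theorem yields only monomials that are products of $D_I+1$ factors drawn from $D_I$ distinct variables; by the pigeonhole principle some $\phi_e$ occurs with exponent $\ge 2$ in each such monomial, so every term vanishes and $\psi(p_I)=Y_I^{\,D_I+1}=0$. This gives the surjection $\bar\psi$.

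It remains to show $\ker\psi\subseteq\mathcal I$, equivalently that $\bar\psi$ is injective; as $\bar\psi$ is already surjective, this is equivalent to the numerical identity $\dim_\K S/\mathcal I=\dim_\K\CC_G$. The dimension of $\CC_G$ is the number of spanning forests of $G$ (the Wagner/Postnikov--Shapiro count recalled in the introduction), so the task reduces to bounding $\dim_\K S/\mathcal I$ from above by the same number. I would do this through the apolarity (Macaulay inverse system) duality for power ideals: identify $S/\mathcal I$ with the dual of the space of polynomials annihilated, under the contraction action $x_v=\partial_{\xi_v}$, by all the $p_I$, and exhibit a spanning set of that dual space indexed by the spanning forests of $G$. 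Concretely, one checks that the forest ``$\mathcal P$-space'' has the right dimension and is apolar to $\mathcal I$, which pins down $\dim_\K S/\mathcal I$.

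The main obstacle is precisely this last step: the generators $p_I$, one for every subset $I\subseteq V$, are enormously redundant, so a naive monomial or Gröbner-basis count of $S/\mathcal I$ is unwieldy, and the real content lies in producing the correct forest-indexed basis. A robust alternative that sidesteps explicit duality is a deletion--contraction induction: for an ordinary edge $e$ (neither a loop nor a bridge) one relates the power ideals of $G$, $G-e$ and $G/e$ and matches the resulting short exact sequence of quotients with the deletion--contraction recursion for the forest-counting Tutte specialization, the base cases being graphs consisting only of loops and bridges. Either route closes the inequality $\dim_\K S/\mathcal I\le \#\{\text{spanning forests of }G\}$, and combined with the surjectivity of $\bar\psi$ it forces $\bar\psi$ to be an isomorphism.
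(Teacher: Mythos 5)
Your proposal is correct in outline, and its closing ``robust alternative'' is in fact the paper's own proof: the authors establish Theorem~\ref{thm:power} together with Theorem~\ref{th:forests} by exhibiting parallel short exact sequences $0\to \CC_{G/e}\to \CC_G\to \CC_{G-e}[1]\to 0$ and $0\to \cB_{G/e}\to \cB_G\to \cB_{G-e}[1]\to 0$ induced by $\partial/\partial\phi_e$ for a non-loop edge $e$, and then inducting on $|E|$; this forces the surjection $\cB_G\twoheadrightarrow\CC_G$ to be an isomorphism by a dimension count. Two points of comparison. First, your primary route via apolarity and a forest-indexed basis of the inverse system is a genuinely different argument (it is the one used in parts of the power-ideal literature, e.g.\ Holtz--Ron and Ardila--Postnikov); it buys an explicit monomial/forest basis of $\K[x_v]/\mathcal I$ rather than just equality of dimensions, but it is also the step you leave least substantiated --- producing the apolar $\mathcal P$-space and proving it has the right dimension is where essentially all the work lives, and you only assert it. The deletion--contraction route is lighter, though note that the real content there is the exactness of the sequence for $\cB_G$ (in particular that the kernel of the degree-lowering map is exactly the image of $\cB_{G/e}$), which neither you nor the paper spells out; the upper bound $\dim\cB_G\le\dim\cB_{G/e}+\dim\cB_{G-e}$ needed to close the argument does follow from right-exactness alone, which is the easier half. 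Second, your explicit verification that $\psi(p_I)=0$ --- the cut computation showing $\sum_{v\in I}X_v$ is a signed sum of exactly $D_I$ distinct square-zero variables, followed by pigeonhole --- is a detail the paper omits entirely but genuinely needs in order to have the surjection $\cB_G\to\CC_G$ in the first place, so including it is a point in your favor.
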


The dimension and the Hilbert function of the algebra $\CC_G$ were also found
in~\cite{Wa2,PSS}.

\begin{theorem}
\label{th:forests}
{\rm (i)}  The dimension of the algebra $\CC_G$ is equal to the number of
spanning subforests in $G$ (which is the same as the number of acyclic
subsets of edges $S\subseteq E$).

{\rm (ii)} The dimension of the $k$th graded component of $\CC_G$ is
equal to the number of subforests $S\subseteq E$ with the external
activity $|E-S|-k$.
\end{theorem}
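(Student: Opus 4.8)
The plan is to prove both parts simultaneously by induction on $|E|$, via a deletion--contraction short exact sequence that refines the functorial maps of Section~\ref{sec:prelim}. Fix a linear order on $E$ and let $e$ be its largest edge. If $e$ is a loop it does not contribute to any flow generator, so $\CC_G=\CC_{G-e}$ verbatim and we pass to $G-e$. Otherwise write $e=\{u,v\}$, let $w$ denote the vertex of $G_e$ obtained by merging $u$ and $v$, and consider the sequence of graded vector spaces
\[
0\longrightarrow \CC_{G/e}\xrightarrow{\ \iota\ }\CC_G\xrightarrow{\ \partial_e\ }\CC_{G-e}(-1)\longrightarrow 0 ,
\]
where $\iota$ is the degree-$0$ contraction homomorphism (sending $X_w\mapsto X_u+X_v$) and $\partial_e$ is the odd derivation of $\Phi_G$ contracting $\phi_e$, normalized by $\partial_e\phi_e=1$ and $\partial_e\phi_f=0$ for $f\neq e$; since $\partial_e$ lowers degree by one, the target carries a shift. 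Granting exactness, one reads off the Hilbert-series recursion $H_{\CC_G}(t)=H_{\CC_{G/e}}(t)+t\,H_{\CC_{G-e}}(t)$, while the loop case gives $H_{\CC_G}=H_{\CC_{G-e}}$.

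Setting up the sequence is the first task. Because $\partial_e$ is a derivation with $\partial_e X_v=c_{v,e}\in\{0,\pm1\}$ (nonzero only at the two endpoints of $e$), the graded Leibniz rule yields, on a product of generators,
\[
\partial_e\bigl(X_{v_1}\cdots X_{v_k}\bigr)=\sum_{i}\pm\,c_{v_i,e}\,X_{v_1}\cdots\widehat{X_{v_i}}\cdots X_{v_k}\big|_{\phi_e=0},
\]
which is a combination of products of the generators of $\CC_{G-e}$; hence $\partial_e$ restricts to a well-defined map $\CC_G\to\CC_{G-e}$. The composite $\partial_e\circ\iota$ vanishes because every element of $\iota(\CC_{G/e})$ is free of $\phi_e$, and injectivity of $\iota$ is the content of the contraction remark in Section~\ref{sec:prelim}. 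Surjectivity of $\partial_e$ also holds: prepending a generator $X_u$ at an endpoint of $e$ to a product of $\CC_{G-e}$-generators and applying $\partial_e$ returns that product plus terms in which an endpoint generator has been substituted, and a triangularity argument over such substitutions shows that every element of $\CC_{G-e}$ is attained.

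The main obstacle is exactness in the middle, i.e.\ the inclusion $\ker(\partial_e|_{\CC_G})\subseteq\iota(\CC_{G/e})$: one must show that an element of $\CC_G$ involving no $\phi_e$ necessarily comes from the contracted algebra. I would deduce this from a dimension count, degree by degree, rather than by a direct rewriting. Granting the classical a~priori upper bound $\dim\CC_G\le N(G)$, where $N(G)$ is the number of acyclic edge subsets (witnessed by a forest-indexed spanning set of $\CC_G$), together with the inductive hypotheses $\dim\CC_{G/e}=N(G/e)$, $\dim\CC_{G-e}=N(G-e)$ and the identity $N(G)=N(G/e)+N(G-e)$, surjectivity of $\partial_e$ forces
\[
\dim\CC_G=\dim\ker(\partial_e|_{\CC_G})+N(G-e)\ \ge\ \dim\iota(\CC_{G/e})+N(G-e)=N(G),
\]
so every inequality is an equality; in particular $\ker(\partial_e|_{\CC_G})=\iota(\CC_{G/e})$ and the sequence is exact. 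I expect this squeeze, and the construction of the forest-indexed spanning set underlying the upper bound, to be the only genuinely delicate points.

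It remains to match the algebra with the combinatorics. The count $N$ satisfies $N(G)=N(G/e)+N(G-e)$ for any non-loop $e$ and $N(G)=N(G-e)$ for a loop, while the external-activity enumerator $f_G(t)=\sum_{S}t^{\,|E-S|-\mathrm{ea}(S)}$, summed over spanning forests $S$, obeys the same recursions when $e$ is the maximal edge: if $S\not\ni e$ then $e$ is externally passive (a maximal edge is never the minimum of a fundamental cycle), contributing the factor $t$ and hence the term $t\,f_{G-e}$, whereas if $S\ni e$ the external edges and their activities are unchanged upon contracting $e$, giving $f_{G/e}$; loops are externally active for every forest and leave $f$ unchanged. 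Comparing with the Hilbert-series recursion above and using the base case of the edgeless graph ($\CC_G=\K$, one empty forest), induction gives $H_{\CC_G}(t)=f_G(t)$, which is exactly statement~(ii) once the exponent is read as the number of externally passive edges. Statement~(i) then follows by setting $t=1$, since $f_G(1)$ counts all spanning forests, equivalently all acyclic edge subsets.
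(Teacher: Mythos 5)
Your proof follows the same route as the paper: the deletion--contraction exact sequence $0\to\CC_{G/e}\to\CC_G\to\CC_{G-e}[1]\to 0$ built from the contraction embedding $X_w\mapsto X_u+X_v$ and the derivation $\partial/\partial\phi_e$, the resulting recursion $H_{\CC_G}(t)=H_{\CC_{G/e}}(t)+t\,H_{\CC_{G-e}}(t)$, and induction on $|E|$ matched against the external-activity enumerator. The only substantive difference is that you supply an argument (the degree-by-degree dimension squeeze against the forest-indexed spanning set giving $\dim\CC_G\le N(G)$) for exactness in the middle, a point the paper simply asserts.
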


\begin{proof}
  \label{rem:del-contr}
Functoriality of the algebra $\CC_G$ with respect to graph homomorphisms
leads to simple proofs of Theorems~\ref{thm:power} and \ref{th:forests}.

Indeed, let $e\in E$ be a non-loop edge of a graph $G$.
Consider two algebra homomorphisms, a projection $j_e^*\colon \CC_G\to \CC_{G-e}$
(which corresponds to sending $\phi_e\in \Phi_G$ to $0$) and an embedding
$\pi_e^*\colon \CC_{G/e}=\CC_{G_e}\to \CC_G$ (which maps $X_w$ to $X_u+X_v$).

Denote by $\ds \delta_e: = \frac{\partial}{\partial
  \phi_e}\colon \Phi_G\to \Phi_{G-e}$, the partial derivative with respect to the
edge variable $\phi_e\in \Phi_G$. Modulo $(\phi_e)$ the map $ \delta_e$ is indeed a derivation
of the edge algebra $\Phi_G$ which sends the subalgebra $\CC_G$ onto
$\CC_{G-e}$ and generates an exact sequence of graded spaces
$$
0\to \CC_{G/e}\to \CC_G \to \CC_{G-e}[1]\to 0,
$$
where the rightmost map (induced by $\delta$) decreases the grading  by $1$.
This exact sequence implies the relation 
\begin{equation}\label{eq:Tutte}
H_{\CC_G}(t)=H_{\CC/e}(t)+tH_{\CC_{G-e}}(t),
\end{equation}
for the \HS series which proves Theorem~\ref{th:forests} by induction on $|E|$.

To settle Theorem~\ref{thm:power} we can use similar maps and an exact sequence
$$
0\to \cB_{G/e}\to \cB_G \to \cB_{G-e}[1]\to 0,
$$
for quotient algebras $\cB_G:=\K[x_v\, : \, v\in V]/(p_I)_{I\subseteq V } $,
where the polynomial $p_I$ is given by~\eqref{eq:relF}.
\end{proof}

\

In~\cite{Ne} G.~ Nenashev has shown that $\CC_G$ contains all
information about the graphical matroid of $G$ and only it.

\begin{PROPO}[Theorem~5 of \cite{Ne}]\label{nenashev} Given two undirected
(multi)graphs $G_1$ and $G_2,$ algebras $\CC_{G_1}$ and $\CC_{G_2}$ are
isomorphic  if and only if the graphical matroids of $G_1$ and $G_2$ coincide.
(The  latter isomorphism can be thought of either as graded or as non-graded,
the statement holds in both cases.)
\end{PROPO}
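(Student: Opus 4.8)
The plan is to treat the two implications separately. For ``isomorphic matroids $\Rightarrow$ isomorphic algebras'' I would read off the power-algebra presentation of Theorem~\ref{thm:power}: the ideal is generated by the powers $p_I=\ell_I^{D_I+1}$ of the forms $\ell_I=\sum_{v\in I}x_v$, and I would argue that the datum consisting of the forms $\ell_I$ together with the exponents $D_I+1$ is a matroid invariant. Concretely, $\ell_I$ annihilates the edge-vector $a_e=\mathbf e_u-\mathbf e_w$ of $e=\{u,w\}$ exactly when $e$ does not cross the cut $(I,I^c)$, so the presentation remembers only the graphic hyperplane arrangement $\{\lambda_u=\lambda_w\}$ with multiplicities coming from parallel edges. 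Since graphic matroids are regular, hence uniquely representable up to the obvious equivalences, an isomorphism $M(G_1)\cong M(G_2)$ induces a linear isomorphism of the ambient spaces matching these hyperplanes and their multiplicities (any Whitney twist leaves both the matroid and the presentation unchanged); such a map carries the ideal $(p_I)$ of one graph onto that of the other and yields a graded algebra isomorphism.

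The substantial direction is the converse, where I must reconstruct the matroid from the algebra alone. The central device is the vanishing-order function on the degree-one component $\CC_G^{(1)}=\langle X_v:v\in V\rangle$. Writing a degree-one element as $\ell=\sum_v\lambda_v X_v=\sum_e\mu_e\phi_e$ with $\mu_e=\lambda_u-\lambda_w$ for $e=\{u,w\}$, and using $\phi_e^2=0$, one computes $\ell^d=d!\sum_{|S|=d}\big(\prod_{e\in S}\mu_e\big)\phi_S$ in $\Phi_G$, where $\phi_S=\prod_{e\in S}\phi_e$. As the $\phi_S$ are linearly independent and $\mathrm{char}\,\K=0$, this vanishes precisely when fewer than $d$ edges have $\mu_e\neq0$. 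Hence the invariant $\nu(\ell):=\min\{d:\ell^d=0\}$ equals $n(\lambda)+1$, where $n(\lambda)=\#\{e=\{u,w\}:\lambda_u\neq\lambda_w\}$ counts the edges not made monochromatic by the vertex-labeling $\lambda$.

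I would then recover the matroid from the function $\nu$, which is manifestly preserved by any graded algebra isomorphism. For a fixed $k$ the locus $\{\nu\le k\}$ is the union of the linear subspaces $L_S=\{\lambda:\lambda_u=\lambda_w\ \forall\{u,w\}\in S\}$ over edge-sets $S$ with $|S|\ge|E|-k+1$; its inclusion-maximal subspaces correspond to the closed edge-sets (flats) of $M(G)$, since the edges monochromatic on the generic point of $L_S$ are exactly the closure $\bar S$. The value of $\nu$ along $L_S$ then records $|E|-|\bar S|+1$, which detects the flat sizes and, in particular, the number of parallel edges on each hyperplane. Thus the stratification of $\CC_G^{(1)}$ by $\nu$ reproduces the intersection lattice of the weighted graphic arrangement, i.e.\ the lattice of flats of $M(G)$ together with the parallel-class sizes; this determines $M(G)$ and lets me conclude $M(G_1)=M(G_2)$.

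To handle the ungraded case I would pass to the associated graded algebra with respect to the $\mathfrak m$-adic filtration by the augmentation ideal; since $\CC_G$ is already connected graded, $\gr_{\mathfrak m}\CC_G\cong\CC_G$, and any ungraded isomorphism induces a graded one on associated gradeds, reducing to the case just treated. The delicate point---the main obstacle---is the previous paragraph: showing that the purely algebraic invariant $\nu$ genuinely sees the full lattice of flats \emph{with} multiplicities, rather than only the coarse numerical data already captured by the Hilbert series of Theorem~\ref{th:forests} (which is a Tutte specialization and does \emph{not} determine the matroid). Making the identification of the maximal components of $\{\nu\le k\}$ with flats rigorous, and verifying its stability under the isomorphism, is where the real work lies, and this is the content of Nenashev's argument.
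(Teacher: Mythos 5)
The paper does not actually prove this statement: it is imported verbatim as Theorem~5 of \cite{Ne}, so there is no in-house argument to compare yours against. Judged on its own, your sketch follows what is essentially the known route and is sound in outline. For the forward direction, reducing to the power-ideal presentation of Theorem~\ref{thm:power} and invoking unique representability of regular matroids is the standard argument; to make it airtight you should state explicitly that the ideal depends only on the projective equivalence class of the vector configuration $\{a_e\}$ (not on the vertex set, which can differ between $2$-isomorphic graphs), and note the caveat that loops are invisible to $\CC_G$, so the statement must be read for loopless multigraphs, consistent with the paper's remark that earlier works excluded loops. For the converse, your key computation is correct: in characteristic $0$ one has $\ell^{\,d}=d!\sum_{|S|=d}\bigl(\prod_{e\in S}\mu_e\bigr)\phi_S$, so the intrinsic invariant $\nu(\ell)=\min\{d:\ell^{\,d}=0\}$ on the degree-one piece equals one plus the number of edge coordinates not vanishing at $\ell$, and this function determines the graphic arrangement with multiplicities --- the distinct hyperplanes are the irreducible components of the Zariski-closed locus $\{\nu\le |E|\}$, and the multiplicity of a hyperplane is read off from the generic value of $\nu$ on it --- hence the matroid. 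The reduction of the ungraded case to the graded one via $\gr_{\mathfrak m}$ is also correct, since $\CC_G$ is local Artinian and generated in degree one, so $\mathfrak m^k=\CC_G^{\ge k}$ and $\gr_{\mathfrak m}\CC_G\cong\CC_G$. The only substantive incompleteness is the one you flag yourself: the passage from the stratification of the degree-one piece by $\nu$ to the lattice of flats with multiplicities is asserted rather than carried out, and that identification (together with its invariance under an abstract algebra isomorphism) is precisely the content of Nenashev's proof.
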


\subsection{Deformed  algebras $\CC^f_G$}
\label{sec:alg}

\subsubsection{Definition}
\label{sec:definitions}

The main object of our study is a certain family of filtered algebras, introduced
in \cite{NSh}, which we call \emph{deformed zonotopal algebras}.
\begin{definition}
A formal power series $f=a_0+a_1u+a_2u^2+\ldots \in \K[[u]]$ is called
 \emph{nondegenerate} if $a_1\ne 0$, i.e.\ when $f'(0)\ne 0$.
\end{definition}

\begin{definition}
For a graph $G=(V,E)$ and a nondegenerate power series $f\in \K[[u]]$,  we define
the \emph{deformed zonotopal algebra} of $G$ associated to $f$ as the
subalgebra of the edge algebra $\Phi_G$   generated by the elements
$$Y_v:=f(X_v)=f\bigl(\sum_{e\in E} c_{v,e}\phi_e\bigr), \ v\in V,$$
where $c_{v,e}$ are given by~\eqref{eq:def}.
\end{definition}
In particular, for $f=u$,
the algebra  $\CC^f_G$ is the usual zonotopal algebra
 $\CC_G$ discussed above and for  $f=e^u$,   this algebra
 coincides with the $K$-theoretic analog $\cK_G$ of $\CC_G$ studied in \cite{NSh}.

\begin{rem}
\label{rem:nilpot}
Since $\phi_e^2=0$, the element $X_v$ is nilpotent with $X_v^n=0$
for $n>\deg v$, and so plugging it into a power series is well-defined.
Moreover, this argument also shows that the terms of $f$ of degree higher than
$$\md_G:=\max_{v\in V} \deg v,$$
the maximal degree of a vertex in $G$,
do not affect any of the generators $Y_v$ of $\CC_G^f$. Therefore, we can
restrict our attention to those $f\in \K[[u]]$ which are polynomials in $u$ of
degree at most $\md_G$.
\end{rem}

The algebra $\CC^f_G$ is endowed with an increasing filtration
\begin{equation}
  \label{eq:filtration}
\K=\CC^{f,0}_G\subset\CC^{f,1}_G\subset\CC^{f,2}_G\subset\ldots,
\end{equation}
where the subspace $\CC^{f,k}_G$ is spanned by the monomials of degree at most
$k$ in the generators $Y_v=f(X_v)$.

\begin{rem}
\label{rem:const}
Notice that neither the algebra $\CC^f_G$ nor this filtration depend
on the constant term $a_0=f(0)$ of $f$, since changing $f$ by a constant
modifies $Y_v$ by this constant which is an element of $\K=\CC^{f,0}_G$.
It is also clear that multiplying $f$ by a nonzero constant does not change
the filtration~\eqref{eq:filtration}.

For this reason, from now on,  we will assume that $f$ has no constant term
i.e.\ that $f-u^k\in u^{k+1}\K[[u]]$ for $k\ge 1$.
\end{rem}

When $f = u$, this filtration coincides with the filtration induced by the
grading on $\CC_G$.
Let us now present some basic properties of $\CC^f_G$ proven in \cite{NSh}.

\begin{PROPO}[{\cite[Proposition~2]{NSh}}]
\label{prop2}
If $f$ is a nondegenerate series, then the algebra $\CC^f_G$  coincides, as a
subalgebras of $\Phi_G$, with the usual zonotopal algebra $\CC_G$.
In other words, in this case the only difference between the graded algebra
$\CC_G$ and  $\CC^f_G$ is in their filtrations.
\end{PROPO}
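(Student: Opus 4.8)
The plan is to show that the subalgebra $\CC^f_G \subset \Phi_G$ generated by the elements $Y_v = f(X_v)$ equals the subalgebra $\CC_G$ generated by the $X_v$, as (unfiltered) subalgebras of $\Phi_G$. Since $\CC^f_G$ and $\CC_G$ are both subalgebras of the same ambient algebra $\Phi_G$, it suffices to prove the two inclusions $\CC^f_G \subseteq \CC_G$ and $\CC_G \subseteq \CC^f_G$. The first inclusion is immediate: by Remark~\ref{rem:const} we may assume $f$ has no constant term, so $f(u) = a_1 u + a_2 u^2 + \cdots$ with $a_1 \neq 0$, and since $X_v \in \CC_G$ and $\CC_G$ is a subalgebra, every polynomial $f(X_v) = Y_v$ (with no constant term) lies in $\CC_G$. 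Hence $\CC^f_G \subseteq \CC_G$.

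The content is in the reverse inclusion $\CC_G \subseteq \CC^f_G$, for which I would show that each generator $X_v$ can be recovered as a polynomial in the $Y_w$'s, i.e.\ that the generators $X_v$ lie in $\CC^f_G$. The key tool is the nilpotency observed in Remark~\ref{rem:nilpot}: each $X_v$ is nilpotent in $\Phi_G$ with $X_v^{n} = 0$ for $n > \deg v$. Because $a_1 = f'(0) \neq 0$, the power series $f$ has a formal compositional inverse $g \in \K[[u]]$ with $g(0) = 0$ and $g'(0) = 1/a_1 \neq 0$, satisfying $g(f(u)) = u$. The plan is to argue that substituting the nilpotent element $Y_v$ into $g$ makes sense (the series terminates because $Y_v$ is itself nilpotent) and recovers $X_v$, namely $g(Y_v) = g(f(X_v)) = X_v$. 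Concretely, I would note that since $a_1 \neq 0$, the leading term of $Y_v = f(X_v)$ is $a_1 X_v$, so $X_v$ is expressible as $a_1^{-1} Y_v$ plus a polynomial in higher powers of $X_v$; an inductive argument on the (bounded) nilpotency degree then writes each $X_v$ as a polynomial in $Y_v$ with zero constant term, and such a polynomial lies in $\CC^f_G$ by definition.

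The one point requiring care, and the main obstacle, is verifying that the formal substitution $g(Y_v)$ is legitimate and genuinely yields $X_v$ inside $\Phi_G$. This is not purely formal manipulation of power series: one must confirm that the identity $g(f(u)) = u$ in $\K[[u]]$ survives the substitution $u \mapsto X_v$ once $X_v$ is nilpotent. The cleanest way is to truncate both $f$ and $g$ to polynomials of degree at most $\deg v$ (equivalently at most $\md_G$, as in Remark~\ref{rem:nilpot}); modulo $u^{\deg v + 1}$ the composition $g \circ f$ equals the identity, and since $X_v^{\deg v + 1} = 0$, evaluating the truncated polynomials at $X_v$ gives exactly $X_v$. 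Here one uses char $\K = 0$ only insofar as it guarantees the compositional inverse $g$ exists over $\K$ with the required normalization; over a field of characteristic $0$ this inversion is unobstructed since the relevant denominators are powers of $a_1$ together with integer coefficients arising from Lagrange inversion.

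With both inclusions established, $\CC^f_G = \CC_G$ as subalgebras of $\Phi_G$, and since the underlying vector spaces and multiplications coincide, the only remaining difference between the graded algebra $\CC_G$ and the deformed algebra $\CC^f_G$ is the filtration~\eqref{eq:filtration}, as claimed.
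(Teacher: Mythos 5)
Your proposal is correct and follows essentially the same route as the paper: the inclusion $\CC^f_G\subseteq\CC_G$ is immediate, and the reverse inclusion is obtained by constructing the compositional inverse $g$ of $f$ (possible since $f'(0)\neq 0$), truncating it to a polynomial using the nilpotency of the $X_v$ from Remark~\ref{rem:nilpot}, and observing $g(Y_v)=X_v$. The only difference is cosmetic — the paper solves $f(g(u))=u$ coefficient by coefficient rather than invoking $g(f(u))=u$ directly — and your remark about characteristic $0$ is unnecessary caution, since the inversion only requires $a_1$ to be invertible.
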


\begin{proof} Firstly, by Remark~\ref{rem:nilpot} we can assume that $f$ is a
polynomial.
 Secondly, if $f$ is a polynomial we have the inclusion  $\CC_G^f\subseteq \CC_G$
 because $f(X_i)\in \CC_G$ for every $i$.
 Finally, there   exists a polynomial $g(u)$ such that $X_i=g(f(X_i))$ for every $i$
 which finishes the proof.
 Indeed,  the formal power series
 $g(u)=\sum_{i \geq 1} a_i u^i$ can be  found by requiring that
 $$
 u = f(\sum_{i \geq 1} a_i u^i) = c_1a_1 u + (c_1a_2 +
  c_2a_1^2) u^2 + (c_1a_3 + 2c_2 a_1a_2 + c_3a_1^3) u^3
  + \cdots.
  $$
Under the assumption that $c_1\neq 0$, this system of equations can solved for
each $a_j$ consecutively by induction.
The resulting power series $g(u)$
can be truncated to a polynomial in view of
Remark~\ref{rem:nilpot}.
\end{proof}

This proposition together and the above remark explain  why we focus our
attention on (nondegenerate) polynomials  $f = u + \ldots$.

\begin{THEO}[{\cite[Theorem~6]{NSh}}]
\label{th:Isom-f}
Let $f$ be a polynomial with non-vanishing linear and quadratic terms and let
$G_1$ and $G_2$ be two simple graphs without isolated vertices.
Then  $\CC^f_{G_1}$ and $\CC^f_{G_2}$ are isomorphic as filtered algebras  if
and  only if  the graphs $G_1$ and $G_2$ are isomorphic.
\end{THEO}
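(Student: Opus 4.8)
The plan is to treat the two implications separately; the ``if'' direction is formal, and the ``only if'' direction carries all the substance. For ``if'', I would argue by functoriality: given a graph isomorphism $\theta\colon G_1\to G_2$, choose the linear order on the vertices of $G_2$ to be transported from that of $G_1$, so the underlying maps are monotone. The induced isomorphism of edge algebras $\Phi_{G_1}\cong\Phi_{G_2}$ from~\eqref{eq:functor} then carries each vertex flow $X_v$ to the flow of the corresponding vertex, up to the sign changes of the $\phi_e$ noted in the remark following the definition of $\CC_G$. Since each generator $Y_v=f(X_v)$ is an intrinsic function of $X_v$, the isomorphism preserves the generators and hence the filtration~\eqref{eq:filtration}, producing a filtered isomorphism $\CC^f_{G_1}\cong\CC^f_{G_2}$.

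For ``only if'', write $f=c_1u+c_2u^2+\cdots$ with $c_1c_2\neq0$, so that $Y_v=c_1X_v+c_2X_v^2+\cdots$, and set $A=\CC^f_G$. The plan is to reconstruct the simple graph $G$ from the filtered algebra $A$. First I would note that a filtered isomorphism is in particular an isomorphism of the underlying (unfiltered) algebras, which by Proposition~\ref{prop2} are the graded zonotopal algebras; Proposition~\ref{nenashev} then forces the graphical matroids of $G_1$ and $G_2$ to coincide, so by Whitney's $2$-isomorphism theorem $G_1$ and $G_2$ are already $2$-isomorphic. The remaining task---distinguishing a graph from its $2$-isomorphic but non-isomorphic relatives---is exactly what the hypothesis $c_2\neq0$ should accomplish, and it must be extracted from the filtration rather than from the bare algebra.

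The mechanism I would use lives in the low-degree part of the associated graded algebra $\gr A=\bigoplus_k\CC^{f,k}_G/\CC^{f,k-1}_G$, which is an invariant of the filtered isomorphism class. In filtration degree one, $\gr_1 A$ is spanned by the symbols $\bar Y_v$, and Theorem~\ref{thm:power} lets me read the vertex degree $\deg v$ off the nilpotency index of the corresponding flow, via the relation $X_v^{\deg v+1}=0$ coming from the one-element set $I=\{v\}$. The quadratic term controls degree two: a relation $\sum a_{uv}\bar Y_u\bar Y_v=0$ in $\gr A$ says that $c_1^2\sum a_{uv}X_uX_v$ coincides in $\CC_G$ with a combination $c_2\sum_w b_wX_w^2$ of the quadratic parts of the generators. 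Since $X_uX_v$ and $X_w^2$ differ precisely by which pairs of incident edges are recorded, comparing the degree-two relations of $\gr A$ against the genuine degree-two relations of the underlying $\CC_G$ (the squares $(\sum_{v\in I}x_v)^2$ with $D_I=1$, which are matroid data) should isolate a quadratic ``shared-edge'' pairing that is nonzero exactly because $c_2\neq0$. For a simple graph this pairing records whether each pair $\{u,v\}$ is adjacent, and together with the vertex set and the degrees it reconstructs $G$; hence a filtered isomorphism $\CC^f_{G_1}\cong\CC^f_{G_2}$ would yield $G_1\cong G_2$.

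The hard part will be that none of this data is canonical in $A$: the generators $Y_v$ are not distinguished elements, and a filtered automorphism may mix them freely with one another and with higher products. The crux is therefore to characterize the set of vertex directions intrinsically inside $\gr_1 A$---I would try to single them out as the extremal, rank-one elements determined jointly by the nilpotency filtration and the degree-two multiplication---and then to prove that the shared-edge pairing defined on them is preserved by every filtered isomorphism. Verifying that this pairing genuinely separates $2$-isomorphic but non-isomorphic graphs, and confirming that simplicity and the absence of isolated vertices are precisely the conditions making vertices and edges recoverable (isolated vertices give $X_v=0$, and multiple edges blur the binary adjacency pairing), is where I expect the essential difficulty to lie.
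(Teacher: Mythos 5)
The paper itself contains no proof of this statement --- it is imported verbatim from \cite[Theorem~6]{NSh} --- so your proposal has to stand on its own rather than be measured against an internal argument. Your ``if'' direction is fine: a graph isomorphism with compatible vertex orderings induces an isomorphism $\Phi_{G_1}\cong\Phi_{G_2}$ carrying each $X_v$ to $X_{\theta(v)}$ up to sign, hence $Y_v=f(X_v)$ to $Y_{\theta(v)}$, and the filtration~\eqref{eq:filtration} is preserved.

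The ``only if'' direction, however, is an outline with the decisive step missing. The reduction via Proposition~\ref{prop2}, Proposition~\ref{nenashev} and Whitney's $2$-isomorphism theorem correctly isolates the residual problem (distinguishing $2$-isomorphic but non-isomorphic graphs), and your ``shared-edge pairing'' is the right kind of mechanism: a relation $\sum a_{uv}Y_uY_v\in\CC^{f,1}_G$ forces, upon comparing graded components inside $\CC_G$, an identity of the form $c_1^2\sum a_{uv}X_uX_v=c_2\sum b_wX_w^2$ in degree two, whose solvability is governed by which pairs of vertices share an edge. But you explicitly defer the two claims on which everything rests: (i) that the vertex directions, and hence the pairing, can be characterized \emph{intrinsically} inside $\gr_1 A$, so that no filtered isomorphism --- which is free to replace each $Y_v$ by a combination of the $Y_w$'s and of elements deeper in the filtration --- can scramble it; and (ii) that the resulting adjacency data genuinely separates $2$-isomorphic non-isomorphic simple graphs. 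Without (i) the pairing is not an invariant of the filtered isomorphism class and the argument does not start; without (ii) it may be an invariant that fails to distinguish the graphs. You also do not verify where the hypotheses (simple, no isolated vertices, $c_2\neq 0$) enter, beyond guessing. As written this is a plausible plan of attack, not a proof; the essential content of the theorem remains unestablished.
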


\medskip

\begin{definition}
  \label{def:param_space}
Given a graph $G$, we will call the affine space
\begin{equation}
  \label{eq:poly-space}
\cA_G=\{f\in \K[u] \,|\, f(0)=0, f'(0)=1, \deg f\leq \md_G \}
\end{equation}
\emph{the space of parameters of deformed zonotopal algebras} of $G$.
\end{definition}

For $f\in \cA_G$, we will be interested in the  \emph{\HS  sequence}
 $$\hs^f_{G} := (\dim_\K \CC^{f,j}_G/\CC^{f,j-1}_G)_{j \geq 0}$$
 of the filtered algebra $\CC^f_G$ (where, by convention, $\CC_G^{f,-1}=0$).

 \begin{PROPO}[{\cite[Proposition~3]{NSh}}; see also Theorem~\ref{thm semi}.]
 \label{pr-hil}
There exists a non-empty Zariski open subset $U\subset \cA_G$ such that the
\HS sequences $\hs^f_G$, for all $f\in U$, are the same and  maximal among all possible \HS sequences $\hs^f_{G}$,  $f\in \cA_G$  in the
lexicographic order.
\end{PROPO}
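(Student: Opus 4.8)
The plan is to reduce the statement to the lower semicontinuity of the dimensions of the filtration pieces viewed as functions of the parameter $f\in\cA_G$. Write $S_j(f):=\dim_\K\CC^{f,j}_G$ for the partial sums of the Hilbert sequence, so that $\hs^f_G=(S_j(f)-S_{j-1}(f))_{j\geq 0}$. By Proposition~\ref{prop2} the algebra $\CC^f_G$ equals $\CC_G$ as a subalgebra of $\Phi_G$ for every $f\in\cA_G$, so its total dimension $N:=\dim_\K\CC_G$ (the number of spanning forests of $G$ by Theorem~\ref{th:forests}) does not depend on $f$. Since $\CC^f_G$ is generated in filtration degree $1$, the sequence $1=S_0(f)\le S_1(f)\le\cdots$ is non-decreasing and strictly increases until it reaches $N$; hence $S_j(f)=N$ for all $j\ge N$ and every $f$, and only the finitely many functions $S_0,\ldots,S_N$ are relevant. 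Because the total $\sum_j(S_j-S_{j-1})=N$ is constant, comparing two Hilbert sequences lexicographically is the same as comparing the partial-sum sequences $(S_j(f))_j$ lexicographically: once $S_0,\ldots,S_{j-1}$ agree, maximizing the $j$-th entry $S_j-S_{j-1}$ of the Hilbert sequence is equivalent to maximizing $S_j$.

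Next I would establish the key semicontinuity property. The edge algebra $\Phi_G$ is a fixed finite-dimensional $\K$-vector space with its standard square-free monomial basis. Identifying $f\in\cA_G$ with its coefficient vector $(a_2,\ldots,a_{\md_G})$ (recall $a_0=0$, $a_1=1$), each generator $Y_v=\sum_{i=1}^{\md_G}a_iX_v^i$ has coordinates that are affine-linear in these parameters, since the powers $X_v^i\in\Phi_G$ are fixed. Consequently every monomial $Y_{v_1}\cdots Y_{v_d}$ of degree $d\le j$, computed in $\Phi_G$, has coordinates that are polynomials in $(a_2,\ldots,a_{\md_G})$. Collecting all such monomials of degree at most $j$ as the rows of a matrix $A_j$ with polynomial entries, we obtain $S_j(f)=\rank A_j(f)$, the span of the rows being exactly $\CC^{f,j}_G$. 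The rank of a matrix with polynomial entries is a lower semicontinuous function on $\cA_G$: the locus where $\rank A_j\ge r$ is the complement of the common zero set of all $r\times r$ minors, hence Zariski open. In particular, setting $M_j:=\max_{f\in\cA_G}S_j(f)$, the set $U_j:=\{\,f\in\cA_G:S_j(f)=M_j\,\}$ is a nonempty Zariski open subset.

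Finally I would intersect. Since $\cA_G$ is an irreducible affine variety and only finitely many of the sets $U_0,\ldots,U_N$ are nontrivial, $U:=\bigcap_{j=0}^{N}U_j$ is a nonempty Zariski open subset on which \emph{all} partial sums $S_j$ simultaneously attain their global maxima $M_j$. For every $f\in U$ the partial-sum sequence $(S_j(f))_j=(M_j)_j$ is therefore the same, and by the first paragraph it is lexicographically maximal: any $f'\in\cA_G$ satisfies $S_j(f')\le M_j$ for all $j$, so at the first index where $(S_j(f'))$ differs from $(M_j)$ it is strictly smaller. Translating back, the Hilbert sequence $\hs^f_G$ is constant for all $f\in U$ and lexicographically maximal among all $\hs^{f'}_G$, $f'\in\cA_G$, which is the assertion. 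The main obstacle — and the heart of the argument — is the polynomial dependence in the second paragraph: one must check that the filtration pieces $\CC^{f,j}_G$ are genuinely spanned by vectors whose coordinates vary polynomially with $f$, so that the classical lower semicontinuity of matrix rank applies; once this is in place, the rest is formal.
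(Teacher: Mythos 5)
Your proof is correct, and it rests on the same key principle the paper invokes (the statement is cited from~\cite{NSh}, with the paper's own justification routed through Theorem~\ref{thm semi}): lower semicontinuity of the rank of a matrix whose entries depend polynomially on the parameters, applied to the partial sums $S_j(f)=\dim_\K\CC^{f,j}_G$, followed by intersecting finitely many nonempty Zariski-open loci in the irreducible space $\cA_G$ and converting componentwise maximality of $(S_j)$ into lexicographic maximality of the Hilbert sequence. The difference is in how the parameter-dependent matrix is produced. You exploit Proposition~\ref{prop2} to realize every $\CC^{f,j}_G$ inside the \emph{fixed} finite-dimensional ambient space $\Phi_G$, so that the monomials $Y_{v_1}\cdots Y_{v_d}$ have coordinates that are visibly polynomial in the coefficients of $f$ and $S_j(f)$ is literally a matrix rank; this is more elementary and self-contained. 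The paper instead works relatively over the coefficient ring $A=\K[T_2,\ldots,T_d]$: it forms the universal algebra $\CC^{[f]}_G$ via the presentation of Corollary~\ref{cor:relations}, views it as a finite $A$-module with a presentation $A^{\oplus m}\xrightarrow{M}A^{\oplus n}\to R\to 0$, and deduces semicontinuity of $H_m$ on all of $\Spec A$ (Theorem~\ref{thm semi}, Lemma~\ref{l linear algebra semi}). That extra generality is not needed for Proposition~\ref{pr-hil} itself, but it is what powers the later results — constructibility and closure of every Hilbert stratum (Corollary~\ref{cor constructive}), the freeness criterion in part (3) of Theorem~\ref{thm semi}, and the specialization Theorem~\ref{th:extra} — so your argument, while complete for the stated proposition, would not substitute for the paper's machinery elsewhere. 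The one point you rightly flag as the heart of the matter, polynomial dependence of the spanning vectors on $f$, is indeed fine: $Y_v=X_v+a_2X_v^2+\cdots$ has coordinates affine in $(a_2,\ldots,a_{\md_G})$ and products of boundedly many of them have polynomial coordinates, and since $\K$ is infinite (characteristic $0$) a finite intersection of nonempty open subsets of affine space over $\K$ is nonempty.
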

We will call the above maximal \HS sequence the \emph{general \HS sequence} and will denote it simply by $\hs_G$.

\section{Algebraic properties of deformed zonotopal algebras}
\label{sec:prel}

\subsection{Generators and relations}
\label{sec:gen-rels}

\begin{proposition} \label{prop:general}
Let $A$ be a finite dimensional local algebra over $\K$ with maximal ideal
$m$ and with a set of algebra generators $x_1,\dots , x_n\in m$.
Let $f\in \K[[u]]$ be a nondegenerate series with $f(0)=0$. Then 
\begin{enumerate}
\item the map $f\colon A \to A, \ a\mapsto f(a)$ is well-defined
  and invertible;
\item the elements $y_1 = f(x_1), \dots, y_n = f(x_n)$ generate $A$;
\item for $L\in \K[t_1,\ldots,t_n]$, the relation $L (x_1, \ldots, x_n) = 0$
holds in $A$ if and only if  the relation $L(f^{-1}(y_1),\dots, f^{-1}(y_n)) = 0$
holds in $A$.
\end{enumerate}
\end{proposition}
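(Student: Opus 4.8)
The plan is to prove the three assertions in order, since each builds on the previous one. The key technical fact throughout is that $A$ is a \emph{finite dimensional local} algebra, so its maximal ideal $m$ is nilpotent: there is some $N$ with $m^N=0$. This is what makes ``plugging nilpotent elements into a power series'' rigorous.

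\medskip
First I would establish (1). Since each generator $x_i$ lies in $m$, and more generally any element $a\in m$ is nilpotent (as $m$ is a nilpotent ideal), the expression $f(a)=\sum_{i\ge 1}a_i a^i$ is a finite sum and hence a well-defined element of $m$. To invert the map $a\mapsto f(a)$ on $m$, I would exhibit a formal inverse power series $g\in\K[[u]]$ with $g(0)=0$: because $f$ is nondegenerate, $a_1=f'(0)\ne 0$, so the compositional inverse $g$ exists in $\K[[u]]$ and is determined by solving $f(g(u))=u$ coefficient by coefficient (exactly as in the proof of Proposition~\ref{prop2}). Then $g$ likewise induces a well-defined map $m\to m$, and $g(f(a))=a$, $f(g(a))=a$ hold for all $a\in m$ since these compositional identities become identities of \emph{truncated} polynomials once we work modulo $m^N$. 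Thus $f\colon m\to m$ is a bijection; writing $f^{-1}=g$ fixes notation for the rest.

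\medskip
Next I would deduce (2). Let $A'\subseteq A$ be the subalgebra generated by $y_1,\dots,y_n$. Since $g=f^{-1}$ has no constant term, $x_i=g(y_i)$ is a polynomial in $y_i$ with no constant term, hence $x_i\in A'$ for each $i$. But the $x_i$ generate $A$, so $A'=A$, giving (2). Finally, for (3), the equivalence is essentially a formal substitution: for any $L\in\K[t_1,\dots,t_n]$, the element $L(x_1,\dots,x_n)$ equals $L\bigl(f^{-1}(y_1),\dots,f^{-1}(y_n)\bigr)$ as elements of $A$, because $x_i=f^{-1}(y_i)$ by part (1). Two expressions that are literally equal in $A$ vanish simultaneously, which is exactly the claimed equivalence.

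\medskip
I expect the only genuine subtlety—what I would call the main obstacle—to be the rigorous justification of the compositional-inverse argument in step (1): one must check that the formal identities $f\circ g=\mathrm{id}$ and $g\circ f=\mathrm{id}$ in $\K[[u]]$ descend to honest maps on $m$, which requires confirming that all computations may be carried out modulo the nilpotency bound $m^N=0$ so that only finitely many coefficients of $f$ and $g$ ever matter. Once this truncation is in place, parts (2) and (3) are formal consequences and involve no further difficulty.
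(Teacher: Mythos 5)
Your proposal is correct and follows essentially the same route as the paper: nilpotency of the maximal ideal makes $f(a)$ a finite sum, the compositional inverse $g$ exists because $f'(0)\neq 0$ (constructed coefficient by coefficient exactly as in the paper's Proposition on the coincidence of $\CC_G^f$ and $\CC_G$), and parts (2) and (3) then follow formally from $x_i=g(y_i)$. If anything, your observation that $L(x_1,\dots,x_n)$ and $L(f^{-1}(y_1),\dots,f^{-1}(y_n))$ are literally the same element of $A$ streamlines the paper's two-directional argument for (3), and your explicit restriction of the map to $m$ is slightly more careful than the paper's claim that every element of $A$ is nilpotent.
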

\begin{proof}
To settle (1) notice that since $A$ is an Artinian algebra, any $a\in A$ is nilpotent
which implies that $f(a)$ is a finite sum. Therefore $f(a)$ is well-defined for any
$a$. The inverse of $f$ has been already constructed in the proof of
Proposition~\ref{prop2}.

Now, (2) is obvious since $f$ is invertible as a map of $A$ and therefore
$x_i=f^{-1}(y_i)$ for every $i$. The first part of (3), claiming  that the
relation $$L(f^{-1}(y_1),\dots, f^{-1}(y_N))=0$$ holds in $A$, is  obvious.
Conversely, assume that a relation $R(y_1,\dots, y_n)$ $=0$  holds in $A$. Then the
equation $R(f(x_1),\dots, f(x_n))=0$ gives a relation in $A$ in terms of the original
generating set $x_1,\dots, x_n$ which then satisfies the claim  because of invertibility of $f$.
\end{proof}

\begin{corollary}\label{cor:relations}
For a graph $G=(V,E)$ and a non-degenerate $f \in \K[[u]]$, we have the isomorphism of $\K$-algebras 
$\CC^f_G \cong \K[y_v\, : \, v\in V]/I_G^f$; the latter is the quotient of the polynomial algebra by the ideal $I_G^f$ generated by two sets
$\{y_v^{\deg v+1} \mid  v\in V\}$
and
$\{  \left(\sum_{v\in  I}f^{-1}(y_v)\right)^{D_I+1}\mid I \subset V, |I| \geq 2 \}$
where $D_I$ has been defined before Theorem~\ref{thm:power}.
 \end{corollary}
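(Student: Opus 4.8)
The plan is to combine Theorem~\ref{thm:power}, which gives presentations of the graded algebra $\CC_G$, with Proposition~\ref{prop:general}, which controls how relations transform under the change of generators $y_v = f(x_v)$. First I would invoke Proposition~\ref{prop2} to recall that, since $f$ is nondegenerate, $\CC^f_G$ and $\CC_G$ coincide as subalgebras of $\Phi_G$; they differ only in their filtrations, which is irrelevant for an isomorphism of $\K$-algebras. Thus it suffices to exhibit $\CC_G$ as the claimed quotient. By Theorem~\ref{thm:power}, $\CC_G \cong \K[x_v : v\in V]/(p_I)_{I\subseteq V}$, where $p_I = \bigl(\sum_{v\in I} x_v\bigr)^{D_I+1}$. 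Here the $x_v$ correspond to the vertex flows $X_v$.

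Next I would apply part~(3) of Proposition~\ref{prop:general} with $A = \CC_G$, generators $x_v = X_v$, and the invertible map $f$. Setting $y_v = f(x_v)$, part~(3) says that a polynomial relation $L(x_v) = 0$ holds in $A$ if and only if $L(f^{-1}(y_v)) = 0$ holds. Applying this to each defining relation $p_I$, the relation $\bigl(\sum_{v\in I} x_v\bigr)^{D_I+1} = 0$ is equivalent to $\bigl(\sum_{v\in I} f^{-1}(y_v)\bigr)^{D_I+1} = 0$. This produces the second family of generators of $I_G^f$ listed in the statement. The restriction $|I|\ge 2$ is harmless: for singletons $I = \{v\}$ one has $D_{\{v\}} = \deg v$ (the number of non-loop edges at $v$), so the relation $p_{\{v\}}$ becomes $f^{-1}(y_v)^{\deg v+1} = 0$; I would argue this is equivalent to $y_v^{\deg v+1} = 0$ because $X_v^{\deg v+1} = 0$ already (Remark~\ref{rem:nilpot}), and $f$ invertible with $f(0)=0$ preserves the exact nilpotency order, giving the first family $\{y_v^{\deg v+1}\}$ directly in terms of $y_v$.

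The crux is then to check that these relations are not merely \emph{satisfied} by the $y_v$ but actually \emph{generate} the full ideal of relations — i.e.\ that the natural surjection $\K[y_v]/I_G^f \twoheadrightarrow \CC^f_G$ is injective. The cleanest route is a dimension count. By part~(2) of Proposition~\ref{prop:general} the $y_v$ generate $\CC^f_G$, so the map is surjective, and $\dim_\K \CC^f_G = \dim_\K \CC_G$ equals the number of spanning forests of $G$ by Theorem~\ref{th:forests}(i). It therefore suffices to show $\dim_\K \K[y_v]/I_G^f \le \dim_\K \CC_G$. Here I would exploit the change of variables itself: the substitution $y_v \mapsto f(x_v)$, being an invertible formal (polynomial) substitution fixing the origin, induces an isomorphism of the completed local rings $\K[[y_v]] \cong \K[[x_v]]$ carrying $I_G^f$ onto the ideal $(p_I)$. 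Since both quotients are supported at the origin (both ideals contain powers of every variable, hence are $\mf m$-primary), we may pass to completions without changing dimension, whence $\dim_\K \K[y_v]/I_G^f = \dim_\K \K[x_v]/(p_I)_I = \dim_\K \CC_G$. Comparing with the surjection forces it to be an isomorphism.

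The step I expect to require the most care is the last one: justifying that the substitution $y_v \mapsto f(x_v)$ genuinely induces an isomorphism of the relevant finite-dimensional quotients rather than just a surjection in one direction. The subtlety is that $f^{-1}$ is a priori a power series, so the generators of $I_G^f$ are power series in the $y_v$; one must confirm (via Remark~\ref{rem:nilpot}) that truncating $f^{-1}$ to a polynomial of degree $\le \md_G$ does not alter the ideal, and that the local automorphism argument transports $\mf m$-primary ideals to $\mf m$-primary ideals with equal colength. Once the dimensions are matched, the isomorphism of $\K$-algebras follows formally.
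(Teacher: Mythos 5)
Your proposal is correct and follows essentially the same route as the paper: verify that the two families of elements are relations (via Theorem~\ref{thm:power} and Proposition~\ref{prop:general}), obtain the surjection $\K[y_v]/I_G^f\twoheadrightarrow \CC^f_G$, and then use invertibility of $f$ to rule out further relations. Your final step --- matching dimensions by transporting $I_G^f$ to $(p_I)$ under the automorphism $y_v\mapsto f(x_v)$ of the completed local ring and noting both ideals are $\mf m$-primary --- is just a more explicit justification of the paper's terse closing sentence that extra relations ``would contradict Theorem~\ref{thm:power}.''
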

\begin{proof}
 As we have already seen, $X_v^{\deg v + 1} = 0$ which implies that
$Y_v^{\deg v + 1} = (f(X_v))^{\deg v + 1} = 0$
by plugging directly into the formula for $f(u)$.
By Proposition~\ref{prop:general} and Theorem~\ref{thm:power},  
$\left(\sum_{v\in I}f^{-1}(Y_v)\right)^{D_I+1}$ are still relations in $\CC^f_G$. 
Thus by mapping $y_i\mapsto Y_i$ we see that  $\CC^f_G$ is 
a homomorphic image of $\K[y_v\, : \, v\in V]/I_G^f$. 
Due to invertibility of $f(u)$, there are no other relations (i.e., the kernel of the homomorphism is zero) as this would contradict Theorem~\ref{thm:power}.
\end{proof}

In view of Proposition~\ref{prop2}, we can view each $f$ as a certain choice of filtration
on a fixed algebra $\CC_G$. We can always recover the original algebra by taking
the associated graded ring with respect to the distinguished ideal.

\begin{corollary}\label{cor: ass graded}
Let $f$ be a nondegenerate polynomial and let $J$ be the ideal
of the algebra $\CC_G^f$ generated by the elements $Y_v=f(X_v)$, $v\in V$.
Then the associated graded algebra $\mathrm{Gr}_J(\CC_G^f)$ for the $J$-adic filtration is isomorphic
to the graded algebra $\CC_G$. In particular, the Hilbert series of $\CC^f_G$ and
$\CC_G$ coincide.
\end{corollary}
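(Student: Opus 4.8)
The plan is to construct an explicit surjection $\CC_G \twoheadrightarrow \gr_J(\CC_G^f)$ of graded algebras and then upgrade it to an isomorphism by a dimension count. Throughout I would lean on Proposition~\ref{prop2}, which identifies $\CC_G^f$ with $\CC_G$ as unfiltered subalgebras of $\Phi_G$; in particular $\dim_\K \CC_G^f = \dim_\K \CC_G$, and both are finite-dimensional local Artinian algebras with maximal ideal $J = (Y_v : v\in V) = (X_v : v\in V)$, the last equality holding because $X_v = f^{-1}(Y_v)\in J$ while $Y_v = f(X_v)\in (X_v)$. Thus the $J$-adic filtration is the filtration by powers of the maximal ideal, and $\gr_J(\CC_G^f)=\bigoplus_{k\ge 0} J^k/J^{k+1}$ is a finite-dimensional graded algebra generated in degree $1$ by the classes $\overline{Y_v}\in J/J^2$.

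The key observation is that $\overline{Y_v}=\overline{X_v}$ in $J/J^2$: normalizing $f(u)=u+a_2u^2+\cdots$, we get $X_v-Y_v=f^{-1}(Y_v)-Y_v\in J^2$ since $f^{-1}(u)-u$ has no constant or linear term. First I would define a homomorphism $\phi\colon \CC_G\to \gr_J(\CC_G^f)$ from the presentation $\CC_G=\K[x_v]/(p_I)$ of Theorem~\ref{thm:power} by $x_v\mapsto \overline{Y_v}$. To see this is well defined, I must check that each relation $p_I=(\sum_{v\in I}x_v)^{D_I+1}$ maps to zero, i.e.\ that $\big(\sum_{v\in I}\overline{Y_v}\big)^{D_I+1}=0$ in $\gr^{D_I+1}=J^{D_I+1}/J^{D_I+2}$. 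Writing $s=\sum_{v\in I}X_v\in J$ and $j=\sum_{v\in I}(Y_v-X_v)\in J^2$, this class is the image of $(s+j)^{D_I+1}$; expanding $(s+j)^{D_I+1}=\sum_{m=0}^{D_I+1}\binom{D_I+1}{m}s^{D_I+1-m}j^m$ and using $s^{D_I+1}=p_I=0$ in $\CC_G=\CC_G^f$, every surviving term ($m\ge 1$) lies in $J^{(D_I+1-m)+2m}\subseteq J^{D_I+2}$. Hence the class vanishes and $\phi$ is a well-defined homomorphism of graded algebras.

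This leading-term computation is the main technical point and the step I expect to require the most care. Phrased via Corollary~\ref{cor:relations}, it amounts to checking that the initial (lowest-degree) form of the deformed relation $\big(\sum_{v\in I}f^{-1}(y_v)\big)^{D_I+1}$ is exactly $\big(\sum_{v\in I}y_v\big)^{D_I+1}$, with no unexpected cancellation dropping its order; the computation above shows this is controlled entirely by $\overline{Y_v}=\overline{X_v}$ together with the graded relation $s^{D_I+1}=0$, so no such cancellation occurs.

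It remains to see that $\phi$ is an isomorphism. It is surjective because $\gr_J(\CC_G^f)$ is generated in degree $1$ by the classes $\overline{Y_v}$, all of which lie in the image. For injectivity I would compare dimensions: the associated graded of a finite filtration has the same total dimension as the filtered algebra, so $\dim_\K \gr_J(\CC_G^f)=\dim_\K \CC_G^f=\dim_\K \CC_G$, the last equality again by Proposition~\ref{prop2}. A surjection of finite-dimensional $\K$-spaces of equal dimension is bijective, so $\phi$ is an isomorphism of graded algebras, giving $\gr_J(\CC_G^f)\cong \CC_G$. The final assertion then follows at once, since isomorphic graded algebras have identical Hilbert series.
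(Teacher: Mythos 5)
Your proposal is correct and follows essentially the same route as the paper: both produce a surjection $\CC_G\twoheadrightarrow \mathrm{Gr}_J(\CC_G^f)$ by checking that the initial forms of the deformed relations are exactly the relations $\bigl(\sum_{v\in I}Y_v\bigr)^{D_I+1}$ of $\CC_G$, and then conclude by the dimension equality $\dim\gr(\CC_G^f)=\dim\CC_G^f=\dim\CC_G$ from Proposition~\ref{prop2}. Your explicit expansion of $(s+j)^{D_I+1}$ with $j\in J^2$ just spells out the leading-term claim that the paper states more briefly.
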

\begin{proof}
In the notation of Corollary~\ref{cor:relations}, the associated graded
algebra  $\CC^f_G$ is the quotient of $\K[Y_v]$ by the initial ideal of $I_G^f$,
 i.e., the ideal generated by the lowest degree homogeneous forms of elements
$f \in I_G^f$. Clearly, $Y_v^{\deg v+1}, \ v\in V,$
are contained in the initial ideal and the expression
      $\left(\sum_{v\in I}f^{-1}(Y_v)\right)^{D_I+1}$
will contribute the homogeneous form arising from the linear part of $f^{-1}$,
i.e., $\left(\sum_{v\in I} Y_v\right)^{D_I+1}$.
These forms are the defining relations of the algebra $\CC_G$ and there are no other
relations because $\dim \gr (\CC^f_G) =\dim \CC^f_G = \dim \CC_G$ by Proposition~\ref{prop2} and properties of associated graded algebras.
Thus the graded algebras $\mathrm{Gr}_J(\CC_G^f)$ and $\CC_G$ are isomorphic.
\end{proof}

Several concrete examples of such relations can be found later in the text.

\begin{remark}
The relation $\ds\sum_{v\in V}f^{-1}(Y_v)=0$ corresponding to $I=V$
in Corollary~\ref{cor:relations}
can be used to remove half of the   
generators of the ideal $I_G^f$.
Namely, for a subset $I\subset V$ with $1 <|I|< |V| $, consider the
complementary subset $I^c=V- I$. If $|I|< |I^c|$, then keep the generator
$\left(\sum_{v\in I}f^{-1}(Y_v)\right)^{D_I+1}$ corresponding to $I$ and remove the one corresponding to $I^c$.
If $|I^c|> |I|$, then keep the generator corresponding to $I^c$
and remove the one corresponding to $I$. Finally, if $|I|=|I^c|$ (which can
happen only if $|V|$ is even), then keep any of these two generators and remove
the other one. The generator corresponding to $I=V$ should not be removed.
\end{remark}

\subsection {Stratification of the space of deformed zonotopal algebras of a graph}\label{sec:funct}
Let $\cA_G$ be the affine space~\eqref{eq:poly-space} of parameters of algebras
$\CC_G^f$ for a graph $G$.
Our ambition is to study the stratification of $\cA_G$ according to the \HS
sequences $\hs_G^f$ of $\CC^f_G$.

Since the generators $Y_v, \ v\in V,$ of the algebra $\CC^f_G$ are nilpotent,
the corresponding \HS sequence $\hs_G^f$ has finitely many non-zero terms.
We denote by $\hsb_G^f$ the finite sequence obtained by removing all zero terms
of $\hs_G^f$.
\begin{definition}
Given a graph $G$ and a sequence $M=(1,m_1,m_2,\dots, m_n)$ of positive integers,
the {\it associated \HS  stratum} is the subset $S_G^M$ of the parameter space $\cA_G$
consisting of all $f$ such that $\hsb_G^f= M$.
\end{definition}
We will show that each $S_G^M$ is a constructible algebraic set and we will be
interested in the decomposition of its closure into irreducible closed algebraic
components. (Observe that  $S_G^{M}$ might be empty.)
   Let us describe the adjacency of these strata.

\subsection{Semicontinuity}
First recall the following notion.

\begin{definition}
 Let $X$ be a topological space and let
 $(\Lambda, \prec)$ be a partially ordered set.
We say that a function $g\colon X \to \Lambda$ is upper semicontinuous if for
every $\lambda \in \Lambda,$ the set
\[
g^{-1} (\prec \lambda) := \{x \in X \mid g(x) \prec \lambda\}
\]
is open.
\end{definition}

\begin{lemma}\label{l linear algebra semi}
Let $A$ be a commutative ring and $M$ be a matrix with entries in $A$.
For a prime $\mf p \in A$, let $M(\mf p)$ be the matrix obtained from $M$ by
replacing the entries by their images in $k(\mf p)$.
Then the real-valued function $\mf p \mapsto \rank M(\mf p)$ (respectively,
$\mf p \mapsto \dim \ker M(\mf p)$) is  lower (resp., upper) semicontinuous.
\end{lemma}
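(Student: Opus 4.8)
The plan is to prove both semicontinuity statements simultaneously by reducing everything to the vanishing of minors, which is the standard engine behind rank stratifications. Since $\dim \ker M(\mf p) = n - \rank M(\mf p)$ where $n$ is the number of columns of $M$ (a quantity independent of $\mf p$), the two claims are equivalent: the set where the kernel dimension is small is exactly the set where the rank is large, so it suffices to establish lower semicontinuity of the rank and deduce the upper semicontinuity of the kernel dimension as a formal consequence.

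To prove that $\mf p \mapsto \rank M(\mf p)$ is lower semicontinuous, I would first recall that over the field $k(\mf p)$ one has the characterization
\[
\rank M(\mf p) \geq r \iff \text{some } r\times r \text{ minor of } M(\mf p) \text{ is nonzero.}
\]
Each $r \times r$ minor of $M$ is a fixed element $m \in A$, obtained as a polynomial (with integer coefficients) in the entries of $M$; its image in $k(\mf p)$ is the corresponding minor of $M(\mf p)$, and this image is nonzero precisely when $m \notin \mf p$. Therefore the locus where a given minor $m$ survives is the basic open set $D(m) = \{\mf p \mid m \notin \mf p\} \subset \Spec A$. Taking the union over all $r \times r$ minors $m_1, \dots, m_s$, I get
\[
\{\mf p \mid \rank M(\mf p) \geq r\} = \bigcup_{i=1}^{s} D(m_i),
\]
which is open in the Zariski topology as a finite union of basic opens.

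To finish, I would translate this into the semicontinuity definition given above. The partially ordered set here is $(\N, \leq)$ and lower semicontinuity of $\rank$ means that $\{\mf p \mid \rank M(\mf p) \succ \lambda\}$ is open, equivalently that the superlevel sets $\{\rank M(\mf p) \geq r\}$ are open for every $r$ — which is exactly what the minor computation above gives. For the kernel statement, upper semicontinuity requires that $\{\mf p \mid \dim \ker M(\mf p) \prec \lambda\} = \{\mf p \mid \dim \ker M(\mf p) < \lambda\}$ be open; using $\dim \ker M(\mf p) = n - \rank M(\mf p)$, this set equals $\{\mf p \mid \rank M(\mf p) > n - \lambda\}$, again an open superlevel set of the rank.

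The argument is essentially soft, so there is no single hard obstacle; the only point demanding care is the compatibility claim that the image in $k(\mf p)$ of an $r \times r$ minor of $M$ coincides with the corresponding minor of $M(\mf p)$. This holds because the determinant is a polynomial expression in the matrix entries and the reduction map $A \to k(\mf p)$ is a ring homomorphism, hence commutes with sums and products; this is the step I would state explicitly rather than leave implicit, since it is what licenses passing from minors of $M$ (elements of $A$) to minors of $M(\mf p)$ (elements of $k(\mf p)$).
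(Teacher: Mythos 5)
Your proposal is correct and follows the same route as the paper: both reduce lower semicontinuity of the rank to the fact that non-vanishing of an $r\times r$ minor is an open condition ($\det B(\mf p)\neq 0$ iff $\det B\notin\mf p$), expressing the superlevel set $\{\rank M(\mf p)\geq r\}$ as a union of basic opens, and both treat the kernel statement via $\dim\ker M(\mf p)=n-\rank M(\mf p)$. Your write-up merely makes explicit the compatibility of minors with the reduction map $A\to k(\mf p)$, which the paper leaves implicit.
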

\begin{proof}
We use the fact that non-vanishing of a minor is an open condition.
Namely, if $B$ is a square matrix then $\det B(\mf p) \neq 0$ if and only if
$\det B \notin \mf p$.
Hence, for any $\mf p$ such that $\rank M(\mf p) \geq r$, there is an open
neighborhood where the same condition holds.
\end{proof}

\begin{theorem}\label{thm semi}
Let $A$ be a commutative ring and $R=A[X_1, \ldots, X_N]/I$
be a finite $A$-module. Then
\begin{enumerate}
\item the function $\mf p \mapsto \dim_{k(\mf p)} R \otimes_A k(\mf p)$ is upper
  semicontinuous on $\Spec R$;
\item  for any positive integer $m$,
 $H_m \colon \mf p \mapsto \dim_{k(\mf p)}
  \langle X_1^{\alpha_1}\cdots X_N^{\alpha_N} \mid \alpha_1 + \cdots + \alpha_n
  \leq m  \rangle$,
where the latter is a submodule of $R \otimes_A k(\mf p)$,
is also a lower semicontinuous function on $\Spec A$;
\item for $\mf p \subset \mf q,$ we have the equality $H_m(\mf p) = H_m(\mf q)$
  if and only if
$\langle X_1^{\alpha_1}\cdots X_N^{\alpha_N} \mid \alpha_1 + \cdots + \alpha_n
\leq m  \rangle \otimes_A A/\mf p_\mf q$
is a free $\otimes_A A/\mf p_\mf q$-module.
\end{enumerate}
\end{theorem}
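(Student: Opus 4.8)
The plan is to treat all three parts through the standard framework of semicontinuity for finitely generated modules over a Noetherian (or at least coherent) base, reducing everything to the rank/kernel semicontinuity already recorded in Lemma~\ref{l linear algebra semi}. The key observation is that each quantity in question is the fiber dimension of some finitely generated $A$-module, and fiber dimension can be read off as the corank of a presentation matrix whose entries lie in $A$.

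For part (1), I would note that $R$ is a finite $A$-module, hence choosing a finite generating set gives a presentation $A^s \xrightarrow{M} A^t \to R \to 0$ with $M$ a matrix over $A$. Right-exactness of $\otimes_A k(\mf p)$ yields $A^s \xrightarrow{M(\mf p)} A^t \to R\otimes_A k(\mf p)\to 0$, so $\dim_{k(\mf p)} R\otimes_A k(\mf p) = t - \rank M(\mf p)$. Since $\rank M(\mf p)$ is lower semicontinuous by Lemma~\ref{l linear algebra semi}, its negative, and hence the fiber dimension, is upper semicontinuous. This is Nakayama-type fiber-dimension semicontinuity, and it is clean.

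Part (2) is the same idea applied to the submodule $R_m := \langle X^{\alpha} : |\alpha|\le m\rangle \otimes_A k(\mf p)$, but here I expect the main obstacle. The subtlety is that the submodule generated by a fixed finite set of monomials, \emph{after} tensoring, is not literally the tensor product of a fixed $A$-submodule of $R$ unless one is careful about whether forming the submodule commutes with base change. The correct move is to express $\dim_{k(\mf p)} R_m$ as the rank of the $k(\mf p)$-linear map from the free module on the chosen monomial symbols into $R\otimes_A k(\mf p)$; that map is induced by a matrix $N$ over $A$ (write each monomial $X^\alpha$ in terms of the generators of $R$), so $\dim R_m = \rank N(\mf p)$, which is lower semicontinuous by the lemma. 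I would also remark that the statement as phrased refers to $\Spec A$ while part (1) refers to $\Spec R$; since $R_m$ is a quotient presentation over $A$, the natural home is $\Spec A$, and I would restrict attention there.

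For part (3), the plan is to combine the semicontinuity of $H_m$ with the standard criterion that a finitely generated module over a local ring is free if and only if its minimal number of generators equals its rank, i.e.\ the presentation matrix has no relations over the residue field. Given the inclusion $\mf p\subset \mf q$, I would localize at $\mf q$ and work over the local ring $A_{\mf q}/\mf p_{\mf q}$ (here $\mf p_{\mf q}$ denotes the extension of $\mf p$), whose residue field is $k(\mf q)$ and whose fraction field is $k(\mf p)$. The equality $H_m(\mf p)=H_m(\mf q)$ says precisely that the generic and special fiber dimensions of the module $R_m\otimes_A A/\mf p_{\mf q}$ agree; by the local criterion for freeness (a finite module over a Noetherian local domain whose generic and closed fiber dimensions coincide is free), this equality is equivalent to freeness. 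The hard part will be stating the hypotheses cleanly enough that the local-freeness criterion applies, namely ensuring $A/\mf p_{\mf q}$ is a local domain (which it is, being a localization of the domain $A/\mf p$) and that the module is finitely generated; once those are in place, the equivalence is the familiar ``constant fiber dimension $\iff$ locally free'' principle.
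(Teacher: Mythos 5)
Parts (1) and (3) of your proposal follow the paper's proof essentially verbatim: a finite presentation $A^{\oplus s}\xrightarrow{M}A^{\oplus t}\to R\to 0$ plus Lemma~\ref{l linear algebra semi} for (1), and for (3) the criterion that a finite module over the local domain $A_{\mf q}/\mf p A_{\mf q}$ (residue field $k(\mf q)$, fraction field $k(\mf p)$) is free if and only if its generic rank equals its minimal number of generators. Your remark that the semicontinuity in (1) lives on $\Spec A$ rather than $\Spec R$ is correct; that is a typo in the statement.

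The gap is in part (2), and it sits exactly at the subtlety you flagged before proposing your fix. The identity $\dim_{k(\mf p)}R_m=\rank N(\mf p)$ is false in general: the matrix $N$ obtained by writing each monomial $X^\alpha$ in terms of a chosen generating set of $R$ is a map $A^{\oplus(\text{monomials})}\to A^{\oplus t}$ into the free cover, not into $R$, so $\rank N(\mf p)$ measures the span of its columns inside $k(\mf p)^{\oplus t}$, whereas $H_m(\mf p)$ measures that span inside the quotient $k(\mf p)^{\oplus t}/\Im M(\mf p)$. The two differ by $\dim\bigl(\Im N(\mf p)\cap\Im M(\mf p)\bigr)$. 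For instance, with $A=\K[t]$ and $R=A[X]/(X^2,tX)$, generators $1,X$ and $m=1$, your formula gives $2$ at every prime while the span of $\{1,X\}$ in $R\otimes_A k(\mf p)$ has dimension $1$ at the generic point. Thus Lemma~\ref{l linear algebra semi} cannot be applied to $N$ alone; the honest expression is $H_m(\mf p)=\rank[N\mid M](\mf p)-\rank M(\mf p)$, a difference of two lower semicontinuous functions, which is not automatically semicontinuous in either direction. The paper avoids this by choosing the $A$-module generators of $R$ to be monomials of bounded degree in the first place, so that the monomials of degree at most $m$ span a coordinate, hence free and constant-rank, submodule $V_m\subset A^{\oplus t}$; then $H_m(\mf p)=\rank_A V_m-\dim\bigl(\Im M(\mf p)\cap V_m(\mf p)\bigr)$ is a constant minus a kernel-type quantity attached to the matrix $[M\mid -V_m]$, to which the lemma does apply. (Note that even this reduction silently uses that $\dim\ker M(\mf p)$ is constant, i.e.\ that $R$ is $A$-flat; this holds in the intended application because every fiber of $\C_G^{[f]}$ has dimension $\dim\C_G$, and any repaired write-up of (2) should make that hypothesis explicit.)
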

\begin{proof}
Since $R$ is a finite $A$-module, it can be generated (as a module) by monomials
of bounded degree. We fix such system of generators of $R$ to define its
presentation as an $A$-module in the form:
\[
A^{\oplus m} \xrightarrow{M} A^{\oplus n} \to R \to 0.
\]
It is now clear that
$\dim_{k(\mf p)} R \otimes_A k(\mf p) = n - \rank M(\mf p)$,
so the first claim follows from Lemma~\ref{l linear algebra semi}.

For the second claim, we take $V_k$ to be a free submodule of $A^{\oplus n}$
corresponding to monomials of degree at most $k$.
Then
\[
\dim_{k(\mf p)} \langle X_1^{\alpha_1}\cdots X_N^{\alpha_N} \mid \alpha_1 +
\cdots + \alpha_n \leq k  \rangle
=
\rank_A
V_k - \dim_{k(\mf p)} \Im M(\mf p) \cap V_k(\mf p).
\]
By a standard linear algebra argument
$$
\dim_{k(\mf p)} (\Im M(\mf p)) \cap V_k(\mf p) = \dim_{k(\mf p)} \ker [M \mid
-V_k](\mf p)
$$
which is an upper semicontinuous function by Lemma~\ref{l linear algebra semi}.

Last, observe that for any finite $A$-module $M$, the equality
$\dim_{k(\mf p)} M \otimes_A k(\mf p)= \dim_{k(\mf q)} M \otimes_A k(\mf q)$
is equivalent to $M \otimes_A A/\mf p_\mf q$
being $A/\mf p_\mf q$-free since the generic rank and the minimal number
of generators of this module have to be equal.
\end{proof}

\begin{corollary}
Let $A$ be a commutative ring and $R=A[X_1, \ldots, X_N]/I$
be a finite $A$-module. Suppose that $R$ is generated (as a module) by monomials
of degree at most $D$ and set $\Lambda = \mathbb Z^{\oplus D + 1}$ with the
lexicographic order.
In the notation of Theorem~\ref{thm semi}, define the function
$H\colon \Spec A \to \Lambda, \mf p \mapsto (H_0, H_1, \ldots, H_D)$.
Then $H$ is lower semicontinuous.
Moreover, for any vector $\lambda = (\lambda_0, \lambda_1, \ldots)$,  the set
\[
H^{-1} (\lambda) = \{\mf p \in \Spec A \mid H(\mf p ) = \lambda\}
\]
is the intersection of an open set $H^{-1} (\succeq_{lex} \lambda)$ and a closed
set $H^{-1} (\preceq_{lex} \lambda)$.

In particular, the closure of $H^{-1} (\lambda)$ coincides with
$H^{-1} (\preceq_{lex} \lambda)$.
\end{corollary}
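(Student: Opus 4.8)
The plan is to reduce the statement to a combinatorial fact about the finitely many integer-valued coordinates $H_0,\dots,H_D$ and then invoke the behaviour of constructible sets under specialization. First I would record that each $H_m$ is bounded: since $R$ is generated by $n$ monomials, $H_m(\mf p)\le \dim_{k(\mf p)} R\otimes_A k(\mf p)\le n$ for every prime $\mf p$, so $H$ takes values in the finite set $\{0,\dots,n\}^{D+1}\subset\Lambda$. Next I would strengthen the openness coming from Theorem~\ref{thm semi}: inspecting the proof of Lemma~\ref{l linear algebra semi}, the locus $\{\mf p : \rank M(\mf p)\ge r\}$ is the \emph{finite} union $\bigcup_B (\Spec A\setminus V(\det B))$ over the $r\times r$ submatrices $B$ of $M$, hence a quasi-compact open, i.e.\ constructible. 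Since every $H_m$ is expressed through ranks of the fixed matrices $M$ and $[\,M\mid -V_k\,]$, each level set $\{H_m=c\}$ is constructible, and therefore so is every Boolean combination of them; in particular $H^{-1}(\succeq_{lex}\lambda)$ and $H^{-1}(\preceq_{lex}\lambda)$ are constructible.

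The second step converts lower semicontinuity of the individual $H_m$ into a statement about $H$ with the lexicographic order. Lower semicontinuity of $H_m$ (Theorem~\ref{thm semi}(2)) means $\{H_m>c\}$ is open, hence stable under generization, so for a specialization $\mf p\subseteq\mf q$ one gets $H_m(\mf q)\le H_m(\mf p)$ for all $m$. Coordinatewise domination implies domination in the lexicographic order, so $H(\mf q)\preceq_{lex}H(\mf p)$. Consequently $H^{-1}(\preceq_{lex}\lambda)$ is stable under specialization and $H^{-1}(\succeq_{lex}\lambda)$ under generization. Now I would invoke the standard fact that a constructible subset of the spectral space $\Spec A$ is closed if and only if it is stable under specialization, and open if and only if it is stable under generization. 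This immediately yields that $H^{-1}(\succeq_{lex}\lambda)$ is open and $H^{-1}(\preceq_{lex}\lambda)$ is closed, and the set-theoretic identity $H^{-1}(\lambda)=H^{-1}(\succeq_{lex}\lambda)\cap H^{-1}(\preceq_{lex}\lambda)$ is tautological. Lower semicontinuity of $H$ itself then follows, because $H$ has finitely many values and $\{H\succ_{lex}\lambda\}$ equals $H^{-1}(\succeq_{lex}\mu)$ for the smallest value $\mu\succ_{lex}\lambda$ actually attained (and is empty otherwise).

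For the last assertion, the inclusion $\overline{H^{-1}(\lambda)}\subseteq H^{-1}(\preceq_{lex}\lambda)$ is immediate since the right-hand side is closed and contains $H^{-1}(\lambda)$. The reverse inclusion, i.e.\ density of the stratum $\{H=\lambda\}$ inside $H^{-1}(\preceq_{lex}\lambda)$, is the main obstacle, and I expect it to require more than mere semicontinuity: two incomparable closed points carrying values $\mu\prec_{lex}\lambda$ and $\lambda$ both lie in $H^{-1}(\preceq_{lex}\lambda)$, yet the point with value $\mu$ need not lie in the closure of $\{H=\lambda\}$, so the equality cannot hold for a completely arbitrary $A$ and $\lambda$. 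What rescues it in the situation of interest is irreducibility of the ambient space. When $\Spec A$ is irreducible with generic point $\eta$ — exactly the case relevant to Proposition~\ref{pr-hil}, where $A$ is the coordinate ring of the affine space $\cA_G$ and $\lambda=H(\eta)=\hs_G$ is the lex-maximal (general) value — the open set $U=H^{-1}(\succeq_{lex}\lambda)$ coincides with $H^{-1}(\lambda)$, is nonempty, and is therefore dense, while $H^{-1}(\preceq_{lex}\lambda)$ is all of $\Spec A$, giving equality. For a general value $\lambda$ the plan is to run the same argument on each irreducible component $Z$ of $H^{-1}(\preceq_{lex}\lambda)$ whose generic point has value exactly $\lambda$: on such a $Z$ the stratum is a nonempty open, hence dense, subset, and $\overline{H^{-1}(\lambda)}$ is the union of these components. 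Reconciling this component-by-component description with the blanket formula $H^{-1}(\preceq_{lex}\lambda)$ is the delicate point, and it is precisely where the hypothesis that $\lambda$ is generically attained on the whole irreducible ambient space must be used.
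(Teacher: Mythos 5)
Your treatment of the first two assertions is correct but takes a genuinely different route from the paper. The paper argues purely combinatorially: it writes $H^{-1}(\succ_{lex}\lambda)$, $H^{-1}(\succeq_{lex}\lambda)$ and $H^{-1}(\preceq_{lex}\lambda)$ as explicit finite unions of finite intersections of the coordinate sets $H_k^{-1}(\succeq\lambda_k)$, $H_k^{-1}(\succ\lambda_k)$ and their complements, each of which is open (resp.\ closed) by Theorem~\ref{thm semi} and Lemma~\ref{l linear algebra semi}; no constructibility or specialization machinery is invoked. You instead observe that the rank loci are finite unions of basic opens $D(\det B)$, hence that all relevant sets are constructible, derive $H(\mf q)\preceq_{lex}H(\mf p)$ for any specialization $\mf p\subseteq\mf q$ from coordinatewise semicontinuity, and then apply the criterion that a constructible subset of the spectral space $\Spec A$ is closed iff stable under specialization and open iff stable under generization. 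Both arguments are valid; the paper's is more elementary, while yours isolates the genuinely useful intermediate fact (monotonicity of $H$ along specializations) and makes explicit the quasi-compactness needed for the non-Noetherian constructibility argument.

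Your scepticism about the final clause is justified, and this is the one place where your reading improves on the paper, which offers no argument for $\overline{H^{-1}(\lambda)}=H^{-1}(\preceq_{lex}\lambda)$. Only the inclusion $\overline{H^{-1}(\lambda)}\subseteq H^{-1}(\preceq_{lex}\lambda)$ is formal; the reverse inclusion fails in general, trivially when $H^{-1}(\lambda)=\emptyset$ while $H^{-1}(\prec_{lex}\lambda)\neq\emptyset$, and more seriously even on an irreducible base for non-generic $\lambda$. Indeed, the paper's own $K_4$ computation furnishes a counterexample to the downstream Corollary~\ref{cor constructive}: the stratum of $u+bu^2$ has closure the $b$-axis, yet the lex-smaller stratum of $u+cu^3$ also lies in $S_G^{\preceq M}$ but not in that closure. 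Your proposed scope for the equality --- $\Spec A$ irreducible and $\lambda$ the value at the generic point, which is exactly the case needed for Proposition~\ref{pr-hil} --- is the correct repair; your component-by-component extension to other $\lambda$ only describes $\overline{H^{-1}(\lambda)}$ and cannot recover the blanket identity with $H^{-1}(\preceq_{lex}\lambda)$, because that identity is simply not true.
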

\begin{proof}
For the next claim, we first note that we may stop at $H_D$.
Since each function $H_k$ is discrete, the set
 $H_k^{-1} (\succeq a) = H_k^{-1} (\succ  a - 1)$ is open.
Thus if we set $\lambda = (\lambda_0, \ldots, \lambda_D)$, we may decompose
\begin{align*}
H^{-1} (\succ_{lex} \lambda) = &H_0^{-1} (\succ  \lambda_0) \bigcup H_0^{-1}
   (\succeq  \lambda_0) \cap H_1^{-1} (\succ \lambda_1) \bigcup \\
&H_0^{-1} (\succeq  \lambda_0) \cap H_1^{-1} (\succeq  \lambda_1) \cap H_2^{-1}
  (\succ  \lambda_2) \bigcup \cdots \bigcup \\
&H_0^{-1} (\succeq  \lambda_0) \cap \cdots \cap H_{D - 1}^{-1} (\succeq
  \lambda_{D-1}) \cap H_{D}^{-1} (\succ  \lambda_{D})
\end{align*}
and see that $H^{-1} (\succ_{lex} \lambda) $ is open.
Similarly,  replacing in the last line $H_{D}^{-1} (\succ  \lambda_{D})$ with
$H_{D}^{-1} (\succeq  \lambda_{D})$ we see that $H^{-1} (\succeq_{lex} \lambda)$
is also open.
It remains to note that $H^{-1} (\preceq_{lex} \lambda)$ is closed because it
can be decomposed as:
\begin{align*}
  H^{-1} (\preceq_{lex} \lambda) = &H_0^{-1} (\prec \lambda_0) \bigcup
   H_0^{-1}(\preceq  \lambda_0) \cap H_1^{-1} (\prec   \lambda_1) \bigcup \cdots\\
       & \bigcup H_0^{-1} (\preceq  \lambda_0) \cap \cdots \cap
        H_{D - 1}^{-1} (\preceq  \lambda_{D-1}) \cap H_{D}^{-1} (\prec  \lambda_{D}).
\end{align*}
\end{proof}

\begin{corollary}\label{cor constructive}
The stratum $S_G^{M}$ is constructive and, in the lexicographic partial order,
we have $\overline{S_G^{M}} = S_G^{\preceq M}$.
\end{corollary}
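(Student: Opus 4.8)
The plan is to recognize the stratification of $\cA_G$ as a special case of the abstract semicontinuity established in the preceding corollary, applied to the coordinate ring of the parameter space and a universal family of deformed zonotopal algebras living over it. Concretely, I would write a generic parameter as $f=u+a_2u^2+\dots+a_du^d$ with $d=\md_G$ and identify $\cA_G$ with $\Spec A$ for $A=\K[a_2,\dots,a_d]$. Because $f'(0)=1$, the compositional inverse $f^{-1}$ (truncated as in Remark~\ref{rem:nilpot}) has coefficients that are genuine polynomials in $a_2,\dots,a_d$, hence lie in $A$; let $F^{-1}\in A[u]$ denote the resulting series. Imitating Corollary~\ref{cor:relations}, I set $R:=A[y_v:v\in V]/\mathcal I$, where $\mathcal I$ is generated by the elements $y_v^{\deg v+1}$ together with $\bigl(\sum_{v\in I}F^{-1}(y_v)\bigr)^{D_I+1}$ for $I\subseteq V$. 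By construction, base change along a $\K$-point $\mf p\in\cA_G$ corresponding to a polynomial $f$ yields $R\otimes_A k(\mf p)\cong\CC^f_G$ via $y_v\mapsto Y_v$, again by Corollary~\ref{cor:relations}.

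Next I would verify the hypotheses of the preceding corollary and match the invariants. Since $y_v^{\deg v+1}\in\mathcal I$, the module $R$ is spanned over $A$ by the finitely many monomials $\prod_v y_v^{k_v}$ with $0\le k_v\le\deg v$; thus $R$ is a finite $A$-module generated in degree at most $D:=\sum_{v\in V}\deg v$, and we take $\Lambda=\Z^{\oplus D+1}$ with the lexicographic order. For a $\K$-point $\mf p\leftrightarrow f$ the quantity $H_m(\mf p)$ of Theorem~\ref{thm semi} equals $\dim_\K\CC^{f,m}_G$, because the filtration piece $\CC^{f,m}_G$ is precisely the span of the monomials of degree at most $m$ in the $Y_v$. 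Hence the vector $H(\mf p)=(H_0(\mf p),\dots,H_D(\mf p))$ records the partial sums of the Hilbert sequence $\hs^f_G$.

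It then remains to translate the lexicographic orders and conclude. One first checks that the filtration stabilizes as soon as it stalls: if $\CC^{f,j}_G=\CC^{f,j-1}_G$, then every degree $j+1$ monomial is a product of a generator with a degree $j$ monomial and so lies in $\CC^{f,j}_G$, whence the filtration is constant from $j$ onward; consequently the nonzero entries of $\hs^f_G$ form an initial segment, so $\hsb^f_G$ and $\hs^f_G$ determine one another. As all these sequences share the fixed total $\dim\CC_G$ (Corollary~\ref{cor: ass graded}), passing to partial sums is an isomorphism for the lexicographic order: the first index where two Hilbert sequences differ is the first index where their partial sums differ. Thus there is a vector $\lambda_M\in\Lambda$ with $S_G^{M}=H^{-1}(\lambda_M)$ and $S_G^{\preceq M}=H^{-1}(\preceq_{lex}\lambda_M)$. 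Invoking the preceding corollary, $H^{-1}(\lambda_M)$ is the intersection of the open set $H^{-1}(\succeq_{lex}\lambda_M)$ with the closed set $H^{-1}(\preceq_{lex}\lambda_M)$, hence locally closed and therefore constructible, and its closure is $H^{-1}(\preceq_{lex}\lambda_M)$. Restricting these scheme-theoretic statements on $\Spec A$ to the dense set of closed points gives that $S_G^{M}\subseteq\cA_G$ is constructible with $\overline{S_G^{M}}=S_G^{\preceq M}$.

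I expect the main obstacle to be the construction of the universal family and the accompanying fiberwise dictionary: one must confirm that $F^{-1}$ really has coefficients in $A$ (this relies on the normalization $f'(0)=1$, without which inversion introduces denominators) and that base change along each point recovers $\CC^f_G$ together with its monomial filtration, so that $H_m(\mf p)=\dim_\K\CC^{f,m}_G$ on the nose. Once this dictionary is secured, together with the elementary observation that the filtration stabilizes after its first gap, the topological conclusions follow immediately from the preceding corollary.
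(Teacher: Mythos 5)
Your proposal is correct and follows essentially the same route as the paper: both pass to the universal family over $A=\K[T_2,\dots,T_d]$ obtained by treating the coefficients of $f$ as indeterminates, observe that $f^{-1}$ has coefficients polynomial in these (thanks to $f'(0)=1$), and then apply Theorem~\ref{thm semi} and its corollary to the resulting finite $A$-module, identifying $\cA_G$ with the $\K$-points of $\Spec A$. Your additional checks — that $H_m$ computes the partial sums of $\hs^f_G$, that the filtration stabilizes after its first gap so $\hsb^f_G$ determines $\hs^f_G$, and that lexicographic comparison of partial sums matches that of the sequences — are details the paper leaves implicit, and they are all verified correctly.
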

\begin{proof}
  By Remark~\ref{rem:nilpot},
  polynomial in $\cA_G$ is a specialization of the  generic polynomial
  $f = u +  T_2u^2 + \cdots + T_du^d$, where $d=\md_G$ is the
  maximal degree of   $G$.
For this $f$, we define the algebra $R = \CC^{[f]}_G$ as the
quotient of
$\K[X_v, T_2, \ldots, T_n]$, $v\in V$, by
the relations from Corollary~\ref{cor:relations}. Note that the obvious relation  
$f^{-1}( f(u)) = u$ gives explicit polynomial formulas for the coefficients
of $f^{-1}$ in terms of $T_i$.
Since $R$ is a finitely generated module over
$A = \K[T_2, \ldots, T_d]$,
the theorem applies and we may use
the fact that
$\cA_G$ can be identified with the set
of $\K$-rational points of $\Spec A$.
\end{proof}

Next we introduce a natural $\K^\star$-action on $\cA_G$.

\begin{lemma}\label{lm:action} For any
graph
$G$, the natural $\K^\star$-action  on $\cA_G$ given by
$f(u)\mapsto  \frac{1}{\epsilon}f(\epsilon u)$ with
$\epsilon \in \K\setminus\{0\}$,
 preserves the  Hilbert stratification.
 This action is free on  $\cA_G\setminus \{u\}$ and
the point $u\in \cA_G$  belongs to
the closure of every Hilbert stratum.
\end{lemma}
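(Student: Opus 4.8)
The plan is to realize this $\K^\star$-action as being induced by a one-parameter family of algebra automorphisms of the ambient edge algebra $\Phi_G$, deduce invariance of the stratification from that, and then analyze orbits and their closures directly. First I would check that the formula $\rho_\epsilon\colon f\mapsto \tfrac1\epsilon f(\epsilon u)$ is well defined on $\cA_G$ and genuinely defines an action. Writing $f=u+\sum_{i=2}^{d}a_iu^i$ with $d=\md_G$, one computes $\rho_\epsilon(f)=u+\sum_{i=2}^{d}a_i\epsilon^{i-1}u^i$, which again lies in $\cA_G$ since it preserves $f(0)=0$, $f'(0)=1$ and the degree bound, and $\rho_\delta\circ\rho_\epsilon=\rho_{\delta\epsilon}$ is immediate.

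The key observation is that $\rho_\epsilon$ is implemented by rescaling the edge variables. For $\epsilon\in\K^\star$ let $\psi_\epsilon\colon\Phi_G\to\Phi_G$ be the graded algebra automorphism determined by $\phi_e\mapsto\epsilon\phi_e$; this respects the relations $\phi_e^2=0$, so it is well defined and invertible. Since $\psi_\epsilon(X_v)=\epsilon X_v$ and $\psi_\epsilon$ is an algebra map, $\psi_\epsilon\bigl(f(X_v)\bigr)=f(\epsilon X_v)=\epsilon\cdot\rho_\epsilon(f)(X_v)$. Hence $\psi_\epsilon$ carries each generator $Y_v=f(X_v)$ of $\CC^f_G$ to a nonzero scalar multiple of the corresponding generator of $\CC^{\rho_\epsilon(f)}_G$, so it restricts to an isomorphism $\CC^f_G\to\CC^{\rho_\epsilon(f)}_G$ of subalgebras of $\Phi_G$. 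As the scalars do not affect spans of monomials, $\psi_\epsilon$ maps the filtration piece $\CC^{f,k}_G$ onto $\CC^{\rho_\epsilon(f),k}_G$; being a linear isomorphism it preserves the dimensions of the successive quotients, whence $\hs^f_G=\hs^{\rho_\epsilon(f)}_G$. Thus the action preserves the Hilbert stratification, and in particular every orbit is contained in a single stratum $S_G^M$.

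For the remaining two clauses I would compute stabilizers and orbit closures. From $\rho_\epsilon(f)=f$ one gets $a_i\epsilon^{i-1}=a_i$ for all $i$, so the stabilizer of $f$ is the group $\{\epsilon\in\K^\star : \epsilon^{i-1}=1\text{ whenever }a_i\neq0\}$; this is trivial as soon as $a_2\neq0$, finite in general, and the unique point fixed by all of $\K^\star$ is $u$. This stabilizer computation is the one point requiring care, and I regard it as the main obstacle: in the strict sense the action is free on the open locus where the quadratic coefficient is nonzero and has finite (cyclic, root-of-unity) stabilizers elsewhere, so that all orbits in $\cA_G\setminus\{u\}$ are one-dimensional. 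Finally, to place $u$ in the closure of every nonempty stratum, note that the orbit map $\epsilon\mapsto\rho_\epsilon(f)=u+\sum_{i}a_i\epsilon^{i-1}u^i$ extends to a morphism $\mathbb A^1\to\cA_G$ sending $0\mapsto u$; since $\K^\star$ is Zariski dense in $\mathbb A^1$, the point $u$ lies in the closure of the orbit of $f$. As the whole orbit sits inside the stratum $S_G^M$ containing $f$, we conclude $u\in\overline{S_G^M}$ for every nonempty stratum, which also matches the closure description $\overline{S_G^M}=S_G^{\preceq M}$ from Corollary~\ref{cor constructive}. Apart from the freeness subtlety, every step is routine once the automorphism $\psi_\epsilon$ is in hand.
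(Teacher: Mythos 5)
Your argument is correct and, for the main claim, essentially coincides with the paper's: the paper performs the substitution $Y_v=X_v/\epsilon$ in the presentation of Corollary~\ref{cor:relations}, which is the same rescaling you implement via the automorphism $\psi_\epsilon$ of $\Phi_G$; both yield a filtered isomorphism $\CC^f_G\cong\CC^{\rho_\epsilon(f)}_G$ and hence equality of Hilbert sequences. You diverge on the last two clauses, in both cases to your advantage. For the closure statement the paper invokes Borel's fixed point theorem (``the closure of each stratum should contain a fixed point''), whereas you observe directly that the orbit map $\epsilon\mapsto u+\sum_i a_i\epsilon^{i-1}u^i$ extends over $0\in\mathbb{A}^1$ with value $u$; your version is more elementary and sidesteps the fact that the strata are not complete, which is what Borel's theorem nominally requires. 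As for freeness, your stabilizer computation is right and exposes a genuine slip in the paper: the asserted implication that $f\ne u$ forces $\tfrac{1}{\epsilon}f(\epsilon u)\ne f(u)$ for all $\epsilon\ne 1$ is false --- for instance $f=u+u^3$ is fixed by $\epsilon=-1$ --- so the action is free only where the quadratic coefficient is nonzero and has finite cyclic (root-of-unity) stabilizers elsewhere. This finite-stabilizer picture is in fact what is needed for the paper's own description of the quotient as a \emph{weighted} projective space, so the lemma should really claim finite stabilizers on $\cA_G\setminus\{u\}$ rather than literal freeness, and your write-up already identifies and corrects exactly this point.
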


\begin{proof}
  We substitute
$Y_v = X_v/\epsilon$
  to get an isomorphism of algebras
\[
\K[X_v \,:\, v\in V]/I_G^{f(u)} \cong \K[Y_v \,:\, v\in V]/(I_G^{f(\epsilon u)}).
\]
The second claim follows
from the fact that if $f=u+\dots \ne u$,
then $f(u)\neq  \frac{1}{\epsilon}f(\epsilon u)$ for $\eps\neq 1$.
But, by Borel's fixed point theorem, the closure of each stratum should contain
a fixed point.
\end{proof}

Denote by $P\cA_G=\left(\cA_G\setminus \{u\}\right)/\K^\star$ the weighted
projective space obtained as the latter quotient. Taking the quotient we  obtain the induced Hilbert
stratification of $P\cA_G$ which we will be interested in. Consider the (finite)
poset of Hilbert strata. This poset has the minimal element corresponding to
$\{u\}$ and the maximal element corresponding to the stratum with a generic \HS
sequence.

\begin{corollary}
  The \HS sequence of the graded algebra $\CC_G  $ is minimal.
The maximal length of the \HS sequence of a generalized zonotopal algebras in
$\cA_G$ is attained for $f(u)=u$ and equals $|G|$ which is the total number of
edges in $G$.
\end{corollary}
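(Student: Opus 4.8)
The plan is to establish the two assertions by quite different means: the minimality of the graded sequence is a formal consequence of the stratification results already proved, whereas the length statement reduces to the nilpotency of the edge algebra $\Phi_G$.

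For the first assertion I would combine Lemma~\ref{lm:action} with Corollary~\ref{cor constructive}. Lemma~\ref{lm:action} places the point $u\in\cA_G$ in the closure of every Hilbert stratum, and Corollary~\ref{cor constructive} identifies that closure as $S_G^{\preceq M}$ in the lexicographic order. Together these give $u\in S_G^{\preceq M}$ for every nonempty stratum, i.e.\ $\hsb_G^u\preceq M$ for every achievable sequence $M$; since $\hsb_G^u$ is itself achievable, it is the lexicographic minimum. This part is essentially formal once the stratification machinery is in place.

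For the length statement I would read the length of $\hsb_G^f=(1,m_1,\dots,m_n)$ as the index $n$ of its last term, equivalently the least $N$ with $\CC^{f,N}_G=\CC^f_G$. The bound $n\le|E|$ holds for every $f$ by nilpotency: by Proposition~\ref{prop2} the generators $Y_v$ lie in the maximal ideal $m$ of $\CC^f_G=\CC_G\subset\Phi_G$, and since $m$ consists of elements with vanishing constant term, $m\subset\bigoplus_{k\ge1}(\Phi_G)_k$; as $\Phi_G$ vanishes in $\phi$-degrees exceeding $|E|$, this gives $m^{|E|+1}=0$. Hence any product of $|E|+1$ of the $Y_v$ vanishes, so $\CC^{f,|E|}_G=\CC^f_G$ and $n\le|E|$. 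The matching lower bound is needed only at $f=u$, where the filtration is the grading: then $\CC^{u,|E|-1}_G=\bigoplus_{k\le|E|-1}(\CC_G)_k$ is a proper subspace because the top component $(\CC_G)_{|E|}$ is nonzero. For the latter I would cite Theorem~\ref{th:forests}(ii), by which the empty forest is the unique forest contributing to degree $|E|$, so $\dim(\CC_G)_{|E|}=1$. Thus $n=|E|$ at $u$, and combined with the universal bound the maximal length equals $|E|$ and is attained at $u$.

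The point requiring care is that length and lexicographic order run oppositely: at $u$ the sequence is lex-minimal yet of maximal length. In particular the universal bound $n\le|E|$ cannot be extracted from the semicontinuity results governing the lex order; it rests instead on the fact that the maximal ideal of the ambient algebra $\CC_G$ is nilpotent of index $|E|+1$, a property independent of $f$. For graphs with loops the same argument yields the number of non-loop edges in place of $|E|$, since loops do not enter the vertex flows $X_v$.
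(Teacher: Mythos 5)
Your proposal is correct, and since the paper states this corollary without proof (it is placed immediately after Lemma~\ref{lm:action} and Corollary~\ref{cor constructive} as an evident consequence), your argument is precisely the intended one: minimality of $\hsb_G^u$ follows from $u$ lying in the closure $\overline{S_G^M}=S_G^{\preceq M}$ of every nonempty stratum, and the length bound follows from $\CC_G^f\subset\Phi_G$ having nilpotent maximal ideal of index $|E|+1$ together with $\dim(\CC_G)_{|E|}=1$ at $f=u$. Your closing observations --- that the length bound is independent of the semicontinuity machinery, and that for graphs with loops $|E|$ should be read as the number of non-loop edges (consistent with the paper's remark that loops do not enter the flows $X_v$) --- are accurate refinements of the statement as printed.
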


\begin{example}
For $G=K_4$, $P\cA_{K_4}$ is a weighted projective line (topologically $S^2$).
Its Hilbert stratification consists of two points corresponding to $f(u)=u+u^2$
and $f(u)=u+u^3$ and a complex $1$-dimensional stratum (coinciding with $S^2$
minus two points) which is the factor of $u+au^2+cu^3$ with $b\neq 0$ and $c\neq
0$ mod the above $\K^\star$-action, see subsection~\ref{K4} below.
\end{example}

\subsection{Specialization}

If $H$ is a subgraph of $G$ and $v$ is a vertex of $H$, then
the embedding $H\hookrightarrow G$ sends the generator $X_v^G$ of $\CC_G^f$ to
$X_v^H$. Therefore, we obtain a surjective homomorphism
$\pi_H \colon \C_G^f\to \C_H^f$.

\smallskip
We will now study the effect of this map on stratifications.

\begin{lemma}\label{lm: direct criterion}
Let $A$ be a commutative ring and $M$ be an $A$-module
such that $M = M_1 \oplus M_2$. Suppose that $0 \neq N \subset M$ is a submodule
that satisfies a commutative diagram
\[\begin{tikzcd}
 N \arrow{r}{f} \arrow[hookrightarrow, swap]{d}{} & N_2
 \arrow[hookrightarrow]{d}{}
 \arrow{r}{}  & 0  \\
M\arrow{r}{\pi} & M_2 \arrow{r}{}  & 0
\end{tikzcd}
\]
for some $A$-module $N_2 \neq 0, N$. Then $N = M_1 \cap N \oplus M_2 \cap N$.
\end{lemma}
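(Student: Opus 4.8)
The plan is to read an idempotent endomorphism of $N$ off the diagram and use it to split $N$. Write $\pi\colon M\to M_2$ for the given projection, so that $\ker\pi=M_1$ and, after identifying $M_2$ with the summand $0\oplus M_2\subseteq M$, the map $\pi$ is idempotent on $M$ with $\pi|_{M_2}=\mathrm{id}$. Commutativity of the square says that the composite $N\hookrightarrow M\xrightarrow{\pi}M_2$ equals $N\xrightarrow{f}N_2\hookrightarrow M_2$; since the top row is exact, $f$ is onto, and hence $\pi(N)=N_2$. At the same time $\ker(f)=\ker(\pi|_N)=N\cap\ker\pi=N\cap M_1$.

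First I would dispose of the easy inclusion. Writing $\pi_1\colon M\to M_1$ for the complementary projection, every $n\in N$ decomposes as $n=\pi_1(n)+\pi(n)$ with $\pi_1(n)\in M_1$ and $\pi(n)\in M_2$, and $M_1\cap M_2=0$ makes this decomposition unique; consequently the internal sum $(N\cap M_1)+(N\cap M_2)$ is automatically direct and is contained in $N$. Thus the statement reduces to the reverse inclusion $N\subseteq(N\cap M_1)+(N\cap M_2)$, i.e.\ to showing that for each $n\in N$ the component $\pi(n)$ again lies in $N$.

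The crux — and the step where the precise content of the diagram enters — is the identification $N_2=N\cap M_2$, equivalently $N_2\subseteq N$ (one inclusion, $N\cap M_2\subseteq\pi(N)=N_2$, is automatic since any element of $N\cap M_2$ is fixed by $\pi$). Granting $N_2\subseteq N$, the projection restricts to an idempotent $A$-linear map $\pi|_N\colon N\to N$ with image $N_2$ and kernel $N\cap M_1$, and idempotency yields $N=\ker(\pi|_N)\oplus\Im(\pi|_N)=(N\cap M_1)\oplus N_2=(N\cap M_1)\oplus(N\cap M_2)$, as claimed. I expect the containment $N_2\subseteq N$ to be the main obstacle: it is exactly the assertion that projecting $N$ onto the $M_2$-factor does not leave $N$, and it is what the right-hand vertical arrow together with the hypothesis $N_2\neq 0,N$ is meant to guarantee — the latter also ensuring that both summands are proper and nonzero, which is the form in which the lemma is used for the Hilbert stratification. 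Once this containment is secured, the rest is the formal idempotent-splitting argument above.
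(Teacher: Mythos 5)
Your reduction has the right skeleton and in fact mirrors the paper's own argument: the paper reduces the lemma to the containment $N \subseteq (N\cap M_1) + M_2$ (i.e., that the $M_1$-component of each element of $N$ again lies in $N$), while you reduce it to the equivalent statement $N_2=\pi(N)\subseteq N$ (that the $M_2$-component lies in $N$); once either containment is granted, your idempotent-splitting finish and the paper's ``the claim easily follows'' coincide. The problem is that you never prove the crux --- you explicitly write that $N_2\subseteq N$ is ``what the right-hand vertical arrow together with the hypothesis $N_2\neq 0,N$ is meant to guarantee'' and then stop. That is not a proof, and no proof from the stated hypotheses exists: take $A=\K$, $M=\K^4$ with $M_1=\K e_1\oplus\K e_2$, $M_2=\K e_3\oplus \K e_4$, and $N=\K(e_1+e_3)\oplus\K e_2$. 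Then $f=\pi|_N$ makes the diagram commute, $N_2=\pi(N)=\K e_3$ is nonzero and not isomorphic to $N$, yet $N\cap M_1=\K e_2$, $N\cap M_2=0$, and $(N\cap M_1)\oplus(N\cap M_2)=\K e_2\subsetneq N$. So the step you defer is not a technicality to be ``secured'' later; it is the entire content of the lemma, and it genuinely fails in this generality.

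In fairness, the paper's proof has the same hole: the assertion that ``the inclusion $N_2\subseteq M_2$ given in the diagram implies $N\subseteq N\cap M_1+M_2$'' is exactly the unproved crux in its complementary form, and it is violated by the example above, where $e_1+e_3\notin \K e_2+M_2$. The statement becomes correct --- and your idempotent argument then finishes it in one line --- if one adds the hypothesis that $N_2\subseteq N$ as submodules of $M$, equivalently that $N$ is stable under the projection $\pi$ onto $M_2$. So the right conclusion of your analysis is not that the given hypotheses guarantee the containment, but that the containment must be imposed as an extra hypothesis and then verified separately wherever the lemma is invoked (in particular in the proof of Theorem~\ref{thm graph splitting}).
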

\begin{proof}
It is easy to check that the diagram implies that $\ker f = N \cap M_1 \neq 0$.
Then the inclusion $N_2 \subseteq M_2$ given in the diagram implies that
$N \subseteq N\cap M_1 + M_2$ as submodules of $M$. Since $N$ is a submodule,
this forces the containment $N \subseteq N\cap M_1 + N \cap M_2$ and the claim
easily follows.
\end{proof}

As in the proof of Corollary~\ref{cor constructive},
we can define $\C_G^f(A)$ for any ground ring $A$ and $f \in A[u]$.
As above, we have a surjective homomorphism of  $A$-algebras,
$\C_G^f(A) \to \C_H^f(A)$.

\begin{theorem}\label{thm graph splitting}
Let $A$ be a commutative ring and $f \in A[u]$ such that
$f \in (u)\setminus (u^2)$.
Let $H \subset G$ be graphs.
Then the natural projection map splits and allows to identify
$\C_H^f(A)$ with a direct summand of $\C_G^f(A)$
in the category of  filtered $A$-modules.
\end{theorem}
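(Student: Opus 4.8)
The plan is to prove the splitting by exhibiting an explicit $A$-linear (indeed filtered) section of the surjection $\pi_H\colon \C_G^f(A)\to \C_H^f(A)$ and then to invoke Lemma~\ref{lm: direct criterion} to promote the vector-space/module splitting into an honest direct-sum decomposition compatible with the filtrations. The key structural observation is that the vertex set of $G$ is partitioned as $V(G)=V(H)\sqcup V(G\setminus H)$, and the generators $Y_v=f(X_v)$ for $v\in V(H)$ have a distinguished behaviour under $\pi_H$: since an edge of $H$ is an edge of $G$ between two vertices of $H$, the coefficient vector defining $X_v^G$ restricts, modulo the edge variables $\phi_e$ with $e\notin E(H)$, to exactly $X_v^H$. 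The hypothesis $f\in(u)\setminus(u^2)$ (nondegeneracy) guarantees that the $Y_v$ genuinely generate, so by Corollary~\ref{cor:relations} both algebras have the explicit presentation by the ideals $I_H^f$ and $I_G^f$, and the map $\pi_H$ is identified with the quotient that kills the generators $y_v$ for $v\notin V(H)$ together with the $H$-relations that become non-$H$-relations.

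First I would set up the two ground-ring algebras $R_G:=\C_G^f(A)$ and $R_H:=\C_H^f(A)$ via their presentations from Corollary~\ref{cor:relations}, viewing $R_H$ as the subalgebra/subquotient generated by the $Y_v$ with $v\in V(H)$. Next I would construct the section $s\colon R_H\to R_G$ on generators by $s(Y_v^H)=Y_v^G$ for $v\in V(H)$, and verify it is well-defined by checking that every defining relation of $R_H$ maps to a relation holding in $R_G$; this uses that $D_I$ computed inside $H$ is at most $D_I$ computed inside $G$ for $I\subseteq V(H)$, so that the power relation $\bigl(\sum_{v\in I}f^{-1}(Y_v)\bigr)^{D_I^H+1}$ in $R_H$ pulls back to an element that is already zero in $R_G$ (a higher power of the same form vanishes). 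The section $s$ is manifestly filtered since it sends degree-$\le k$ monomials in the $Y_v$ to degree-$\le k$ monomials, and $\pi_H\circ s=\mathrm{id}_{R_H}$, giving the module splitting $R_G=\mathrm{im}(s)\oplus\ker(\pi_H)$.

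To upgrade this to the claimed filtered direct-summand statement in the sense matching Lemma~\ref{lm: direct criterion}, I would apply that lemma with $M=R_G$, the decomposition coming from $s$ and $\pi_H$, and $N$ a chosen filtration piece (or the relevant submodule detecting the filtration), with $N_2$ its image in $R_H$; the hypotheses of the lemma are exactly that $\pi_H$ restricts compatibly and that the relevant image is a proper nonzero submodule, which holds because $H$ is a proper subgraph with at least one vertex. The lemma then yields that $N$ splits as $(N\cap\ker\pi_H)\oplus(N\cap\mathrm{im}\,s)$ for each filtration level, which is precisely the statement that $\C_H^f(A)$ is a filtered direct summand.

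The main obstacle I anticipate is verifying well-definedness of the section at the level of relations, i.e.\ checking that the ideal $I_H^f$ maps into $I_G^f$ under $Y_v^H\mapsto Y_v^G$. The subtlety is that $f^{-1}$ is only a polynomial approximation (Remark~\ref{rem:nilpot}) and the nilpotency degrees $\deg_H v$ and $\deg_G v$ differ, so one must be careful that $Y_v^{\deg_H v+1}$, which is zero in $R_H$, need \emph{not} be zero in $R_G$ — hence $s$ cannot be defined naively as a map of abstract presented algebras but must be realized concretely inside $\Phi_G$, where $\C_H^f\subset\Phi_H$ and the inclusion $\Phi_H\hookrightarrow\Phi_G$ carries $X_v^H$ to the $H$-part of $X_v^G$. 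Working inside the edge algebras, rather than with abstract presentations, is what makes the section well-defined and filtered, and reconciling this concrete construction with the module-theoretic hypotheses of Lemma~\ref{lm: direct criterion} is the step requiring the most care.
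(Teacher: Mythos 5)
There is a genuine gap: the section $s\colon \C_H^f\to\C_G^f$, $Y_v^H\mapsto Y_v^G$, that your whole argument rests on is not well defined, and your justification for it is backwards. For $I\subseteq V(H)$ one has $D_I^H\le D_I^G$, so the relation $\bigl(\sum_{v\in I}f^{-1}(Y_v)\bigr)^{D_I^H+1}=0$ holding in $\C_H^f$ is a \emph{lower} power than the one known to vanish in $\C_G^f$; a higher power vanishing never forces a lower power to vanish. The same problem occurs with $Y_v^{\deg_H v+1}$, as you note yourself. A concrete failure: take $H$ the single edge $uv$ inside $G=$ the path $u$--$v$--$w$; then $(X_v^H)^2=\phi_{uv}^2=0$ in $\Phi_H$, but $(X_v^G)^2=(-\phi_{uv}+\phi_{vw})^2=-2\phi_{uv}\phi_{vw}\ne 0$, so no algebra map can send $Y_v^H\mapsto Y_v^G$. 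Your proposed repair --- realize the section inside the edge algebras via $\Phi_H\hookrightarrow\Phi_G$ --- abandons the map $Y_v^H\mapsto Y_v^G$ altogether (the inclusion sends $X_v^H$ to $X_v^H$, not to $X_v^G$) and silently assumes that $\C_H^f$, viewed inside $\Phi_H\subset\Phi_G$, is contained in $\C_G^f$. That containment is essentially the content of the theorem and is never established in your write-up. Your invocation of Lemma~\ref{lm: direct criterion} is also circular as stated: the lemma takes a pre-existing decomposition $M=M_1\oplus M_2$ as input and produces the splitting of a submodule $N$; you instead try to feed it the decomposition ``coming from $s$,'' which is the thing you are trying to construct.

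For contrast, the paper's proof needs no section at all. It reduces to removing one edge $e$, uses the obvious decomposition of the ambient edge algebra $\Phi_G(A)=\Phi_H(A)\oplus\phi_e\Phi_H(A)$ as the input $M=M_1\oplus M_2$ of Lemma~\ref{lm: direct criterion}, takes $N=\C_G^f(A)$ and $N_2=\C_H^f(A)$ (the surjection $\pi_H\colon\C_G^f\to\C_H^f$ being induced by $\phi_e\mapsto 0$), and concludes $\C_G^f=(\phi_e\Phi_H\cap\C_G^f)\oplus(\Phi_H\cap\C_G^f)$ with $\Phi_H\cap\C_G^f=\C_H^f$; a second application of the lemma handles the filtration levels. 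If you want to salvage your approach, the correct move is exactly this: obtain the splitting from the ambient square-free algebra rather than from a generator-level section, since the latter does not exist.
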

\begin{proof}
It suffices to assume that $H$ is obtained by removing a single edge.
We can similarly define the algebras $\Phi_G(A), \Phi_H(A)$ as square-free
algebras on the sets of edges with coefficients in $A$.
It is clear from the relations that $\C_G^f(A) \subset \Phi_G(A)$.

We note that the map
$\pi_H \colon \C_G^{[f]}(A)\to \C_H^{[f]}(A),$ sending $X_i^{G}\mapsto X_i^{H}$,
is induced by the natural projection map
$\pi_H \colon \Phi_G(A) \to \Phi_H(A)$. By the definition of $\Phi_G$
as a square-free algebra on the set of edges,
the map $\pi_H$ splits, i.e., $\Phi_G(A) = \Phi_H(A) \oplus e\Phi_H(A)$.
The functoriality of the definitions gives a commutative diagram
\[\begin{tikzcd}
 \C_G^{[f]}\arrow{r}{\pi_H} \arrow[hookrightarrow, swap]{d}{} & \C_H^{[f]}
 \arrow[hookrightarrow]{d}{} \arrow{r}{}  & 0  \\
\Phi_G \arrow{r}{\pi_H} & \Phi_H \arrow{r}{}  & 0.
\end{tikzcd}
\]
Therefore, Lemma~\ref{lm: direct criterion} asserts that
$\C_G^{[f]}(A) = e\Phi_H(A) \cap \C_G^{[f]}(A) \oplus \Phi_H(A)\cap \C_G^{[f]}(A)$.
Using Lemma~\ref{lm: direct criterion}
again, we extend the splitting to the filtrations;  note that $\pi_H$ respects the filtration, i.e.,
\[
  \pi_H \left (\langle \prod f(X_i)^{a_i} \mid \sum a_i \leq n \rangle \right)
  \subseteq \langle \prod (\pi_H f(X_i))^{a_i} \mid \sum a_i \leq n \rangle
  \subseteq  \C_H^{[f]}(A).
\]
\end{proof}

\begin{theorem} \label{th:extra}
Suppose that $\K$ is algebraically closed. Then 
the projection map $\pi_H$ induces the surjective map $\cA_G\to \cA_H$
which preserves the Hilbert stratifications, i.e., the image of a connected component
of a Hilbert stratum is still contained in one stratum.
\end{theorem}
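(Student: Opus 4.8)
The plan is to make the induced map explicit and then reduce the whole statement to a rank computation over a connected component of a single stratum. Since $H\subseteq G$ we have $\md_H\le\md_G$, and by Remark~\ref{rem:nilpot} the coefficients of $f$ in degrees $>\md_H$ do not affect the generators $Y_v=f(X_v)$ of $\CC^f_H$. Hence $\pi_H$ induces the linear truncation $\cA_G\to\cA_H$, $f\mapsto f\bmod u^{\md_H+1}$, which is manifestly surjective. Writing $\mathbf h^f_G=(\dim_\K\CC^{f,k}_G)_k$ and $\mathbf h^{\bar f}_H=(\dim_\K\CC^{\bar f,k}_H)_k$ for the cumulative \HS functions, the strata of $\cA_G$ and $\cA_H$ are precisely the level sets of $\mathbf h_G$ and $\mathbf h_H$. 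Thus the assertion is equivalent to the statement that $\mathbf h_H$, pulled back along the truncation, is constant on every connected component of each stratum $S^M_G=\{\mathbf h_G=M\}$; equivalently, that $\mathbf h_H$ is locally constant on $S^M_G$.

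Next I would pass to the universal family, exactly as in the proof of Corollary~\ref{cor constructive}. Over $A=\K[T_2,\dots,T_{\md_G}]$ set $f=u+T_2u^2+\cdots+T_{\md_G}u^{\md_G}$ and form the finite $A$-algebras $\CC^{[f]}_G(A)$ and $\CC^{[f]}_H(A)$. Each fibre of these algebras is a deformed zonotopal algebra over the residue field with nondegenerate parameter (linear term $1$), so by Corollary~\ref{cor: ass graded} it has dimension $\dim\CC_G$, respectively $\dim\CC_H$, independently of the point; since $A$ is a domain, and in particular reduced, constancy of the fibre dimension forces both $\CC^{[f]}_G(A)$ and $\CC^{[f]}_H(A)$ to be locally free, hence flat, over $A$. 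By Theorem~\ref{thm graph splitting}, applied over $A$ with the generic $f\in(u)\setminus(u^2)$, there is a splitting of \emph{filtered} $A$-modules $\CC^{[f]}_G(A)=\CC^{[f]}_H(A)\oplus K$; restricting to each filtration level this yields, for every $k$, a direct sum decomposition $\mathcal G_k=\mathcal H_k\oplus\mathcal K_k$, where $\mathcal G_k\subseteq\CC^{[f]}_G(A)$ and $\mathcal H_k\subseteq\CC^{[f]}_H(A)$ are the submodules generated by the monomials of degree $\le k$ in the $Y_v$ and $\mathcal K_k=K\cap\mathcal G_k$.

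Now fix $k$ and let $C$ be a connected component of a stratum $S^M_G$, equipped with its reduced structure, and restrict the entire picture to $C$. Because $\mathbf h_G$ is constant on $C$, the integer $\dim_{k(\mf p)}\CC^{\bar f,k}_G$ does not vary; by Theorem~\ref{thm semi}(3) this means that $\mathcal G_k|_C$ is flat over $C$ and that its formation commutes with base change, i.e. $\mathcal G_k\otimes_A k(\mf p)\hookrightarrow\CC^{[f]}_G(A)\otimes_A k(\mf p)$ is injective for $\mf p\in C$, so that $\mathcal G_k|_C$ is locally free of rank equal to the constant value of $\dim_{k(\mf p)}\CC^{\bar f,k}_G$. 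Being a direct summand of a locally free module over the \emph{connected} scheme $C$, the module $\mathcal H_k|_C$ is then itself locally free of constant rank. Finally, the fibre map $\mathcal H_k\otimes_A k(\mf p)\to\CC^{[f]}_H(A)\otimes_A k(\mf p)$, whose image is by definition $\CC^{\bar f,k}_H$, becomes injective after composing with the split inclusion $\CC^{[f]}_H(A)\otimes_A k(\mf p)\hookrightarrow\CC^{[f]}_G(A)\otimes_A k(\mf p)$, since that composite factors as $\mathcal H_k\otimes_A k(\mf p)\hookrightarrow\mathcal G_k\otimes_A k(\mf p)\hookrightarrow\CC^{[f]}_G(A)\otimes_A k(\mf p)$; a composite that is injective forces the first arrow to be injective. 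Hence $\dim_{k(\mf p)}\CC^{\bar f,k}_H=\rank(\mathcal H_k|_C)$ is constant on $C$. Letting $k$ range over all degrees shows $\mathbf h_H$ is constant on $C$, which is the claim.

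The hard part will be the bookkeeping in the last paragraph: matching the fibrewise Hilbert numbers $\dim_\K\CC^{\bar f,k}_H$, which are \emph{submodules of the fibre}, with the fibres of the family submodules $\mathcal H_k$, which are \emph{fibres of the submodule}. These two can differ by a $\mathrm{Tor}$ term, and the crucial mechanism is that constancy of $\mathbf h_G$ along $C$ — through the freeness criterion of Theorem~\ref{thm semi}(3) — kills this discrepancy for $\mathcal G_k$, after which the direct-summand decomposition together with connectedness of $C$ transports the good base-change behaviour to $\mathcal H_k$. A secondary technical point, which I would dispatch by working on the finitely many irreducible components of $S^M_G$ passing through a given point and noting that they all carry the common value $\mathbf h_H(\mf p)$ there, is the upgrade from constancy on each reduced irreducible piece to genuine local constancy on the (possibly reducible) constructible stratum, which is what gives constancy on connected components.
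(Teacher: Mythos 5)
Your proof is correct and follows essentially the same route as the paper: pass to the universal family over $A=\K[T_2,\dots,T_{\md_G}]$, invoke the filtered splitting of Theorem~\ref{thm graph splitting}, translate constancy of the $G$-stratification into freeness of the filtration pieces via Theorem~\ref{thm semi}(3), and conclude because a direct summand of a (locally) free module is projective, hence of locally constant rank. The only differences are presentational — the paper localizes at each point and uses ``projective over a local ring is free'' where you argue rank-constancy over the connected reduced component, and you spell out the base-change bookkeeping (fibre of the submodule vs.\ submodule of the fibre) that the paper leaves implicit.
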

\begin{proof}
As in the proof of Corollary~\ref{cor constructive} we will parametrize $\cA_G$
as the fibers  of $A := \K[T_2, \ldots, T_v] \to \C_G^f(A)$.

By Corollary~\ref{cor constructive}, it suffices to show that if
$\mf p \in \Spec A$ and $s \notin \mf p$ are such that $V_{\K}(\mf p) \cap D(s)$
is contained in a single Hilbert stratum of $\C_G^f(A)$, then it is contained in
a single Hilbert stratum of $\C_H^f(A)$. Here, $V_{\K}(\mf p)$ denotes the set of 
$\K$-rational points containing $\mf p$ and $D(s)$ denotes the distinguished
open set of points not containing $s$.
By the construction we can pass to $B := A_s/\mf pA_s$ and consider the
stratifications induced by $B$-algebras $\C_G^f(B)$ and its filtered direct
summand $\C_H^f(B)$, see Theorem~\ref{thm graph splitting}.

Because $\K$ is algebraically closed, the set of $\K$-rational points is
dense, so the Hilbert stratification of $G$ is constant on the entire $\Spec B$.
Thus, by Theorem~\ref{thm semi} and Corollary~\ref{cor constructive} the
condition on the stratification can be restated as freeness  of $\C_G^f(B) \otimes_B B_\mf q$ (as a filtered
$B_\mf q$-module) for every $\mf q$-rational
point of $\Spec B_\mf q$.
But a direct summand of a free module is projective and projective modules over
a local ring are free.
\end{proof}

\begin{remark}
Like other similar results, Theorem~\ref{th:extra} shows that when $\K$ is not
algebraically closed, it is better to consider the stratification of the entire
space $\Spec A$ rather than only of the set of its $\K$-points.
\end{remark}

\begin{corollary}
For any usual graph $G$ on $n$ vertices, the Hilbert stratification of $\cA_G$  is
a coarsening of the Hilbert stratification of $\cA_{K_n}$, where $K_n$ denotes a complete graph on $n$ vertices. 
\end{corollary}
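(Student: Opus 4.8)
The plan is to deduce the statement directly from Theorem~\ref{th:extra}. Since $G$ is a usual graph on $n$ vertices --- that is, simple and loopless --- every edge of $G$ joins two of its $n$ vertices, so $G$ is a subgraph of the complete graph $K_n$. In particular $\md_G \le \md_{K_n} = n-1$, and the projection of edge algebras $\Phi_{K_n}\to\Phi_G$ restricts to the surjective homomorphism $\pi_G\colon \C_{K_n}^f\to\C_G^f$ of Theorem~\ref{thm graph splitting}, realized on parameters as the truncation map $\cA_{K_n}\to\cA_G$ discarding the terms of degree exceeding $\md_G$ (such terms are irrelevant for $\C_G^f$ by Remark~\ref{rem:nilpot}).

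First I would apply Theorem~\ref{th:extra} with the roles of $H\subset G$ played by $G\subset K_n$. Assuming $\K$ algebraically closed (or, more invariantly, working on $\Spec A$ as in the remark following that theorem), this yields the surjective map $\pi_G\colon \cA_{K_n}\to\cA_G$ which preserves the Hilbert stratifications: the image of each connected component of a Hilbert stratum of $\cA_{K_n}$ is contained in a single Hilbert stratum of $\cA_G$.

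The final step is to phrase this as a coarsening. Consider on $\cA_{K_n}$ the two partitions: its native Hilbert stratification $\{S_{K_n}^{M'}\}$, and the pullback $\{\pi_G^{-1}(S_G^M)\}$ of the Hilbert stratification of $\cA_G$ along $\pi_G$. The conclusion of Theorem~\ref{th:extra} says that each connected component of every $S_{K_n}^{M'}$ lands in a single block $\pi_G^{-1}(S_G^M)$; hence the pullback partition is coarser than the native one, i.e.\ the native stratification refines the pullback. This is exactly the assertion that the Hilbert stratification of $\cA_G$ is a coarsening of that of $\cA_{K_n}$.

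I do not anticipate a genuine obstacle, since the content is carried entirely by Theorem~\ref{th:extra}; the points needing care are bookkeeping ones. Chiefly, one must fix the direction of the map --- the graph inclusion $G\subset K_n$ produces the \emph{projection} $\cA_{K_n}\to\cA_G$, not an inclusion of parameter spaces --- and one must confirm that the truncation map is well defined and surjective, which follows from $\md_G\le n-1$ together with Remark~\ref{rem:nilpot}. One should also carry along the standing hypothesis that $\K$ is algebraically closed inherited from Theorem~\ref{th:extra}, or else state the conclusion over $\Spec A$.
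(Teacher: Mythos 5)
Your proposal is correct and matches the paper's intent: the corollary is stated immediately after Theorem~\ref{th:extra} with no separate proof, precisely because it follows by applying that theorem to the inclusion $G\subset K_n$ and reading the conclusion (each connected component of a stratum of $\cA_{K_n}$ maps into a single stratum of $\cA_G$) as the coarsening statement. Your bookkeeping about the direction of the map $\cA_{K_n}\to\cA_G$ being truncation (via Remark~\ref{rem:nilpot}) and about the algebraically closed hypothesis is accurate and consistent with the paper.
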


\section{Experimental results}  \label{sec:exp}

In this section,  we present the results of computations of the Hilbert
sequences $\hs_G^f$ of deformed zonotopal algebras $\CC^f_G$ for various graphs
$G$ and nondegenerate polynomials $f$  using Macaulay2 (\cite{M2}). Notice that, when
$f=u$, i.e.\ in the graded case, the answer is provided by
Theorem~\ref{th:forests}. In particular, if $G$ is a tree with $n$ edges, then
the Hilbert series of $\CC_G^u$ is equal to $(1+t)^n$.
For this reason, listing results of our computations below, we usually exclude
the graded case $f=u$. Also we do not include in $f$ monomials of degree higher
than $\md_G$, since as explained in Remark~\ref{rem:nilpot}, they do not affect
$\CC_G$.

\subsection{Special families of graphs}

In the tables below we present  the Hilbert
sequences for several families of graphs with $n=|V|$ vertices. Each row always
starts  with $1$, and the second entry for all $f\ne u$  equals $n$.

\begin{example}
\label{ex:An} For the chain graph with $n\ge 3$ vertices, $A_n=$
\begin{minipage}{3cm}
\begin{tikzpicture}
[scale=.5,auto=left,every node/.style={circle,fill=black!75,inner sep=0pt, minimum size=.15cm}]
  \node (n1) at (0,0) {};
  \node (n2) at (1,0) {};
  \node[fill=none] (n3) at (2.4,0) {\ $\cdots$};
  \node (n4) at (3.7,0) {};
   \node (n5) at (4.7,0) {};
  \foreach \from/\to in {n1/n2,n2/n3,n3/n4,n4/n5}
    \draw (\from) -- (\to);
  \node[right,fill=none] at (n5.east) {,};
\end{tikzpicture}
\end{minipage}
\
besides the graded case, $f=u$, we only need
to consider $f=u+u^2.$
The corresponding \HS sequences for $3\leq n\leq 17$, are listed below.
\smallskip

\begin{minipage}{5cm}
\begin{tabular}{rrrrrrrrrr}
1&3&&&&&&&&\\
1 &4&3&&&&&&\\
1& 5& 10&&&&&&\\
1& 6& 16& 9&&&&&\\
1& 7& 23& 33&&&&&\\
1& 8& 31& 61& 27&&&&&\\
1& 9& 40& 98& 108&&&&&\\
1& 10& 50& 145& 225& 81&&&&\\
1& 11& 61& 203& 397& 351&&&&\\
1& 12&73&273&636&810&243&&\\
1& 13&86&356&955&1551&1134&&&\\
1&14&100&453&1368&2665&2862&729&\\
1& 15&115&565&1890&4258&5895&3645&\\
1& 16& 131& 693& 2537& 6452& 10788& 9963& 2187\\
1& 17& 148& 838& 3326& 9386& 18232& 21924& 11664\\
\end{tabular}
\end{minipage}

\smallskip\noindent

Conjectures:
\begin{itemize}
\item[-]
 for $n\ge 5$, the $3$rd entry equals\;
$\ds\frac{n^2+n-10}{2}$;
\item[-]
for $n\ge 7$,  the $4$th entry equals\;
$\ds \frac{n^3 +3n^2-46n+30}{6}$;
\item[-]  for $n\ge 9$, the $5$th entry equals\;
$\ds \frac{n^4+6n^3-121n^2+42n+1080}{24}$;
\item[-]   for $n\ge 11$, the $6$th entry equals\;
$\ds \frac{n^5+10n^4-245n^3-250n^2+9364n-12000}{120}$;
\item[-]  for $n\ge 13$, the $7$th entry equals\;
$\ds \frac{n^6+15n^5+3325n^4+1785n^3+11874n^2-201960n+93600}{6!}$;
\item[-] for $n=2k$, the $(k+1)$st entry equals  $3^{k-1}$.
\end{itemize}
\end{example}

\begin{example}
For the cycle graph with $n$ vertices, $\widehat{A}_n=$
\hfill
\begin{minipage}{3cm}
\begin{tikzpicture}
  [scale=.4,auto=left,every node/.style={circle,fill=black!75,inner sep=0pt,
    minimum size=.15cm}]
 \node (n1) at (0,-4) {};
  \node (n2) at (-1,-2) {};
  \node (n3) at (0,0) {};
  \node[fill=none] (n4) at (2,1) {$\substack{\ \\ \ \cdots \  }$};
  \node (n5) at (4,0) {};
  \node (n6) at (5,-2) {};
  \node (n7) at (4,-4) {};
   \node[fill=none] (n8) at (2,-5) {$\substack{\ \cdots \ \\ \ }$};
  \foreach \from/\to in {n1/n2,n2/n3,n3/n4,n4/n5,n5/n6,n6/n7, n7/n8,    n8/n1}
    \draw (\from) -- (\to);
  \node[right,fill=none] at (n6.east) {,};
\end{tikzpicture}
\end{minipage}
besides the graded case, $f=u$, we only need  to consider $f=u+u^2.$
The corresponding \HS sequences, for $3\leq n \leq 13$ are listed
below.

\smallskip
\begin{minipage}{4cm}
\begin{tabular}{rrrrrrrrrr}
1& 3& 2& 1&&&&&\\
1& 4& 7& 3&&&&&\\
1& 5& 14& 10& 1&&&&\\
1& 6& 20& 31& 5&&&&\\
1& 7& 27& 63& 28& 1&&&&\\
1& 8& 35& 96& 106&9&&&&\\
1& 9& 44&138& 243& 75& 1&&&\\
1&10& 54&190&405&346& 17&&&\\
1& 11& 65& 253& 627& 891& 198& 1&&\\
1&12&77&328&921&1620&1103&33&\\
1&13& 90&416&1300&2691&3159&520&1\\
\end{tabular}
\end{minipage}

\smallskip\noindent
Conjectures:
\begin{itemize}
\item[-]
for $n\ge 5$, the $3$rd entry equals $\frac{n^2+n-2}{2}$,
\item[-] for $n\ge 7$,  the $4$th entry equals  $\frac{n^3 +3n^2-16n}{6}$. 
\end{itemize}
\end{example}

\begin{example}
\label{ex:Dn}
For  the graph $D_n=$ \
\begin{minipage}[c]{3.2cm}
\begin{tikzpicture}
  [scale=.4, auto=left, every node/.style={circle, fill=black!75,
    inner sep=0pt, minimum size=.15cm}]
  \node (n1) at (0,0) {};
  \node (n2) at (1,0) {};
  \node[fill=none] (n3) at (3,0) {$\cdots$};
  \node (n4) at (5,0) {};
   \node (n5) at (6,0) {};
      \node (n6) at (7,1) {};
         \node (n7) at (7,-1) {};
  \foreach \from/\to in {n1/n2,n2/n3,n3/n4,n4/n5, n5/n6, n5/n7}
    \draw (\from) -- (\to);
\end{tikzpicture}
\end{minipage}
with $n\geq 4$ vertices
and  $f=u+u^3$
(for $f=u+u^2$ and $f=u+u^2+u^3$ the Hilbert sequences are the same
as for the graph $A_n$, see Example~\ref{ex:An}), we have the
following Hilbert sequences

 \smallskip
 \begin{minipage}[c]{4cm}
  \begin{tabular}{rrrrrrrrrrr}
1& 4& 3& &&&&&&&\\
1& 5& 7& 3&&&&&&&\\
1& 6& 12& 10& 3&&&&&&\\
1& 7& 18& 22& 13& 3&&&&&\\
1& 8& 25& 40& 35& 16& 3&&&&\\
1& 9& 33& 65& 75& 51& 19& 3&&&\\
1& 10& 42& 98& 140& 126& 70& 22& 3&\\
1& 11& 52& 140& 238& 266& 196& 92& 25& 3\\
\end{tabular}
 \end{minipage}

\smallskip\noindent
Conjecture. These numbers satisfy a Pascal-type recursion relation.
\end{example}

\begin{example}
\label{ex:An3}
For the graph $G=$
 \
\begin{minipage}{3cm}
\begin{tikzpicture}  [scale=.4, auto=left, every node/.style={circle, fill=black!75,
inner sep=0pt, minimum size=.15cm}]
  \node (n1) at (0,0) {};
  \node (n2) at (1,0) {};
 \node[fill=none] (n3) at (3,0) {$\cdots$};
 \node (n4) at (5,0) {};
 \node (n5) at (6,0) {};
 \node (n6) at (7,0) {};
 \node (n8) at (5,1) {};
  \foreach \from/\to in {n1/n2,n2/n3,n3/n4,n4/n5, n5/n6, n4/n8}
    \draw (\from) -- (\to);
\end{tikzpicture}
\end{minipage}
\ with $n\ge 5$ vertices and $f=u+u^2$
(for $f=u+u^3$ the Hilbert sequences are the same as for the graph
$D_n$ from Example~\ref{ex:Dn}), we have the following Hilbert sequences

\smallskip
 \begin{tabular}{rrrrrrr}
1& 5& 10&&&&\\
1& 6& 15& 10&&&\\
1& 7& 22& 34&&&\\
1& 8& 30& 59& 30&&\\
1& 9& 39& 95& 112&\\
1& 10& 49& 141& 221& 90\\
1& 11& 60& 198& 388& 366\\
\end{tabular}
\end{example}

\begin{example}
\label{ex:An4} For $G=$
\begin{minipage}{3.5cm}
\begin{tikzpicture}
[scale=.4,auto=left,every node/.style={circle, fill=black!75, inner sep=0pt, minimum
  size=.15cm}]
  \node (n1) at (0,0) {};
  \node (n2) at (1,0) {};
\node[fill=none] (n3) at (3,0) {$\cdots$};
  \node (n4) at (5,0) {};
   \node (n5) at (6,0) {};
    \node (n6) at (7,0) {};
    \node (n7) at (8,0) {};
      \node (n8) at (5,1) {};
  \foreach \from/\to in {n1/n2,n2/n3,n3/n4,n4/n5, n5/n6, n6/n7, n4/n8}
    \draw (\from) -- (\to);
\end{tikzpicture}
\end{minipage}
($A_{n-1}$ with a leg on the 4th place), with $n\geq 7$ vertices
and $f=u+u^2$
 (for $f=u+u^3$ and $f=u+u^2+u^3$,  the results are the same as for
 the graphs $D_n$
 from Example~\ref{ex:Dn} and $A_n$, from Example~\ref{ex:An},
 respectively), we have the Hilbert sequences

\smallskip
\begin{minipage}{3cm}
 \begin{tabular}{rrrrrrr}
1& 7& 22& 34&&\\
1& 8& 30& 62& 27&\\
1& 9& 39& 96& 111&\\
1& 10& 49& 142& 229& 81\\
1& 11& 60& 199& 393& 360\\
\end{tabular}
\end{minipage}
\end{example}

\begin{example}
For  $G=$
\begin{minipage}{4cm}
\begin{tikzpicture}
[scale=.4,auto=left,every node/.style={circle,fill=black!75, inner sep=0pt, minimum
  size=.15cm}]
  \node (n1) at (0,0) {};
  \node (n2) at (1,0) {};
  \node[fill=none] (n3) at (3,0) {$\cdots$};
  \node (n4) at (5,0) {};
   \node (n5) at (6,0) {};
    \node (n6) at (7,0) {};
    \node (n7) at (8,0) {};
          \node (n8) at (9,0) {};
      \node (n9) at (5,1) {};
  \foreach \from/\to in {n1/n2,n2/n3,n3/n4,n4/n5, n5/n6, n6/n7, n7/n8, n4/n9}
    \draw (\from) -- (\to);
\end{tikzpicture}
\end{minipage}
($A_{n-1}$ with a leg on $5$th place) with $n\geq 6$ vertices
and $f=u+u^2$ (for $f=u+u^3$ and $f(u)=u+u^2+u^3$, the results are the
same as for graphs $D_n$ and $A_n$, respectively), we have the Hilbert 
sequences

\smallskip
\begin{minipage}{3cm}
\begin{tabular}{rrrrrrr}
1& 6& 16& 9&&&\\
1& 7& 23& 33&&&\\
1& 8& 30& 59& 30&&\\
1& 9& 39& 96& 111&  \\
1& 10& 49& 140& 222& 90\\
1& 11& 60& 197& 392&363\\
\end{tabular}
\end{minipage}
 \hfill
\end{example}

\begin{example}
For graph  $\widehat{D}_n=$
\
\begin{minipage}{3.5cm}
\begin{tikzpicture}
[scale=.4,auto=left,every node/.style={circle,fill=black!75,
    inner sep=0pt, minimum size=.15cm}]
  \node (n8) at (-1,-1) {};
  \node (n9) at (-1,1) {};
  \node (n1) at (0,0) {};
  \node (n2) at (1,0) {};
  \node[fill=none] (n3) at (3,0) {$\ \cdots$};
  \node (n4) at (5,0) {};
  \node (n5) at (6,0) {};
  \node (n6) at (7,1) {};
  \node (n7) at (7,-1) {};
\foreach \from/\to in {n1/n2,n2/n3,n3/n4,n4/n5, n5/n6, n5/n7, n1/n8, n1/n9}
    \draw (\from) -- (\to);
\end{tikzpicture}
\end{minipage}
\
and $f=u+u^3$  (for $f=u+u^2$ and $f=u+u^2+u^3$,
the results are the same as for $A_n$), we have the Hilbert sequences

\smallskip
 \begin{tabular}{rrrrrrrrrrr}
1& 5& 7& 3&&&&&&\\
1& 6& 14& 10& 1&&&&&\\
1& 7& 22& 25& 9&&&&&\\
1& 8& 30& 47& 33& 9&&&&\\
1& 9& 39& 77& 79& 42& 9&&&\\
1& 10& 49& 116& 155& 121& 51& 9&&\\
1& 11& 60& 165& 270& 276& 172& 60& 9&\\
1& 12& 72& 225& 434& 546& 448& 232& 69& 9\\
\end{tabular}

 \smallskip\noindent
Conjecture. These numbers exhibit a Pascal-type behavior.
\end{example}

\begin{example}
For $G=$
\
\begin{minipage}{3.6cm}
\begin{tikzpicture}
[scale=.4,auto=left,every node/.style={circle,fill=black!75, inner sep=0pt, minimum
  size=.15cm}]
  \node (n0) at (-1,0) {};
  \node (n1) at (0,0) {};
  \node (n2) at (1,0) {};
\node[fill=none] (n3) at (3,0) {$\cdots$};
  \node (n4) at (5,0) {};
   \node (n5) at (6,0) {};
      \node (n6) at (7,1) {};
      \node (n7) at (7,-1) {};
       \node (n8) at (1,1) {};
\foreach \from/\to in {n0/n1, n1/n2,n2/n3,n3/n4,n4/n5, n5/n6, n5/n7,n2/n8}
    \draw (\from) -- (\to);
\end{tikzpicture}
\end{minipage}
and $f=u+u^3$ (for $f=u+u^2$ and $f=u+u^2+u^3$ the answers are the
same as in Example~\ref{ex:An3} and Example~\ref{ex:An},
respectively), we have the Hilbert sequences

\smallskip
 \begin{tabular}{rrrrrrrr}
1& 7& 20& 24& 11& 1&\\
1& 8& 29& 47& 34& 9&\\
1& 9& 38& 77& 80& 42& 9\\
\end{tabular}

\end{example}

\begin{example}
For $G=$ \
\begin{minipage}{3.2cm}
\begin{tikzpicture}
[scale=.4,auto=left,every node/.style={circle,fill=black!75, inner sep=0pt,
  minimum size=.15cm}]
  \node (n1) at (0,0) {};
  \node (n2) at (1,0) {};
\node[fill=none] (n3) at (3,0) {$\cdots$};
  \node (n4) at (5,0) {};
   \node (n5) at (6,0) {};
      \node (n6) at (7,1) {};
      \node (n7) at (7,-1) {};
       \node (n8) at (5,1) {};
  \foreach \from/\to in {n1/n2,n2/n3,n3/n4,n4/n5, n5/n6, n5/n7,n4/n8}
    \draw (\from) -- (\to);
\end{tikzpicture}
\end{minipage}
\
and
 $f=u+u^3$  (for $f=u+u^2$ and $f=u+u^2+u^3$ the results
 are the same as in Example~\ref{ex:An4} and Example~\ref{ex:An},
 respectively), we have the Hilbert sequences
\smallskip

\begin{minipage}{4cm}
 \begin{tabular}{rrrrrrrrrr}
1& 5& 7& 3&&&&&\\
1& 6& 14& 10& 1&&&&\\
1& 7& 20& 24& 11& 1&&&\\
1& 8& 27& 44& 35& 12& 1&&\\
1& 9& 35& 71& 79& 47& 13& 1&\\
1& 10& 44& 106& 150& 126& 60& 14& 1\\
\end{tabular}
\end{minipage}

\smallskip\noindent
Conjecture: Pascal-type behavior except for the entry $14$.
\end{example}

\begin{example}
For $G=$
\
\begin{minipage}{4cm}
\begin{tikzpicture}
[scale=.4,auto=left,every node/.style={circle,fill=black!75, inner sep=0pt, minimum
  size=.15cm}]
  \node (n1) at (0,0) {};
  \node (n2) at (1,0) {};
\node[fill=none] (n3) at (3,0) {$\cdots$};
  \node (n4) at (5,0) {};
  \node (n5) at (5,1) {};
   \node (n6) at (6,0) {};
      \node (n7) at (7,0) {};
      \node (n8) at (8,-1) {};
      \node (n9) at (8,1) {};
  \foreach \from/\to in {n1/n2,n2/n3,n3/n4,n4/n5, n4/n6, n6/n7, n7/n8, n7/n9}
    \draw (\from) -- (\to);
\end{tikzpicture}
\end{minipage}
and $f=u+u^3$, we have the Hilbert sequences
\smallskip

\begin{minipage}{4cm}
 \begin{tabular}{rrrrrrrrr}
1& 7& 22& 25& 9&&&\\
1& 8& 29& 47& 34& 9&&\\
1&9& 37& 76& 81& 43& 9&\\
1& 10& 46& 113& 157& 124& 52& 9\\
\end{tabular}
\end{minipage}

\smallskip\noindent
Conjecture: A Pascal-type behavior.
\end{example}

\begin{example}
For  $G=$
\
\begin{minipage}{4.1cm}
\begin{tikzpicture}
  [scale=.4,auto=left,every node/.style={circle,fill=black!75,
    inner sep=0pt, minimum size=.15cm}]
  \node (n1) at (0,0) {};
  \node (n2) at (1,0) {};
\node[fill=none] (n3) at (3,0) {$\cdots$};
  \node (n4) at (5,0) {};
  \node (n5) at (5,1) {};
   \node (n6) at (6,0) {};
      \node (n7) at (7,0) {};
      \node (n8) at (8,0) {};
     \node (n9) at (9,1) {};
      \node (n10) at (9,-1) {};
  \foreach \from/\to in {n1/n2,n2/n3,n3/n4,n4/n5, n4/n6, n6/n7, n7/n8, n8/n9, n8/n10}
    \draw (\from) -- (\to);
\end{tikzpicture}
\end{minipage}
 and $f=u+u^3$, we have the Hilbert sequences
\smallskip

\begin{minipage}{4cm}
 \begin{tabular}{rrrrrrrrr}
1& 8& 30& 47& 33& 9&&\\
1& 9& 38& 77& 80& 42& 9,&\\
1& 10& 47& 115& 157& 122& 51& 9\\
\end{tabular}
\end{minipage}

\smallskip\noindent
Conjecture: A Pascal-type behavior.
\end{example}

\subsection{Trees of maximal degree at most three }

\subsubsection{$4$ vertices}

The homogeneous (i.e. corresponding to $f(u)=u$) \HS sequence is $(1,3,3,1)$.
\renewcommand{\labelitemi}{$\diamond$}
\begin{itemize}
\item
\begin{tikzpicture}
[scale=.4,auto=left,every node/.style={circle,fill=black!75,inner sep=0pt, minimum size=.15cm}]
  \node (n1) at (0,0) {};
  \node (n2) at (1,0) {};
  \node (n3) at (2,0) {};
  \node (n4) at (3,0) {};
  \foreach \from/\to in {n1/n2,n2/n3,n3/n4}
    \draw (\from) -- (\to);
\end{tikzpicture}
has the \HS sequence  $(1,4,3)$ 
 for all $f(u) = u + au^2$ with $a \neq 0$.
\item
\begin{tikzpicture}
[scale=.4,auto=left,every node/.style={circle,fill=black!75,inner sep=0pt, minimum size=.15cm}]
  \node (n1) at (0,0) {};
  \node (n2) at (1,0) {};
  \node (n3) at (1, 1) {};
  \node (n4) at (2,0) {};
  \foreach \from/\to in {n1/n2,n2/n3,n2/n4}
    \draw (\from) -- (\to);
\end{tikzpicture}
has the \HS sequence $(1,4,3)$ 
   for all $f(u) = u + au^2 + bu^3$ unless $a = b = 0$.
\end{itemize}

\subsubsection{$5$ vertices}

The homogeneous \HS sequence  is  $(1,4,6,4,1)$. 
\begin{itemize}
\item
\begin{tikzpicture}
[scale=.4,auto=left,every node/.style={circle,fill=black!75,inner sep=0pt, minimum size=.15cm}]
  \node (n1) at (0,0) {};
  \node (n2) at (1,0) {};
  \node (n3) at (2,0) {};
  \node (n4) at (3,0) {};
  \node (n5) at (4,0) {};
  \foreach \from/\to in {n1/n2,n2/n3,n3/n4,n4/n5}
    \draw (\from) -- (\to);
\end{tikzpicture}
has the \HS sequence $(1,5,10)$  
 for all $f(u) =  u+ au^2$ with $a \neq 0$.
\item
\begin{tikzpicture}
[scale=.4,auto=left,every node/.style={circle,fill=black!75,inner sep=0pt, minimum size=.15cm}]
  \node (n1) at (0,0) {};
  \node (n2) at (1,0) {};
  \node (n3) at (2, 0) {};
  \node (n4) at (2,1) {};
   \node (n5) at (3,0) {};
  \foreach \from/\to in {n1/n2,n2/n3,n3/n4,n3/n5}
    \draw (\from) -- (\to);
\end{tikzpicture}
has the \HS sequence $(1,5,10)$  
  for all $f(u) = u + au^2 + bu^3$ unless $a = 0$. For $f(u) = u + b u^3$ with  $b\neq 0,$
the \HS  sequence  is $(1,5,7,3)$.

\end{itemize}

\subsubsection{$6$ vertices}

The homogeneous \HS sequence  is $(1,5,10,10,5,1)$. 
Surprisingly all 4 non-isomorphic trees on $6$ vertices have the  same
general \HS sequence $(1,6,16, 9)$ 
 for $f(u) = u + au^2 + bu^3$ with $a, b \neq 0$.

\begin{itemize}
\item
\begin{tikzpicture}
[scale=.4,auto=left,every node/.style={circle,fill=black!75,inner sep=0pt, minimum size=.15cm}]
  \node (n1) at (0,0) {};
  \node (n2) at (1,0) {};
  \node (n3) at (2,0) {};
  \node (n4) at (3,0) {};
  \node (n5) at (4,0) {};
  \node (n6) at (5,0) {};
  \foreach \from/\to in {n1/n2,n2/n3,n3/n4,n4/n5,n5/n6}
    \draw (\from) -- (\to);
\end{tikzpicture}
has the \HS sequence $(1,6,16,9)$
   for all $f(u) = u + au^2$ with $a \neq 0$.
\item
\begin{minipage}[c]{2.2cm}
\begin{tikzpicture}
[scale=.4,auto=left,every node/.style={circle,fill=black!75,inner sep=0pt, minimum size=.15cm}]
  \node (n1) at (0,1) {};
  \node (n2) at (1,1) {};
  \node (n3) at (2,1) {};
  \node (n4) at (3,1) {};
  \node (n5) at (4,0) {};
  \node (n6) at (4,2) {};
  \foreach \from/\to in {n1/n2,n2/n3,n3/n4,n4/n5,n4/n6}
    \draw (\from) -- (\to);
\end{tikzpicture}
\end{minipage}
has special value at $a = 0$; the family $f(u) = u + bu^3,\; b\neq 0$ gives the \HS sequence $(1,6,12,10,3)$.
\item
\begin{minipage}[c]{2.2cm}
\begin{tikzpicture}
[scale=.4,auto=left,every node/.style={circle,fill=black!75,inner sep=0pt, minimum size=.15cm}]
  \node (n1) at (0,0) {};
  \node (n2) at (0,2) {};
  \node (n3) at (1,1) {};
  \node (n4) at (2,1) {};
  \node (n5) at (3,0) {};
  \node (n6) at (3,2) {};
  \foreach \from/\to in {n1/n3,n2/n3,n3/n4,n4/n5,n4/n6}
    \draw (\from) -- (\to);
\end{tikzpicture}
\end{minipage}
also has special value at $a = 0$: the family $f(u) = u + bu^3,\; b\neq 0$ gives the \HS sequence  $(1,6,14,10,1)$.
\item
\begin{tikzpicture}
[scale=.4,auto=left,every node/.style={circle,fill=black!75,inner sep=0pt, minimum size=.15cm}]
  \node (n1) at (0,0) {};
  \node (n2) at (1,0) {};
  \node (n3) at (2,0) {};
  \node (n4) at (2,1) {};
  \node (n5) at (3,0) {};
  \node (n6) at (4,0) {};
  \foreach \from/\to in {n1/n2,n2/n3,n3/n4,n3/n5,n5/n6}
    \draw (\from) -- (\to);
\end{tikzpicture}
has two special values: $a = 0, b\neq 0$ gives  the \HS sequence $(1,6,12,10,3)$ 
  while $a\neq 0, b = 0$ gives $(1,6,15,10)$. 
\end{itemize}

\subsubsection{$7$ vertices}

The homogeneous \HS  sequence  is $(1,6,15,20,15,6,1)$. The 
 general \HS  sequence is no longer shared by all trees with $7$
 vertices.

\begin{itemize}
\item
\begin{tikzpicture}
[scale=.4,auto=left,every node/.style={circle,fill=black!75,inner sep=0pt, minimum size=.15cm}]
  \node (n0) at (-1, 0) {};
  \node (n1) at (0,0) {};
  \node (n2) at (1,0) {};
  \node (n3) at (2,0) {};
  \node (n4) at (3,0) {};
  \node (n5) at (4,0) {};
  \node (n6) at (5,0) {};
  \foreach \from/\to in {n0/n1, n1/n2,n2/n3,n3/n4,n4/n5,n5/n6}
    \draw (\from) -- (\to);
\end{tikzpicture}
has the \HS  sequence $(1,7,23,33)$  
 for all $f(u) = u + au^2$ with $a \neq 0$.
\item
\begin{minipage}[c]{2.5cm}
\begin{tikzpicture}
[scale=.4,auto=left,every node/.style={circle,fill=black!75,inner sep=0pt, minimum size=.15cm}]
  \node (n0) at (-1,1) {};
  \node (n1) at (0,1) {};
  \node (n2) at (1,1) {};
  \node (n3) at (2,1) {};
  \node (n4) at (3,1) {};
  \node (n5) at (4,0) {};
  \node (n6) at (4,2) {};
  \foreach \from/\to in {n0/n1,n1/n2,n2/n3,n3/n4,n4/n5,n4/n6}
    \draw (\from) -- (\to);
\end{tikzpicture}
\end{minipage}
has the general 
\HS  sequence $(1,7,23,33)$ 
 and one special value $a=0$ which for
$f(u) = u + bu^3,\; b\neq 0$ gives the \HS sequence $(1,7,22, 25, 9)$.
\item
\begin{minipage}[c]{2.5cm}
\begin{tikzpicture}
[scale=.4,auto=left,every node/.style={circle,fill=black!75,inner sep=0pt, minimum size=.15cm}]
  \node (n1) at (0,0) {};
  \node (n2) at (0,2) {};
  \node (n3) at (1,1) {};
  \node (n4) at (2,1) {};
  \node (n5) at (3,1) {};
  \node (n6) at (4,0) {};
  \node (n7) at (4,2) {};
  \foreach \from/\to in {n1/n3,n2/n3,n3/n4,n4/n5,n5/n6, n5/n7}
    \draw (\from) -- (\to);
\end{tikzpicture}
\end{minipage}
has the
general
\HS  sequence $(1,7,23,33)$  
 and one special value $a=0$ which for
$f(u) = u + bu^3, b\neq 0$ gives the \HS  sequence $(1,7,22, 25, 9)$. 
\item
\begin{tikzpicture}
[scale=.4,auto=left,every node/.style={circle,fill=black!75,inner sep=0pt, minimum size=.15cm}]
  \node (n1) at (0,0) {};
  \node (n2) at (1,0) {};
  \node (n3) at (2,0) {};
  \node (n4) at (2,1) {};
  \node (n5) at (3,0) {};
  \node (n6) at (4,0) {};
   \node (n7) at (2,2){};
  \foreach \from/\to in {n1/n2,n2/n3,n3/n4,n3/n5,n5/n6,n4/n7}
    \draw (\from) -- (\to);
\end{tikzpicture}
has a different  general
\HS  sequence $(1,7,22,34)$ 
  and at least two special values $a=0$ and $b=0$. In particular,
for $f(u) = u + u^2$,  the \HS sequence  equals $(1,7,21,35)$ 
 and for $f(u) = u + u^3, $ the \HS sequence equals $(1,7,18,22,13,3)$.  
\item
\begin{minipage}[c]{2.5cm}
\begin{tikzpicture}
[scale=.4,auto=left,every node/.style={circle,fill=black!75,inner sep=0pt, minimum size=.15cm}]
  \node (n0) at (-1,1) {};
  \node (n1) at (0,1) {};
  \node (n2) at (1,1) {};
  \node (n3) at (1,0) {};
  \node (n4) at (2,1) {};
  \node (n5) at (3,0) {};
  \node (n6) at (3,2) {};
  \foreach \from/\to in {n0/n1,n1/n2,n2/n3,n2/n4,n4/n5,n4/n6}
    \draw (\from) -- (\to);
\end{tikzpicture}
\end{minipage}
has the
 general
\HS  sequence $(1,7,23,33)$ 
  and at least two special values $a=0$ and $b=0$. In particular,
for $f(u) = u + u^2$, the \HS sequence equals $(1,7,22,34)$ 
 and for $f(u) = u + u^3$,  the \HS sequence equals $(1,7,20,24,11,1)$. 
\item
\begin{tikzpicture}
[scale=.4,auto=left,every node/.style={circle,fill=black!75,inner sep=0pt, minimum size=.15cm}]
  \node (n0) at (-1,0) {};
  \node (n1) at (0,0) {};
  \node (n2) at (1,0) {};
  \node (n3) at (1,1) {};
  \node (n4) at (2,0) {};
  \node (n5) at (3,0) {};
  \node (n6) at (4,0) {};
  \foreach \from/\to in {n0/n1,n1/n2,n2/n3,n2/n4,n4/n5,n5/n6}
    \draw (\from) -- (\to);
\end{tikzpicture}
has the
 general
\HS  sequence  $(1,7,23,33)$ 
  and at least two special values $a=0$ and $b=0$.
For $f(u) = u + u^2$ the \HS sequence equals $(1,7,22,34)$ 
 and for $f(u) = u + u^3$ the \HS sequence equals $(1,7,18,22,13,3)$. 
\end{itemize}

\noindent
CAUTION: Parametric Gr{\" o}bner bases were not computed which means that
there could be missing additional special values of parameters which we have not found.

\subsubsection{$8$ vertices}

The homogeneous \HS  sequence is $(1,7,21,35,35,21,7,1)$.

\begin{itemize}
\item
\begin{tikzpicture}
[scale=.4,auto=left,every node/.style={circle,fill=black!75,inner sep=0pt, minimum size=.15cm}]
  \node (n0) at (-1, 0) {};
  \node (n1) at (0,0) {};
  \node (n2) at (1,0) {};
  \node (n3) at (2,0) {};
  \node (n4) at (3,0) {};
  \node (n5) at (4,0) {};
  \node (n6) at (5,0) {};
   \node (n7) at (6,0) {};
  \foreach \from/\to in {n0/n1, n1/n2,n2/n3,n3/n4,n4/n5,n5/n6,n6/n7}
    \draw (\from) -- (\to);
\end{tikzpicture}
For $f(u)=u + au^2,\, a\neq 0$ the \HS sequence equals $(1,8,31,61,27)$.
\item
\begin{minipage}[c]{2.7cm}
\begin{tikzpicture}
[scale=.4,auto=left,every node/.style={circle,fill=black!75,inner sep=0pt, minimum size=.15cm}]
  \node (n0) at (-1, 0) {};
  \node (n1) at (0,0) {};
  \node (n2) at (1,0) {};
  \node (n3) at (2,0) {};
  \node (n4) at (3,0) {};
  \node (n5) at (4,0) {};
  \node (n6) at (5,-1) {};
   \node (n7) at (5,1) {};
  \foreach \from/\to in {n0/n1, n1/n2,n2/n3,n3/n4,n4/n5,n5/n6,n5/n7}
    \draw (\from) -- (\to);
\end{tikzpicture}
\end{minipage}
For $f(u)=u + u^2$ and $f(u)=u + u^2 + u^3$,  the \HS sequence equals $(1,8,31,61,27)$. 
For $f(u)=u + u^3$,  it  equals $(1, 8, 25, 40,$ $ 35, 16, 3)$. 
\item
\begin{minipage}[c]{2.5cm}
\begin{tikzpicture}
[scale=.4,auto=left,every node/.style={circle,fill=black!75,inner sep=0pt, minimum size=.15cm}]
  \node (n0) at (-1, 0) {};
  \node (n1) at (0,0) {};
  \node (n2) at (1,0) {};
  \node (n3) at (2,0) {};
  \node (n4) at (2,-1) {};
  \node (n5) at (3,0) {};
  \node (n6) at (4,-1) {};
   \node (n7) at (4,1) {};
  \foreach \from/\to in {n0/n1, n1/n2,n2/n3,n3/n4,n3/n5,n5/n6,n5/n7}
    \draw (\from) -- (\to);
\end{tikzpicture}
\end{minipage}
For $f(u)=u + u^2$, the \HS sequence equals $(1,8,30,62,27)$;  
  for $f(u)=u + u^2 + u^3$, it equals $(1,8,31,61, 27)$, 
   and for $f(u)=u + u^3$, it equals $(1,8,27,44,35,12,1)$. 

\item
\begin{minipage}[c]{2.5cm}
\begin{tikzpicture}
[scale=.4,auto=left,every node/.style={circle,fill=black!75,inner sep=0pt, minimum size=.15cm}]
  \node (n0) at (-1, 0) {};
  \node (n1) at (0,0) {};
  \node (n2) at (1,0) {};
  \node (n3) at (1,-1) {};
  \node (n4) at (2,0) {};
  \node (n5) at (3,0) {};
  \node (n6) at (4,-1) {};
   \node (n7) at (4,1) {};
  \foreach \from/\to in {n0/n1, n1/n2,n2/n3,n2/n4,n4/n5,n5/n6,n5/n7}
    \draw (\from) -- (\to);
\end{tikzpicture}
\end{minipage}
For $f(u)=u + u^2$, the \HS sequence equals $(1,8,30, 59,30)$; 
  for $f(u)=u + u^2 + u^3$, it equals $(1,8,31,61,27)$, 
   and  for $f(u)=u + u^3$, it equals $(1,8,29,47,34,9)$.  

\item
\begin{minipage}[c]{2.5cm}
\begin{tikzpicture}
[scale=.4,auto=left,every node/.style={circle,fill=black!75,inner sep=0pt, minimum size=.15cm}]
  \node (n0) at (0, -1) {};
  \node (n1) at (0,1) {};
  \node (n2) at (1,0) {};
  \node (n3) at (2,0) {};
  \node (n4) at (3,0) {};
  \node (n5) at (4,0) {};
  \node (n6) at (5,-1) {};
   \node (n7) at (5,1) {};
  \foreach \from/\to in {n0/n2, n1/n2,n2/n3,n3/n4,n4/n5,n5/n6,n5/n7}
    \draw (\from) -- (\to);
\end{tikzpicture}
\end{minipage}
For $f(u)=u + u^2$ and $f(u)=u + u^2 + u^3$, the \HS sequence  equals $(1,8,31,61,27)$;  
  for $f(u)=u + u^3$, it equals   $(1,8,30, 47, 33, 9)$. 

\item
\begin{minipage}[c]{2.5cm}
\begin{tikzpicture}
[scale=.4,auto=left,every node/.style={circle,fill=black!75,inner sep=0pt, minimum size=.15cm}]
  \node (n0) at (0, -1) {};
  \node (n1) at (0,1) {};
  \node (n2) at (1,0) {};
  \node (n3) at (2,0) {};
  \node (n4) at (2,-1) {};
  \node (n5) at (3,0) {};
  \node (n6) at (4,-1) {};
   \node (n7) at (4,1) {};
  \foreach \from/\to in {n0/n2, n1/n2,n2/n3,n3/n4,n3/n5,n5/n6,n5/n7}
    \draw (\from) -- (\to);
\end{tikzpicture}
\end{minipage}
For $f(u)=u + u^2$,  the \HS sequence equals $(1,8,30,62,27)$; 
  for $f(u)=u + u^2 + u^3$, it equals  $(1,8,31,61,27)$, 
  and  for $f(u)=u + u^3$, it equals $(1,8,29,51,34,5)$. 
\item
\begin{minipage}[c]{2.8cm}
\ \\[7pt]
\begin{tikzpicture}
[scale=.4,auto=left,every node/.style={circle,fill=black!75,inner sep=0pt, minimum size=.15cm}]
  \node (n0) at (-1, 0) {};
  \node (n1) at (0,0) {};
  \node (n2) at (1,0) {};
  \node (n3) at (2,0) {};
  \node (n4) at (3,0) {};
  \node (n5) at (3,-1) {};
  \node (n6) at (4,0) {};
   \node (n7) at (5,0) {};
  \foreach \from/\to in {n0/n1, n1/n2,n2/n3,n3/n4,n4/n5,n4/n6,n6/n7}
    \draw (\from) -- (\to);
\end{tikzpicture}
\end{minipage}
For $f(u)=u + u^2$, the \HS sequence equals  $(1,8,30, 59,30)$; 
  for $f(u)=u + u^2 + u^3$, it equals $(1,8,31,61,27)$, 
  and for $f(u)=u + u^3$, it equals $(1,8, 25, 40, 35,16,3)$.

\item
\begin{minipage}[c]{2.8cm}
\ \\[10pt]
\begin{tikzpicture}
[scale=.4,auto=left,every node/.style={circle,fill=black!75,inner sep=0pt, minimum size=.15cm}]
  \node (n1) at (0,0) {};
  \node (n2) at (1,0) {};
  \node (n3) at (2,0) {};
  \node (n4) at (3,0) {};
  \node (n5) at (3,-1) {};
  \node (n6) at (4,0) {};
   \node (n7) at (5,0) {};
  \node (n8) at (6,0){};
  \foreach \from/\to in {n1/n2,n2/n3,n3/n4,n4/n5,n4/n6,n6/n7, n7/n8}
    \draw (\from) -- (\to);
\end{tikzpicture}
\end{minipage}
For $f(u)=u + u^2$,  the \HS sequence equals $(1,8,30,62,27)$; 
  for $f(u)=u + u^2 + u^3$, it equals $(1,8,31,61,27)$, 
   and for $f(u)=u+u^3$, it equals  $(1,8,25,40,35,16,3)$.  
\item
\begin{minipage}[c]{2.8cm}
\ \\[10pt]
\begin{tikzpicture}
[scale=.4,auto=left,every node/.style={circle,fill=black!75,inner sep=0pt, minimum size=.15cm}]
  \node (n1) at (0,0) {};
  \node (n2) at (1,0) {};
  \node (n3) at (2,0) {};
  \node (n4) at (3,0) {};
  \node (n5) at (3,-1) {};
  \node (n6) at (3,-2) {};
   \node (n7) at (4,0) {};
  \node (n8) at (5,0){};
  \foreach \from/\to in {n1/n2,n2/n3,n3/n4,n4/n5,n5/n6,n4/n7, n7/n8}
    \draw (\from) -- (\to);
\end{tikzpicture}
\end{minipage}
For $f(u)=u + u^2$,  the \HS sequence equals $(1,8,29,60,30)$; 
 for $f(u)=u + u^2 + u^3$, it equals $(1,8,30,62,27)$,  
    and  for $f(u)=u + u^3$, it equals $(1,8,25, 40, 35,16,3)$.
\end{itemize}

\noindent
CAUTION: Parametric Gr{\" o}bner bases were not computed which means that
there could be missing additional special values of parameters which we have not found.


\subsection{Sporadic examples}

Here we present the \HS  sequences for several special graphs.

\smallskip
(A) We start with the complete graph $K_5$.

\begin{minipage}{7cm}
\begin{tabular}{l| rrrrrrrrrrr}
f(u) & 0 & 1 & 2 &3 & 4 & 5 & 6 & 7 & 8 & 9 & 10 \\ \hline
$u$ & 1 & 4 & 10 & 20 & 35 & 51 & 64 & 60 & 35 & 10 & 1 \\
$u + u^2$ & 1 & 5 & 14 & 30 & 55 & 80 & 77 & 15 & 9 & 4 & 1\\
$u + u^3$ & 1 & 5 & 15 & 33 & 60 & 76 & 60 & 27 & 9 & 4 & 1\\
$u + u^4$ & 1 & 5 & 14 & 30 & 53 & 73 & 60 & 41 & 9 & 4 & 1\\
$u + u^2 + u^3$ & 1 & 5 & 15 & 34 & 64 & 90 & 53 & 15 & 9 & 4 & 1\\
$u + u^2 + u^4$ & 1 & 5 & 15 & 35 & 67 & 91 & 48 & 15 & 9 & 4 & 1\\
$u + u^3 + u^4$ & 1 & 5 & 15 & 34 & 63 & 82 & 56 & 21 & 9 & 4 & 1\\
$u + u^2 + u^3 +u^4$ & 1 & 5 & 15 & 35 & 67 & 91 & 48 & 15 & 9 & 4 & 1\\
$u - \frac{u^2}{2} + \frac{u^3}{3} -\frac{u^4}{4}$ & 1 & 5 &15 & 35 & 63 & 84 & 59 & 15 & 9 &4 & 1
\end{tabular}
\end{minipage}

\smallskip
(B) $K_5\setminus e$, where $e$ is any edge.

\begin{tabular}{l| rrrrrrrrrr}
f(u) &${0}$& $1$ & $2$ & $3$ & $4$&$5$&$6$&$7$&$8$&$9$\\ \hline
$u$ & 1 & 4 & 10 & 20 & 33 & 45 & 46 & 29 & 9 & 1 \\
$u + u^2$ & 1 & 5 & 14 & 30 & 53 & 62 & 33 & && \\
$u + u^3$ & 1 & 5 & 15 & 33 & 55 & 59 & 28 & 2 && \\
$u + u^4$ & 1 & 5 & 14 & 30 & 48 & 50 & 37 & 13 &&\\
$u + u^2 + u^3$ & 1 & 5 & 15 & 34 & 62 & 68 & 13 &&&\\
$u + u^2 + u^4$ & 1 & 5 & 15 & 35 & 65 & 64 & 13 & &&\\
$u + u^3 + u^4$ & 1 & 5 & 15 & 35 & 65 & 64 & 13&&& \\
$u + u^2 + u^3 +u^4$ & 1 & 5 & 15 & 35 & 65 & 64 & 13 & &&\\
$u - \frac{u^2}{2} + \frac{u^3}{3} - \frac{u^4}{4}$ & 1 & 5 & 15 & 35 & 60 & 59 & 23 &&&
\end{tabular}

\smallskip

(C) $K_5\setminus (e_1\cup e_2)$, where $e_1$ and $e_2$ are any two disjoint edges of $K_5$.

\smallskip


\begin{tabular}{l| rrrrrrrrr}
f(u) & 0 & 1 & 2 &3 & 4 & 5 & 6 & 7 & 8  \\ \hline
$u$ & 1 & 4 & 10 & 20 & 31 & 35 & 24 & 8 & 1 \\
$u+ u^2$ & 1 & 5 & 14 & 30 & 51 & 26 & 7& &\\
$u + u^3$ & 1 & 5 & 15 & 33 & 45 & 29 & 6 & &\\
$u + u^4$ & 1 & 5 & 13 & 26 & 34 & 33 & 18 & 4 &\\
$u + u^2 + u^3$ & 1 & 5 & 15 & 34 & 56 & 19 & 4 &&\\
$u + u^2 + u^4$ & 1 & 5 & 15 & 34 & 56 & 19 & 4 &&\\
$u + u^3 + u^4$ & 1 & 5 & 15 & 34 & 47 & 28 & 4  &&\\
$u + u^2 + u^3 +u^4$ & 1 & 5 & 15 & 35 & 56 & 18 & 4 &  &\\
$u - \frac{u^2}{2} + \frac{u^3}{3} - \frac{u^4}{4}$ &1 & 5 & 15 & 35 & 51 & 20 & 7 &&
\end{tabular}


\smallskip


(D) A square with parallel double edges, \begin{minipage}{1cm}
\begin{tikzpicture}
  [scale=.5,auto=left,every node/.style={circle,fill=black!75,inner sep=0pt,
    minimum size=.15cm}]
 \node (n1) at (0,0) {};
  \node (n2) at (0,1) {};
  \node (n3) at (1,1) {};
  \node (n4) at (1,0) {};
 \draw[double] (n1) -- (n2){};
  \draw (n2) -- (n3){};
   \draw[double] (n3) -- (n4){};
    \draw (n4) -- (n1){};
\end{tikzpicture}
\end{minipage}

\smallskip

\begin{tabular}{l| rrrrrrr }
f(u) &  $0$& $1$ & $2$ & $3$ & $4$&$5$&$6$\\ \hline
$u$  &		1 & 3 & 6 & 9 & 8 & 4 & 1\\
$u + u^2$  &		1 & 4 & 9 & 15 & 3\\
$u + u^3$   &	 1 & 4 & 10 & 11 & 5 & 1 \\
$u + u^2 + u^3$   & 	1 & 4 & 10 & 15 & 2\\
$u - \frac{u^2}{2} + \frac{u^3}{3}$  & 1 & 4 & 10 & 14 & 3
\end{tabular}



\smallskip

(E) A square with adjacent double edges,
\begin{minipage}{1cm}
\begin{tikzpicture}
  [scale=.5,auto=left,every node/.style={circle,fill=black!75,inner sep=0pt,
    minimum size=.15cm}]
 \node (n1) at (0,0) {};
  \node (n2) at (0,1) {};
  \node (n3) at (1,1) {};
  \node (n4) at (1,0) {};
 \draw[double] (n1) -- (n2){};
  \draw (n2) -- (n3){};
   \draw (n3) -- (n4){};
    \draw[double] (n4) -- (n1){};
\end{tikzpicture}
\end{minipage}

The matroid is the same as in the previous example, so the original, undeformed algebra does not distinguish the two \cite{NSh}.

\smallskip

\begin{tabular}{l| rrrrrrr }
f(u)  &$0$ & $1$ & $2$ & $3$ & $4$&$5$&$6$ \\ \hline
$u$  &		1 & 3 & 6 & 9 & 8 & 4 & 1\\
$u + u^2$ & 1 & 4 &  9 & 15 & 3\\
$u + u^3$ &  1 & 4 & 10 & 12 & 5 \\
$u + u^4$  & 1 & 4 & 8 & 10 & 7 & 2\\
$u + u^2 + u^3$ &   	1 & 4 & 10 & 16 & 1\\
$u + u^2 + u^4$  & 1 & 4 & 10 & 16 & 1\\
$u + u^3 + u^4$ &    1 & 4 & 10 & 13 & 4\\
$u + u^2 + u^3 + u^4$   & 1 & 4 & 10 & 16 & 1\\
$u- \frac{u^2}{2} + \frac{u^3}{3}$ &   1 & 4 & 10 & 16 & 1\\
$u - \frac{u^2}{2} + \frac{u^3}{3} -\frac{u^4}{4}$ & 1 & 4 & 10 & 15 & 2
\end{tabular}



\subsection{Two examples with analysis of relations}

Here we present two small examples for which we can provide complete analysis of their weighted projective spaces  $P\cA_G$ of deformed zonotopal algebras.

\subsubsection{Multigraph $K_3+e$ with a double edge}

\begin{proposition}
 For $G=K_3 + e$,  the space $P\cA_G$ is stratified by the following functions over a field $\K$ of characteristic $0$.

 \smallskip
\begin{tabular}{l| rrrrr }
f(u) &  $0$ & $1$ & $2$ & $3$ & $4$\\ \hline
$u$ & 	$1$ & $2$ & $3$ & $3$ & $1$\\
$u + bu^2 + cu^3$ & $1$ & $3$ & $5$ & $1$ & \\
$*$ &	$1$ & $3$ & $4$ & $2$ &
\end{tabular}


 In the last  row either $b = 0$, or $c = 0$, or $3c = 4b^2$.
\end{proposition}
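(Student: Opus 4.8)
The plan is to turn the statement into one finite linear-algebra computation over the coordinate ring $\K[b,c]$ of the parameter plane. Since $\md_{K_3+e}=3$, Remark~\ref{rem:nilpot} lets me take $f=u+bu^2+cu^3$, so $\cA_G$ is the affine plane with coordinates $(b,c)$ and the graded point $f=u$ is the origin. By Proposition~\ref{prop2} the algebra $\CC^f_G\subset\Phi_G$ is, as an unfiltered algebra, always the $10$-dimensional algebra $\CC_G$ (Theorem~\ref{th:forests}), so the whole content lies in how the \emph{filtration} varies with $(b,c)$. I would fix the vertex flows $X_1,X_2,X_3$ with $X_1+X_2+X_3=0$ and the generators $Y_v=X_v+bX_v^2+cX_v^3$, and record two facts: $\dim\CC^{f,0}_G=1$, and for $f\neq u$ the $Y_1,Y_2,Y_3$ are independent modulo scalars because $\sum_vY_v=b\sum_vX_v^2+c\sum_vX_v^3\neq0$. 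Hence $\hsb^f_G$ begins $(1,3,\dots)$ off the origin, while at the origin $\sum_vX_v=0$ forces the graded sequence $(1,2,3,3,1)$ of the first row. As $\dim\CC^f_G=10$ and the filtration has length at most $\md_G+1=4$, the entire sequence off the origin is pinned down by the single integer $\dim\CC^{f,2}_G$: the value $9$ gives $(1,3,5,1)$ and the value $8$ gives $(1,3,4,2)$.

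\textbf{The generic relation.} Everything thus reduces to computing $\dim\CC^{f,2}_G$, i.e.\ the number of independent linear relations $\sum_i\beta_iY_i+\sum_{i\le j}\gamma_{ij}Y_iY_j=0$ holding in $\Phi_G$. I would expand the six products $Y_iY_j$ as explicit vectors in the $16$-dimensional space $\Phi_G$, split each relation into its homogeneous components in the edge variables $\phi_e$, and so obtain a homogeneous linear system in $(\beta_i,\gamma_{ij})$ with coefficient matrix $M(b,c)$ over $\K[b,c]$. One relation holds identically, namely $\bigl(\sum_vY_v\bigr)^2=\sum_vY_v^2+2\sum_{i<j}Y_iY_j=0$: indeed $\sum_vX_v=0$ kills the linear part of $\sum_vY_v$, the square of its quadratic part $b\sum_vX_v^2$ vanishes by a direct check, and all remaining products drop out for degree reasons. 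This single generic relation yields $\dim\CC^{f,2}_G=9$ and the generic sequence $(1,3,5,1)$, proving the middle row.

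\textbf{The special stratum.} The last row is exactly the locus where a second independent relation appears, i.e.\ where $\operatorname{rank}M(b,c)$ drops by one; by Corollary~\ref{cor constructive} this is a closed stratum, cut out by the vanishing of the maximal minors of $M(b,c)$, which I would compute and factor over $\K[b,c]$. Concretely the extra relations can be produced by hand — one of the shape $\sum_vY_v+\mu\sum_{i<j}Y_iY_j=0$ and one mixing the squares $Y_v^2$ — and matching their degree-$2$, degree-$3$ and degree-$4$ parts against the explicit coefficients ($6c$ in the cubic part of $Y_v$, $12b$ in $Y_v^2$, and $12c-8b^2$, $4b^2-6c$ in the top parts of the mixed products) forces the vanishing of polynomials in $(b,c)$. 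Collecting these identifies $S^{*}_G$ as the union of the two coordinate lines and the quadratic curve of the last row, on all of which the sequence is the common value $(1,3,4,2)$.

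\textbf{Descent and the hard point.} Finally I would invoke the $\K^\star$-action of Lemma~\ref{lm:action}, under which $(b,c)\mapsto(\epsilon b,\epsilon^2c)$; it preserves the Hilbert stratification, so the whole picture descends to the weighted projective line $P\cA_G=\mathbb{P}(1,2)$, on which the generic stratum is open and $S^{*}_G$ becomes the asserted finite set of points. The delicate step — and the one where the defining equations of $S^{*}_G$ are actually extracted — is purely bookkeeping: because the $Y_v$ are inhomogeneous in the $\phi_e$, filtration degree and edge-variable degree do not coincide, so $M(b,c)$ couples several edge-degrees at once, and its minors must be factored with care to separate the genuine strata from spurious coincidences among the degree components.
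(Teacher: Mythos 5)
Your overall framework is sound and runs parallel to the paper's: both arguments reduce the proposition to deciding, point by point in the $(b,c)$-plane, whether there is a quadratic relation among the $Y_v$ beyond the universal one, and your identification and verification of that universal relation $\bigl(\sum_v Y_v\bigr)^2=\bigl(b\sum_v X_v^2+c\sum_v X_v^3\bigr)^2=0$ is correct and in fact cleaner than the paper's (which certifies the corresponding relation $(x_1+x_2+x_3)^2=0$ by Macaulay2). The reduction to the plane, the computation $h_0=1$, $h_1=3$ off the origin, and the use of the $\K^\star$-action to descend to $\mathbb{P}(1,2)$ are all fine. However, there are two genuine gaps. First and most seriously, the actual content of the proposition --- that the special stratum is exactly $\{b=0\}\cup\{c=0\}\cup\{3c=4b^2\}$ --- is never derived: you defer it to "compute and factor the maximal minors of $M(b,c)$," which is a legitimate plan but is not executed, and the few explicit coefficients you do quote ($12c-8b^2$ and $4b^2-6c$) would, if they were the relevant obstructions, cut out $3c=2b^2$ rather than the asserted curve, so as written the argument neither produces nor confirms the equation $3c=4b^2$. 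The paper closes this step with an actual argument: by the $x_1\leftrightarrow x_2$ symmetry any extra quadratic relation can be normalized to the form $a_1(x_1^2+x_2^2)+a_2x_1x_2+a_3(x_1+x_2)=Q(f(x_1)+f(x_2))$ with $Q(T)=T^2+aT$, and the cubic obstruction $(-a\lambda+\tfrac43 a)(x_1+x_2)^3+(\lambda+\tfrac43 a)(x_1^3+x_2^3)$ vanishes only for $\lambda=0$ or $\lambda=4/3$. Some such reduction (or the completed minor computation) is indispensable.

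Second, the step "$\dim\CC^{f,2}_G=8$ gives $(1,3,4,2)$" is not justified. Your degree argument correctly forces $(1,3,5,1)$ when $\dim\CC^{f,2}_G=9$ (since $h_3=0$ would freeze the filtration below dimension $10$), but when $\dim\CC^{f,2}_G=8$ the dimension count alone leaves both $(1,3,4,2)$ and $(1,3,4,1,1)$ open, and you must also rule out a further drop $\dim\CC^{f,2}_G\le 7$ on the special locus. The paper disposes of this by directly computing the Hilbert sequences at the exceptional parameter values and checking they coincide; your proposal needs an analogous verification of $h_3$ on each component of the special stratum before the table can be asserted.
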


\begin{proof}
By Corollary~\ref{cor:relations} our relations have the  form
\[
\begin{cases}
x_1^4 = x_2^4 = x_3^3 = 0,\\
f(x_1) + f(x_2) + f(x_3) = 0,\\
(f(x_1) + f(x_2))^3 = (f(x_1) + f(x_3))^4 =  (f(x_2) + f(x_3))^4 = 0.
\end{cases}
\]
Note that, the first group of equations implies that $f(x_1)^4 = f(x_2)^4 = f(x_3)^3 = 0$,
so the third group of equation is redundant.

When $b \neq 0$ we may substitute $u \mapsto u/b$ to
transform $f(u)$ to the function of the form $f(u) = u + u^2 + \lambda u^3$.
Using $\lambda$ as a variable, we can verify
using Macaulay2 ((\cite{M2})) that $(x_1 + x_2 + x_3)^2$ is always a relation.
Note that since the sums of consecutive entries in any \HS  sequence are increasing,
the sequence is determined if there is no further relation in degree $2$. We want to
show that $\lambda = 0, 4/3$ are the only two exceptional cases where we get an
additional quadratic relation -- this will separate the two rows of the table.  It
can be verified by hand or using  Macaulay2, that \HS  sequences of the exceptional values of
$\lambda$ coincide.

By symmetry (if we had two relations which are switched by interchanging $x_1$ and $x_2$, then we add them),
we must have a quadratic relation of the form
\[
a_1(x_1^2 + x_2^2) + a_2x_1x_2 + a_3(x_1 + x_2) + bx_3(x_1 + x_2) + c_2x_3^2 + c_1x_3 = 0.
\]
By subtracting the existing relation $(x_1 + x_2 + x_3)^2 = 0$ we may assume that $b = 0$.
Since $f(x_3)^2 = x_3^2$ we may now rewrite the relation as
\[
a_1(x_1^2 + x_2^2) + a_2x_1x_2 + a_3(x_1 + x_2) = Q(-f(x_3)),
\]
where $Q$ is a quadratic polynomial such that $Q(0) = 0$. Since, $Q(-f(x_3)) = Q (f(x_1) + f(x_2))$ we need
to find when there is $Q$ such that $\deg Q(f(x_1) + f(x_2)) \leq 2$.
We may assume that $Q(T) = T^2 + aT$, because $Q(T) = T$ cannot work.

One can check that modulo existing relations $x_1^4 = x_2^4 = (f(x_1) + f(x_2))^3 = 0$, 
the polynomial $Q (f(x_1) + f(x_2))$ has degree at most $3$ and its cubic term is
\[
(-a\lambda + 4/3a) (x_1 + x_2)^3 + (\lambda + 4/3a) (x_1^3 + x_2^3).
\]
Both coefficients vanish if and only if
either $\lambda = 4/3, a = -1$ or $a = \lambda = 0$.
\end{proof}

\subsubsection{Graph $K_4$}\label{K4}

\begin{proposition}
For $G=K_4$, the space $\cA_G$  of parameters is stratified by the following functions with $b,c \neq 0$ over a field $\K$ of characteristic $0$.

\begin{table}[h!]
\begin{tabular}{l| rrrrrrr }
f(u)  & $0$ & $1$ & $2$ & $3$ & $4$&$5$&$6$\\ \hline
$u$ & 	$1$ & $3$ & $6$ & $10$ & $11$ & $6$ & $1$\\
$u + bu^2$ & $1$ & $4$ & $9$ & $15$ & $5$ & $3$ & $1$\\
$u + cu^3$ &  $1$ & $4$ &  $9$ & $12$ & $8$ & $3$ & $1$ \\
$u + bu^2 + cu^3$ & $1$ & $4$ & $10$ & $14$ & $5$ & $3$ & $1$
\end{tabular}
\end{table}

Thus we have a linear order of strata  of $\cA_G$  given by $u \prec u + c u^3 \prec u + b u^2 \prec u +b  u^2 + c u^3$.

\end{proposition}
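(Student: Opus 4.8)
The plan is to combine the explicit presentation of $\CC^f_{K_4}$ from Corollary~\ref{cor:relations} with the $\K^\star$-action of Lemma~\ref{lm:action} to reduce to a one-parameter family, and then to pin down the jump loci using Theorem~\ref{thm semi} and Corollary~\ref{cor constructive}. First I would record the defining data: every vertex of $K_4$ has degree $3$, so $\md_{K_4}=3$ and $\cA_{K_4}=\{u+bu^2+cu^3\}\cong\K^2$ with coordinates $(b,c)$. By Corollary~\ref{cor:relations} the relations are $y_v^4=0$ for the four vertices, the relation $\sum_{v}f^{-1}(y_v)=0$ coming from $I=V$ (where $D_V=0$), and the three relations $\bigl(f^{-1}(y_u)+f^{-1}(y_w)\bigr)^{5}=0$ coming from the $2$-subsets (for which $D_I=4$), the complementary $2$-subsets being discarded as in the remark following Corollary~\ref{cor:relations}. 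A quick check that the number of forests of $K_4$ equals $1+6+15+16=38$ confirms that every row of the table must sum to $38$.

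Next I would invoke the action $f(u)\mapsto\epsilon^{-1}f(\epsilon u)$, which scales $b\mapsto\epsilon b$ and $c\mapsto\epsilon^2 c$ and preserves the Hilbert stratification by Lemma~\ref{lm:action}. Hence on $\{b\neq0\}$ I may normalize to $f=u+u^2+\lambda u^3$ with the single invariant $\lambda=c/b^2$, on $\{b=0,\,c\neq0\}$ to $f=u+u^3$, and the origin is $f=u$. It therefore suffices to determine the Hilbert sequence of three families: $f=u$, $f=u+u^3$, and $f=u+u^2+\lambda u^3$ as a function of $\lambda$. For $f=u$ the answer $(1,3,6,10,11,6,1)$ is the forest-enumerating Hilbert series of Theorem~\ref{th:forests}. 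For the two deformed families I would compute the filtration dimensions directly from the relations above, confirming with Macaulay2 exactly as in the preceding $K_3+e$ proposition: this gives $(1,4,9,12,8,3,1)$ for $f=u+u^3$, the generic value $(1,4,10,14,5,3,1)$ for $f=u+u^2+\lambda u^3$, and the jump $(1,4,9,15,5,3,1)$ at $\lambda=0$, i.e.\ $f=u+u^2$.

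Reading off these sequences lexicographically yields $u\prec u+cu^3\prec u+bu^2\prec u+bu^2+cu^3$, since the first discrepancies give $(1,3,\dots)\prec(1,4,9,12,\dots)\prec(1,4,9,15,\dots)\prec(1,4,10,\dots)$. Corollary~\ref{cor constructive} then identifies the closures as the nested chain $\{(0,0)\}\subset\{b=0\}\subset\{bc=0\}\subset\cA_{K_4}$, which matches the four prescribed regions (the origin, the punctured axes, and the open complement) and so completes the description of the stratification and its order.

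The hard part will be proving completeness of the stratification, i.e.\ that no additional special parameter values occur. By the lower semicontinuity of the cumulative functions $H_m$ (Theorem~\ref{thm semi}, Corollary~\ref{cor constructive}), each listed sequence is attained on a locally closed set and the jumps happen on proper closed subloci; the content is to show these subloci are exactly $\{c=0\}$ and $\{b=0\}$ and nothing finer. I would handle this as in the $K_3+e$ argument: exploit the $S_4$-symmetry of $K_4$ to restrict the shape of any hypothetical extra relation in the first degree where the dimension could drop (degree $2$ for the open-versus-axis distinction, degree $3$ for distinguishing the two axes), substitute the expression for $f^{-1}$ whose coefficients are explicit polynomials in $b,c$, and solve for the parameter values that force such a relation to exist. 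The claim is that this condition collapses to $b=0$, respectively $c=0$, with no analogue of the extra value $\lambda=4/3$ that appeared for $K_3+e$; a parametric Gr\"obner basis over $\K[\lambda]$ settles the one-parameter family $u+u^2+\lambda u^3$ unconditionally.
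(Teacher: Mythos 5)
Your overall architecture coincides with the paper's: present $\CC^f_{K_4}$ via Corollary~\ref{cor:relations}, use the $\K^\star$-action of Lemma~\ref{lm:action} to normalize to $u+u^2+\lambda u^3$ on $\{b\neq 0\}$ (with the single invariant $\lambda=c/b^2$) and to $u+u^3$ on the punctured $c$-axis, and then obtain the closure chain $\{(0,0)\}\subset\{b=0\}\subset\{bc=0\}\subset\cA_{K_4}$ from Corollary~\ref{cor constructive}; your dimension check ($38$ forests) and the lexicographic comparison are correct, and discarding the $3$-subset generators is legitimate since modulo the linear relation they reduce to the $y_v^4$. The genuine divergence is in the step that carries the whole content of the proposition, namely that $\lambda=0$ is the \emph{only} special value in the one-parameter family. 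You propose an $S_4$-symmetric hunt for an extra low-degree relation (modelled on the $K_3+e$ argument) or a parametric Gr\"obner basis; the paper instead makes a concrete ideal-theoretic reduction: modulo $(x_1^4,\dots,x_4^4)$ the fifth-power relations for $2$-subsets collapse to the $c$-independent generators $x_i^3x_j^2+x_i^2x_j^3$ (together with $x_i^3x_j^3\equiv 0$), the fourth-power relations for triples then also lose their $c$-dependence, so the entire dependence on $c$ is concentrated in the single linear relation $\sum_i\bigl(x_i+x_i^2+cx_i^3\bigr)=0$. Since the $c$-independent part of the ideal contains no element of order less than $4$, that relation contributes a cubic term exactly when $c\neq 0$, which separates rows two and four of the table with no case analysis of hypothetical quadratic relations. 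Your route would also work, but as written it stops at the assertion that the exceptional locus ``collapses to $b=0$, resp.\ $c=0$, with no analogue of $\lambda=4/3$'' --- that assertion \emph{is} the proposition, so you must either carry out the symmetry-constrained elimination explicitly (as the paper does for $K_3+e$) or actually produce the parametric Gr\"obner basis; note the authors' own caution elsewhere that such bases were not computed, so this step cannot simply be delegated. (Both your sketch and the paper dispose of the $b=0$ axis with a ``similar'' argument, so you are in good company there.)
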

\begin{proof}
We may use substitution $u \mapsto u/b$ to transform
the last equation into $u + u^2 + cu^3$.
Let us represent our algebra as a quotient of $K[x_1, x_2, x_3, x_4]$.

\smallskip
By Corollary~\ref{cor:relations} we have relations of the  form
\[
\begin{cases}
x_i^4 = 0 & \text{ for } i = 1, \ldots, 4\\
(x_i + x_j + x_i^2 + x_j^2 + c(x_i^3 + x_j^3))^5 = 0 & \text{ for } i \neq j\\
(x_i + x_j + x_k + x_i^2 + x_j^2 + x_k^2+ c(x_i^3 + x_j^3 + x_k^3))^4 = 0 & \text{ for } i \neq j \neq k\\
  x_1 + x_2 + x_3 + x_4 + (x_1^2+x_2^2+x_3^2+x_4^2) +
  c(x_1^3 + x_2^3 + x_3^3 & + x_4^3)=0.
\end{cases}
\]
It is then easy to see that
\[
(x_i + x_j + x_i^2 + x_j^2 + c(x_i^3 + x_j^3))^5 \equiv (x_i + x_j + x_i^2 + x_j^2)^5
\mod (x_1^4, x_2^4, x_3^4, x_4^4),
\]
In fact, this relation reduces to $40 x_i^3 x_j^3 + 10 x_i^3x_j^2 + 10 x_i^2 x_j^3
\equiv 0$. After multiplication by $x_i$ we get that $x_i^3x_j^3 \equiv 0
\mod (x_1^4, x_2^4, x_3^4, x_4^4)$.
So the ideal generated by the first two classes of relations is generated by
$x_i^4, x_i^3 x_j^2 + x_i^2 x_j^3$ for $i \neq j$.
It is now easy to verify that
\[
(x_i + x_j + x_k + x_i^2 + x_j^2 + x_k^2+ c(x_i^3 + x_j^3 + x_k^3))^4 \equiv (x_i +
x_j + x_k + x_i^2 + x_j^2 + x_k^2)^4
\]
modulo the ideal generated by the previous relations.
We conclude that the relations are, in fact, less dependent on $c$:
\[
\begin{cases}
x_i^4 = 0 & \text{ for } i = 1, \ldots, 4\\
(x_i + x_j + x_i^2 + x_j^2)^5 = 0 & \text{ for } i \neq j\\
(x_i + x_j + x_k + x_i^2 + x_j^2 + x_k^2)^4 = 0 & \text{ for } i \neq j \neq k\\
  x_1 + x_2 + x_3 + x_4 + (x_1^2+x_2^2+x_3^2+x_4^2) + c(x_1^3 + x_2^3 + x_3^3
  & + x_4^3)=0.
\end{cases}
\]
It is immediate that $c = 0$ gives a quadratic relation.
It is also easy to see that no other value of $c$ can give one.
Namely, since the ideal $I$ generated by the first three groups of relations,
which are independent of $c$, does not contain any element of order less than $4$,
then for any $f \in I,$ the polynomial
$$
f + x_1 + x_2 + x_3 + x_4 + (x_1^2+x_2^2+x_3^2+x_4^2) + c(x_1^3 + x_2^3 + x_3^3 +
x_4^3)
$$
has a cubic term.

\medskip
The case  $f(u) = u + cu^3$ is similar.
\end{proof}

\section {Outlook}\label{sec:out}
Here we present a small sample of  open problems about deformed zonotopal algebras $\CC_G^f$ for
future investigation.

\

Our experiments with Maculay2 show that for many graphs $G$ and functions $f$, the
Hilbert sequence of the algebra $\CC_G^f$ is logarithmically concave.
In an earlier preprint version of our paper we conjectured that this
was true for all graphs $G$ and all non-degenerate functions $f$.
However, in this generality the conjecture does not hold.  A
counterexample is given by the complete graph $G=K_5$ and $f=u+u^4$,
for which the Hilbert sequence is given by 
$\hs_G^f=(1, 5, 14, 30, 53, 73, 60, 41, 9, 4, 1)$ which is not
log-concave, since $9^2=81<41\cdot 4=164$.

\smallskip
However, recently a truly remarkable proof of this fact in the graded
case (i.e., $f=u$) was found in~\cite{EHL}. This circumstance gives hope that the
log-concavity might hold for a larger class of graphs and functions.

\begin{problem}
Find families of graphs $G$ and nondegenerate functions $f\ne u$ for
which the Hilbert sequence $\hs_G^f$ is log-concave. In particular,
prove log-concavity for chain and cycle graphs and $f=u+u^2$.
\end{problem}
\
\begin{problem}
When $G$ is a tree, is it true that the Hilbert stratification of
$\cA_G$ consists of coordinate subspaces?
In particular, does $f=e^u$ give a general  \HS sequence?
\end{problem}
\
\begin{problem} Find  a graph-theoretical interpretation of the
 general  \HS sequence $\hs_G$?
Find  a graph-theoretical interpretation of $\hs^f_{G}$
in  some special cases, for example, for $f=e^u$.
\end{problem}
\
\begin{problem} Is it true that if $\hs^f_{G_1}=\hs^f_{G_2}$ for every
function $f$, then $G_1$ is isomorphic to $G_2$?
  \end{problem}
\
\begin{problem}
Determine $G$ and $f$ for which the
algebra $\CC^f_G$ is Gorenstein/quadratic/Koszul.
\end{problem}
\

When $f=u$, i.e. in the graded case, the Hilbert sequences of $\CC_G$
satisfy the deletion-contraction relation~\eqref{eq:Tutte} which
allows to compute them recursively. However, for $f\ne u$ the
relation~\eqref{eq:Tutte} does not hold.

\begin{problem}
Study the behavior of $\hs^f_G$ under standard graph operations on
graphs, for example under deletions and contractions of edges.
\end{problem}

\end{document}